\documentclass{amsart}

\usepackage{stmaryrd}
\usepackage{amssymb}

\newtheorem{theorem}{Theorem}
\newtheorem*{maintheorem*}{Main Theorem}
\newtheorem{lemma}[theorem]{Lemma}
\newtheorem{proposition}[theorem]{Proposition}
\newtheorem{corollary}[theorem]{Corollary}
\newtheorem{claim}[theorem]{Claim}

\theoremstyle{definition}
\newtheorem{definition}[theorem]{Definition}
\newtheorem{example}[theorem]{Example}

\theoremstyle{remark}
\newtheorem{remark}[theorem]{Remark}

\newcommand{\var}{\mathop{\mathrm{var}}}
\newcommand{\essinf}{\mathop{\mathrm{ess\,inf}}}
\newcommand{\ran}{\mathop{\mathrm{ran}}}

\newcommand{\Proj}[2]{\mathrm{Pr}_{#1\sslash #2}}

\title[A semi-invertible Oseledets Theorem]{A semi-invertible Oseledets Theorem with applications to transfer operator
cocycles}

\author{Gary Froyland}
\email{g.froyland@unsw.edu.au}
\author{Simon Lloyd}
\email{s.lloyd@unsw.edu.au}
\author{Anthony Quas}
\email{aquas@uvic.ca}
\address[Froyland and Lloyd]{School of Mathematics and Statistics,
University of New South Wales, Sydney NSW 2052, Australia}
\address[Quas]{Department of Mathematics and Statistics, University of
Victoria, Victoria BC, Canada V8W 3R4}


\begin{document}

\begin{abstract}
Oseledets' celebrated Multiplicative Ergodic Theorem (MET) [V.I.
Oseledec, \emph{A multiplicative ergodic theorem. {C}haracteristic
{L}japunov, exponents of dynamical systems}, Trudy Moskov. Mat. Ob\v s\v
c. \textbf{19} (1968), 179--210.] is concerned with the exponential
growth rates of vectors under the action of a linear cocycle on
$\mathbb{R}^d$. When the linear actions are invertible, the MET
guarantees an almost-everywhere pointwise \emph{splitting} of
$\mathbb{R}^d$ into subspaces of distinct exponential growth rates
(called Lyapunov exponents).  When the linear actions are non-invertible,
Oseledets' MET only yields the existence of a \emph{filtration} of
subspaces, the elements of which contain all vectors that grow no faster
than exponential rates given by the Lyapunov exponents.  The authors
recently demonstrated [G. Froyland, S. Lloyd, and A. Quas, \emph{Coherent
structures and exceptional spectrum for {P}erron--{F}robenius cocycles},
Ergodic Theory and Dynam. Systems (to appear).] that a splitting over
$\mathbb{R}^d$ is guaranteed \emph{without} the invertibility assumption
on the linear actions. Motivated by applications of the MET to cocycles
of (non-invertible) transfer operators arising from random dynamical
systems, we demonstrate the existence of an Oseledets splitting for
cocycles of quasi-compact non-invertible linear operators on Banach
spaces.
\end{abstract}

\maketitle

\section{Introduction}

Oseledets-type ergodic theorems deal with dynamical systems
$\sigma\colon \Omega\to\Omega$ where for each $\omega\in\Omega$ there is
an operator (or in the original Oseledets case
a matrix) $\mathcal L_\omega$ acting on a linear space $X$. One then
studies the properties of the operator $\mathcal L^{(n)}_\omega=\mathcal
L_{\sigma^{n-1}\omega}\circ\cdots \circ \mathcal L_\omega$, giving an
$\omega$-dependent decomposition of $X$ into subspaces with a hierarchy
of expansion properties.

Prior to the previous work of the current authors, \cite{FLQ}, all of the
Oseledets-type theorems in the literature split into two cases according
to the hypotheses: the invertible and non-invertible cases.

\begin{description}
\item[Invertible case]
In this case the base dynamical system $\sigma$ is assumed to be
invertible and the operators $\mathcal L_\omega$ are assumed to be invertible
(or in some cases just injective). Integrability conditions may be
imposed on $\|\mathcal L^{-1}_\omega\|$.

The conclusion here is that the space $X$ admits an invariant
\emph{splitting} $E_1(\omega)\oplus E_2(\omega)\oplus \cdots$, finite
or countable, possibly with a `remainder' in the
infinite-dimensional case. Non-zero vectors in $E_i(\omega)$ expand
exactly at rate $\lambda_i$.

\item[Non-invertible case] In the non-invertible case no assumptions
  are made about invertibility of the base nor about injectivity of
  the operators. The weaker conclusion here is that $X$ admits an
  invariant \emph{filtration} $V_1(\omega)\supset
  V_2(\omega)\supset\cdots$ such that vectors in $V_i(\omega)\setminus
  V_{i+1}(\omega)$ expand at rate $\lambda_i$.

\end{description}

The conclusion in the invertible case may be seen to be much
stronger as one obtains an invariant family of complements to
$V_{i+1}(\omega)$ in $V_i(\omega)$. These are in general
finite-dimensional so that one `sees the vectors responsible for
$\lambda_i$ expansion'. This is of considerable importance in
applications.

Our principal contribution here is to focus on the
\emph{semi-invertible} case. Here assumptions are made about the
invertibility of the base transformation, but there are no assumptions
about invertibility or injectivity of the operators $\mathcal
L_\omega$. In spite of this we are able to show that one can obtain an
invariant splitting rather than the weaker invariant filtration, for
the setting investigated by Thieullen \cite{Thieullen} where the
random compositions have some quasi-compactness properties. In
\cite{FLQ} we obtained an analagous result for the original Oseledets
setting of matrices acting on $\mathbb R^d$.

\subsection{Set-up}

Let $(\Omega,\mathcal{F},\mathbb{P})$ be a probability space and
$(X,\|\cdot\|)$ a Banach space. A \emph{random dynamical system} is a
tuple $\mathcal R=(\Omega,\mathcal{F},\mathbb{P},\sigma,X,\mathcal L)$,
where $\sigma$ is an invertible measure-preserving transformation of
$(\Omega,\mathcal F,\mathbb{P})$, called the \emph{base} transformation,
and $\mathcal L:\Omega\to L(X,X)$ is a family of bounded linear maps of
$X$, called the \emph{generator}. We will later impose suitable
measurability conditions on $\mathcal L$.

For notational convenience, we write $\mathcal L(\omega)$ as $\mathcal
L_\omega$. A random dynamical system defines a \emph{cocycle}
$\mathbb{N}\times\Omega\to L(X,X)$:
\begin{equation}\label{eqn:cocycle}
(n,\omega)\mapsto \mathcal L^{(n)}_\omega:= \mathcal
L_{\sigma^{n-1}\omega}\circ\cdots\circ \mathcal
L_{\sigma\omega}\circ\mathcal L_{\omega}.
\end{equation}
We define the \emph{Lyapunov exponent in direction $v$},
$\lambda(\omega,v)$, by
\begin{equation}\label{eqn:exponent}
\lambda(\omega,v) = \limsup_{n\to\infty} \frac{1}{n}\log \|\mathcal
L^{(n)}_\omega v\|,\quad \omega\in\Omega,\ \ v\in X.
\end{equation}
Lyapunov exponents have the following well-known properties. For all
$\omega\in\Omega$, $u,v \in X$ and $\alpha\neq 0$:
\begin{enumerate}
\item[(i)] $\lambda(\omega,0)=-\infty$;
\item[(ii)] $\lambda(\omega,\alpha v) = \lambda(\omega,v)$;
\item[(iii)] $\lambda(\omega,u+v) \leq
\max\{ \lambda(\omega,u),\lambda(\omega,v)\}$
with equality if $\lambda(\omega,u)\neq \lambda(\omega,v)$;
\item[(iv)] $\lambda(\sigma\omega,\mathcal L_\omega v)=\lambda(\omega,v)$.
\end{enumerate}
We call the set $\Lambda(\omega) =
\{\lambda(\omega,v): v\in X\}$ the
\emph{Lyapunov spectrum}.
For $\alpha\in\mathbb{R}$, the set $\mathcal{V}_\alpha(\omega):=\{v\in X:
\lambda(\omega,v)\leq \alpha\}$ is a linear subspace of $X$, $\mathcal
L_\omega\mathcal{V}_\alpha(\omega)\subset
\mathcal{V}_\alpha(\sigma\omega)$ and if $\alpha'<\alpha$, then
$\mathcal{V}_{\alpha'}(\omega)\subset \mathcal{V}_\alpha(\omega)$.
For each $\omega\in\Omega$, the quantity $\lambda(\omega)$ is defined by
\begin{equation}\label{eqn:lambda}
\lambda(\omega) = \limsup_{n\to\infty} \frac{1}{n}\log \|\mathcal
L^{(n)}_\omega\|.
\end{equation}

\begin{definition}
Let $\mathcal R=(\Omega,\mathcal{F},\mathbb{P},\sigma,X,\mathcal L)$ be a
random dynamical system.
\begin{itemize}
\item We say that $\mathcal{R}$ is
\emph{quasi-compact} if for almost every $\omega$ there is an
$\alpha<\lambda(\omega)$ such that $\mathcal V_{\alpha}(\omega)$ is
finite co-dimensional. Of particular interest is the infimal $\alpha$
with this property. We call this quantity $\alpha(\omega)$.
\item For each isolated Lyapunov exponent $r\in\Lambda(\omega)$, let
$\epsilon_r>0$ be small enough that
$\Lambda(\omega)\cap(r-\epsilon_r,r)=\emptyset$. If the codimension $d$
of $\mathcal{V}_{r-\epsilon_r}(\omega)$ in $\mathcal{V}_r(\omega)$ is
finite, then we say $r$ is a Lyapunov exponent of \emph{multiplicity}
$d$.
\item The Lyapunov exponents greater than $\alpha(\omega)$ are said to be
\emph{exceptional}. As they are isolated, the exceptional Lyapunov
exponents $\{\lambda_i(\omega)\}_{i=1}^{p(\omega)}$ are either finite in
number $(p(\omega)<\infty)$ or else they are countably infinite
$(p(\omega)=\infty)$, accumulating only at $\alpha(\omega)$. We shall
always enumerate the exceptional Lyapunov exponents in decreasing order
$\lambda_1(\omega)>\lambda_2(\omega)>\cdots$. The \emph{exceptional
Lyapunov spectrum},
$\mathrm{EX}(\mathcal{R})(\omega)=
\{(\lambda_i(\omega),d_i(\omega))\}_{i=1}^{p(\omega)}$,
consists of all exceptional Lyapunov exponents paired with their
multiplicities.
\end{itemize}
\end{definition}

In the setting where $\mathbb P$ is ergodic and the generator $\mathcal L$
satisfies suitable measurability conditions,
$\lambda(\omega)=\lambda^*$, $\alpha(\omega)=\alpha^*$ and the
exceptional Lyapunov spectrum will be independent of $\omega$
$\mathbb{P}$-a.e.

If $X$ is a finite dimensional space, then $\alpha(\omega)=-\infty$
for each $\omega$ and so all Lyapunov exponents are exceptional.
Since the sets $\mathcal{V}_\alpha(\omega)$ are subspaces for each
$\alpha\in\mathbb{R}$, the number of Lyapunov exponents counted with
multiplicity is bounded by the dimension of $X$, and so each is
isolated and of finite multiplicity.

We are interested in Banach space analogues of the multiplicative ergodic
theorem. In order to make sense of this it will be necessary to put a
topology on suitable collections of subspaces of Banach spaces. The
Grassmannian $\mathcal G(X)$ of a Banach space $X$ is defined to be the
set of \emph{complemented} closed subspaces $E$ of $X$ (that is, those
for which there is a second closed subspace $F$ with the property that
$X=E\oplus F$). Since every finite dimensional subspace of $X$ is closed
and complemented, the collection of $d$-dimensional subspaces of $X$
forms a subset of $\mathcal{G}(X)$, which we denote by
$\mathcal{G}_d(X)$. Also, finite codimensional subspaces are necessarily
complemented, so the collection of \emph{closed} $c$-codimensional
subspaces of $X$ forms a subset of $\mathcal{G}(X)$, which we denote by
respectively $\mathcal{G}^c(X)$. More details on the Grassmannian are
given in Section \ref{sec:grass} along with proofs of some basic theorems
concerning Grassmannians that we shall need later.

\begin{definition}
Consider a random dynamical system
$\mathcal{R}=(\Omega,\mathcal{F},\mathbb{P}, \sigma,X,\mathcal L)$ with
ergodic base, and suppose $\mathcal{R}$ is quasi-compact with exceptional
spectrum $\{(\lambda_i,d_i)\}_{i=1}^p$ (where $1\le p\le\infty$). A
\emph{Lyapunov filtration} for $\mathcal{R}$ is a collection of maps
$(V_i:\Omega\to\mathcal{G}^{c_i}(X))_{i=1}^p$,
 such that for all $\omega$ in a full measure $\sigma$-invariant subset
$\Omega'\subset \Omega$ and for each
$i=1,\ldots,p$:
\begin{enumerate}
\item $X=V_1(\omega)\supset \cdots\supset V_i(\omega)\supset V_{i+1}(\omega)$;
\item $\mathcal{V}_{\alpha(\omega)}(\omega)\subseteq \bigcap_{i=1}^p V_i(\omega)$
 with equality if and only if $p=\infty$;
\item $\mathcal L_\omega V_i(\omega) = V_i(\sigma\omega)$;
\item $\lambda(\omega,v) =
\lim_{n\to\infty}(1/n)\log\|\mathcal L^{(n)}_\omega v\| = \lambda_i$
if and only if $v\in V_i(\omega)\backslash V_{i+1}(\omega)$,
\end{enumerate}
where we set $V_{p+1}(\omega):=\mathcal{V}_{\alpha(\omega)}(\omega)$ if
$p<\infty$. An \emph{Oseledets splitting} for $\mathcal{R}$ is a Lyapunov
filtration $(V_i:\Omega\to\mathcal{G}^{c_i}(X))_{i=1}^p$ together with an
additional collection of maps
$(E_i:\Omega\to\mathcal{G}_{d_i}(X))_{i=1}^p$ (with $d_i=c_{i+1}-c_i$),
called \emph{Oseledets subspaces}, such that for all $\omega$ in a full
measure $\sigma$-invariant subset $\Omega'\subset \Omega$ and for each
$i=1,\ldots,p$:
\begin{enumerate}
\setcounter{enumi}{4}
\item $ V_i(\omega)=E_i(\omega)\oplus V_{i+1}(\omega)$;
\item $\mathcal L_\omega E_i(\omega) = E_i(\sigma\omega)$;
\item $\lambda(\omega,v) =
\lim_{n\to\infty}(1/n)\log\|\mathcal L^{(n)}_\omega v\| = \lambda_i$
if $v \in E_i(\omega)\backslash\{0\}$.
\end{enumerate}
\end{definition}

We say a Lyapunov filtration is measurable if the maps
$V_i:\Omega\to\mathcal{G}^{c_i}(X)$ are measurable with respect to the
Borel $\sigma$-algebra on $\mathcal G(X)$ for each $1\leq i\leq p$, where
the topology will be defined in the next section.  We say an Oseledets
splitting is measurable if, in addition, the maps
$E_i:\Omega\to\mathcal{G}_{d_i}(X)$ are measurable.

\subsection{Multiplicative Ergodic Theorems}

The first result on the existence of Lyapunov filtrations and
Oseledets splittings in the finite dimensional setting is the
Multiplicative Ergodic Theorem of Oseledets. Throughout, we define
$\log^+(x)=\max\{0,\log x\}$.

\begin{theorem}[Oseledets \cite{Oseledec}]\label{thm:Oseledec}
Let
$\mathcal{R}=(\Omega,\mathcal{F},\mathbb{P},\sigma,\mathbb{R}^d,\mathcal
L)$ be a random dynamical system with ergodic base, and suppose that the
generator $\mathcal L$ is measurable and $\int \log^+\|\mathcal
L_\omega\|\,\mathrm{d}\mathbb{P} <+\infty$. Then $\mathcal{R}$ admits a
measurable Lyapunov filtration.

Moreover, if the base is invertible, $\mathcal L_\omega$ is invertible
a.\,e. and $\int \log^+\|\mathcal L_\omega^{\pm1}\|\,\mathrm{d}\mathbb{P}
<+\infty$, then $\mathcal{R}$ admits a measurable Oseledets splitting.
\end{theorem}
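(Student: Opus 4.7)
The plan is to build the conclusion on Kingman's subadditive ergodic theorem applied to the cocycle and its exterior powers. Submultiplicativity of operator norm makes $n\mapsto \log\|\mathcal L^{(n)}_\omega\|$ subadditive over $\sigma$, and the hypothesis $\int\log^+\|\mathcal L_\omega\|\,d\mathbb P<\infty$ provides the integrability needed for Kingman, giving the existence (a.s., and hence constantly by ergodicity) of $\lambda_1=\lim\frac1n\log\|\mathcal L^{(n)}_\omega\|$. The same machinery applied to $\wedge^k\mathcal L^{(n)}_\omega$ on $\wedge^k\mathbb R^d$ (whose norm is bounded by $\|\mathcal L^{(n)}_\omega\|^k$, so integrability is automatic) produces limits $\Lambda_k$ for $k=1,\ldots,d$. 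The function $k\mapsto\Lambda_k$ is concave, and its successive slopes yield the distinct Lyapunov exponents $\lambda_1>\cdots>\lambda_p$ together with their multiplicities $d_i$ read off from the lengths of its linear segments.

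With the spectrum identified, I would obtain the filtration by setting $V_i(\omega):=\{v\in\mathbb R^d:\lambda(\omega,v)\le\lambda_i\}$. Properties (ii)--(iii) of the exponent from the excerpt make each $V_i(\omega)$ a linear subspace, (iv) gives the equivariance $\mathcal L_\omega V_i(\omega)\subseteq V_i(\sigma\omega)$, and the dimension count $\dim V_i-\dim V_{i+1}=d_i$ is forced by matching the top-$k$ exponent sums to $\Lambda_k$. Measurability of $V_i$ follows from the measurability of $\omega\mapsto\lambda(\omega,v)$ (for each fixed $v$) and a selection/rank argument, e.g.\ via determinantal characterisations in terms of $\mathcal L^{(n)}_\omega$-images of deterministic bases. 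For the splitting in the invertible case, the plan is to apply the filtration result just proved to the \emph{backward} cocycle $\tilde{\mathcal L}^{(n)}_\omega:=(\mathcal L^{(n)}_{\sigma^{-n}\omega})^{-1}$ over $\sigma^{-1}$; the extra integrability $\int\log^+\|\mathcal L_\omega^{-1}\|\,d\mathbb P<\infty$ gives a backward filtration $W^-_j(\omega)$ with exponents $-\lambda_p>\cdots>-\lambda_1$, and one sets $E_i(\omega):=V_i(\omega)\cap W^-_{p-i+1}(\omega)$. Invariance under $\mathcal L_\omega$ of both intersecting filtrations yields~(6), while the upper bound $\lambda(\omega,v)\le\lambda_i$ on $E_i$ comes from $V_i$ and the matching lower bound $\liminf_n\frac1n\log\|\mathcal L^{(n)}_\omega v\|\ge\lambda_i$ comes from membership in $W^-_{p-i+1}$.

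The main obstacle is the transversality step: one must verify that $V_i(\omega)$ (of dimension $d_i+\cdots+d_p$) and $W^-_{p-i+1}(\omega)$ (of dimension $d_1+\cdots+d_i$) intersect in full expected dimension $d_i$, so that $E_i$ has the correct multiplicity and $\mathbb R^d=\bigoplus E_i(\omega)$. The cleanest route is to observe that the sum of all forward exponents equals $\Lambda_d=\lim\frac1n\log|\det\mathcal L^{(n)}_\omega|$, which equals the negative of the corresponding quantity for the backward cocycle; any failure of transversality would leave some direction with strictly subordinate growth in \emph{both} time directions, contradicting this exact volume identity. Once transversality is in hand, the $\limsup$ in~(7) upgrades to a genuine limit by combining the forward bound with the two-sided squeeze from the backward filtration, and measurability of the $E_i$ is inherited from the measurability of the two filtrations together with continuity of intersection on transverse pairs in the Grassmannian.
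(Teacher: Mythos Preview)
The paper does not prove this statement: Theorem~\ref{thm:Oseledec} is stated with attribution to Oseledets~\cite{Oseledec} as a classical result, and no proof is given. It serves only as background and motivation for the paper's own contribution (Theorem~\ref{thm:main}), so there is no ``paper's own proof'' to compare your proposal against.

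That said, your sketch is a recognisable outline of one of the standard proofs of the multiplicative ergodic theorem (the Raghunathan/exterior-power approach: Kingman on $\wedge^k$, read off the spectrum from the concave sequence $\Lambda_k$, define the filtration by exponent sublevel sets, and in the invertible case intersect with the backward filtration). A couple of points you should be aware of if you were to flesh this out. First, your transversality argument via the volume identity $\Lambda_d=\lim\frac1n\log|\det\mathcal L^{(n)}_\omega|$ tacitly assumes $\det\mathcal L_\omega\ne 0$ a.e.\ and that $\log|\det\mathcal L_\omega|$ is integrable from below; under the stated hypotheses the bottom exponent may be $-\infty$ (some $\mathcal L_\omega$ singular), in which case $\Lambda_d=-\infty$ and the volume argument as you phrased it is vacuous. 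The usual remedy is either to work modulo the eventual kernel or to argue transversality directly by showing that a nonzero vector in $V_{i+1}\cap W^-_{p-i+1}$ would have forward exponent $\le\lambda_{i+1}$ and backward exponent forcing forward exponent $\ge\lambda_i$, a contradiction; this avoids determinants entirely. Second, the equivariance you want for the splitting is an equality $\mathcal L_\omega E_i(\omega)=E_i(\sigma\omega)$, not just an inclusion; this needs the invertibility of $\mathcal L_\omega$ and a dimension count, which you have available in the second part but should state.
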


This situation may be summarized by saying that Oseledets splittings can
be found when the base is invertible and the linear actions in the
cocycle are invertible with bounded inverses, whereas in the
non-invertible linear action cases the theorem only guarantees a Lyapunov
filtration. This situation persisted in all subsequent versions
\cite{Ruelle,Mane,Thieullen} and extensions of the Oseledets theorem, to
our knowledge, until the result stated below by the current authors which
obtained a Oseledets splitting in the \emph{semi-invertible} case where
the base is invertible but the operators themselves are not assumed to be
invertible (or they are invertible but there is no bound on the
logarithmic norms of their inverses).

\begin{theorem}[Froyland, Lloyd and Quas \cite{FLQ}]\label{thm:FLQ}
Let
$\mathcal{R}=(\Omega,\mathcal{F},\mathbb{P},\sigma,\mathbb{R}^d,\mathcal
L)$ be a random dynamical system with an invertible ergodic base, and
suppose $\mathcal L$ is measurable and $\int \log^+\|\mathcal
L_\omega\|\,\mathrm{d}\mathbb{P} <+\infty$. Then $\mathcal{R}$ admits a
measurable Oseledets splitting.
\end{theorem}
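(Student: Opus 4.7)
The plan is to start from the measurable Lyapunov filtration $V_1(\omega)\supset\cdots\supset V_p(\omega)$ produced by the non-invertible version of the Multiplicative Ergodic Theorem (Theorem~\ref{thm:Oseledec}), which makes no invertibility assumption on the operators, and to build the Oseledets subspaces $E_i(\omega)$ complementary to $V_{i+1}(\omega)$ inside $V_i(\omega)$ by a backward-then-forward iteration that exploits the invertibility of $\sigma$. The key preliminary observation is that the invariance $\mathcal L_\omega V_j(\omega)=V_j(\sigma\omega)$ descends to a cocycle of linear maps $\widetilde{\mathcal L}^{(i)}_\omega:Q_i(\omega)\to Q_i(\sigma\omega)$ on the finite-dimensional quotients $Q_i(\omega):=V_i(\omega)/V_{i+1}(\omega)$. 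Since every nonzero class in $Q_i(\omega)$ has Lyapunov exponent exactly $\lambda_i>-\infty$, this induced map is injective; and since $\dim Q_i(\omega)=\dim Q_i(\sigma\omega)=d_i$, it is in fact a linear isomorphism. Thus on each quotient one has an \emph{invertible} cocycle with a single Lyapunov exponent $\lambda_i$.

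Using the invertibility of $\sigma$, fix $\omega$ and, for each $\bar v\in Q_i(\omega)$, set $\bar v_{-n}:=(\widetilde{\mathcal L}^{(n)}_{\sigma^{-n}\omega})^{-1}(\bar v)\in Q_i(\sigma^{-n}\omega)$. Choose any measurable lift $\hat v_{-n}\in V_i(\sigma^{-n}\omega)$ of $\bar v_{-n}$ and define
\[
  u_n(\bar v):=\mathcal L^{(n)}_{\sigma^{-n}\omega}(\hat v_{-n})\in V_i(\omega).
\]
By construction $u_n(\bar v)$ projects to $\bar v$ in $Q_i(\omega)$ for every $n$. I would then show that the sequence $u_n(\bar v)$ converges to a limit $s_i(\omega,\bar v)\in V_i(\omega)$ as $n\to\infty$, and set $E_i(\omega):=\{s_i(\omega,\bar v):\bar v\in Q_i(\omega)\}$. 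Linearity and equivariance of the construction yield that $E_i(\omega)$ is a $d_i$-dimensional subspace complementary to $V_{i+1}(\omega)$ in $V_i(\omega)$ with $\mathcal L_\omega E_i(\omega)=E_i(\sigma\omega)$. Moreover, for $v=s_i(\omega,\bar v)\in E_i(\omega)\setminus\{0\}$, the image $\mathcal L^{(n)}_\omega v$ projects onto a nonzero class in $Q_i(\sigma^n\omega)$ that grows exactly at rate $\lambda_i$; combined with the upper bound $\lambda(\omega,v)\le\lambda_i$ coming from $v\in V_i(\omega)$, this gives property~(7). An iterated application of (5) together with $\mathcal V_{\alpha(\omega)}(\omega)=\{0\}$ in the finite-dimensional setting then yields $\mathbb R^d=E_1(\omega)\oplus\cdots\oplus E_p(\omega)$.

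The main obstacle is the convergence of $u_n(\bar v)$. Writing $u_{n+1}(\bar v)=\mathcal L^{(n)}_{\sigma^{-n}\omega}(\hat v'_{-n})$ with $\hat v'_{-n}:=\mathcal L_{\sigma^{-(n+1)}\omega}\hat v_{-(n+1)}$, the difference $\hat v'_{-n}-\hat v_{-n}$ lies in $V_{i+1}(\sigma^{-n}\omega)$, and one needs quantitative estimates of two kinds: (a) that $\mathcal L^{(n)}_{\sigma^{-n}\omega}$ expands vectors in $V_{i+1}(\sigma^{-n}\omega)$ at rate at most $\lambda_{i+1}+\varepsilon$, with a tempered prefactor in $\sigma^{-n}\omega$; and (b) that, because the quotient cocycle has single exponent $\lambda_i$, the lifts may be chosen with $\|\hat v_{-n}\|\lesssim e^{-n(\lambda_i-\varepsilon)}\|\bar v\|$ up to a tempered factor in $\sigma^{-n}\omega$. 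Combining (a) and (b) produces a Cauchy estimate $\|u_{n+1}(\bar v)-u_n(\bar v)\|\lesssim e^{-n(\lambda_i-\lambda_{i+1}-3\varepsilon)}\|\bar v\|$, which is summable for $\varepsilon$ smaller than a third of the spectral gap $\lambda_i-\lambda_{i+1}>0$. The technical work is therefore to establish these tempered estimates carefully (via Birkhoff's theorem and the non-invertible MET applied separately to each invariant subspace $V_j$), to choose all sections and lifts in a measurable way so that the measurability of $E_i:\Omega\to\mathcal G_{d_i}(\mathbb R^d)$ is preserved under the pointwise limit, and finally to verify that the uniqueness of the limit under change of lift propagates to the equivariance relation $\mathcal L_\omega E_i(\omega)=E_i(\sigma\omega)$.
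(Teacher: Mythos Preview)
This paper does not prove Theorem~\ref{thm:FLQ}; it is quoted from the authors' earlier work~\cite{FLQ}. What the paper does prove is the Banach-space generalisation, Theorem~\ref{thm:main}, and the method used there is quite different from yours. Rather than working on the quotients $V_i/V_{i+1}$, the existence argument (Claim~\ref{clm:existence}) follows Thieullen's extension trick: one embeds $X$ into $\tilde X=\ell^\infty(X)$, defines $\tilde{\mathcal L}_\omega(x_0,x_1,\dots)=(\mathcal L_\omega x_0,\alpha_0 x_0,\alpha_1 x_1,\dots)$, observes that every $\tilde{\mathcal L}_\omega$ is injective, invokes the invertible case of Thieullen's theorem to obtain an Oseledets splitting $\tilde E_i(\omega)$ upstairs, and then sets $E_i(\omega)=\pi(\tilde E_i(\omega))$ where $\pi$ is the zeroth-coordinate projection. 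The remaining work is to check that $\pi|_{\tilde E_i(\omega)}$ is injective (immediate, since anything in $\ker\pi$ has exponent $-\infty$) and to transfer the $\mathbb P$-continuity down.

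Your direct construction---pulling back along the invertible quotient cocycle $\widetilde{\mathcal L}^{(i)}$ on $Q_i=V_i/V_{i+1}$ and then pushing forward with the original cocycle---is a legitimate alternative in $\mathbb R^d$, and the Cauchy estimate you sketch is essentially correct. The two tempered bounds you flag are exactly the content of Lemmas~\ref{lem:uniformity} and~\ref{lem:fullorbits} in this paper (the latter relying on \cite[Lemma~8.2]{FLQ} for the conversion between forward and backward growth rates), so the paper itself supplies the analytic ingredients you would need. The extension-space approach buys a clean reduction to an existing black-box theorem and works uniformly in the infinite-dimensional quasi-compact setting; your approach is more elementary and arguably more transparent in $\mathbb R^d$, but extending it to the Banach-space case of Theorem~\ref{thm:main} would require equipping the quotients $V_i/V_{i+1}$ with a workable norm and measurable structure, which is less straightforward than simply projecting from $\tilde X$.
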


\begin{remark}
It is natural to ask whether one can obtain an invariant splitting in the
absence of invertibility of either the base or the operators. In section
\ref{sec:invbase} we show that if the base is non-invertible then even in
the case where the operators are invertible one cannot in general obtain
an invariant splitting.
\end{remark}

The result of Oseledets has been extended by many authors. Of particular
relevance to us are the result of Ruelle \cite{Ruelle} dealing with the
case where $X$ is a Hilbert space and the result of Ma\~{n}\'e
\cite{Mane} on random dynamical systems of compact operators in Banach
spaces. This was subsequently extended to the quasi-compact case by
Thieullen \cite{Thieullen}. Thieullen's result will be stated precisely
in Section \ref{sec:thieullen}. A key requirement for Thieullen's
extension is that the dependence of the operator $\mathcal L_\omega$ on
$\omega$ is required to be $\mathbb{P}$-continuous (the definition
follows in Section \ref{sec:thieullen}) and it is upon this that we
build. It should be pointed out that this is a significant limitation as
many natural random dynamical systems fail to satisfy this condition
(e.g. if $T_\omega$ is a family of Lasota--Yorke maps, it is almost never
the case that their Perron--Frobenius operators depend in a
$\mathbb{P}$-continuous way on $\omega$). A parallel approach was taken
in the recent thesis of Lian \cite{Lian} where the measurability
condition is relaxed to the weaker `strongly measurable' condition
(meaning that for each fixed $x\in X$, the map $\omega\mapsto \mathcal
L_\omega x$ is measurable). The cost (which is again heavy from the point
of view of applications) is that in order to obtain suitable
measurability Lian imposes the condition that the Banach space $X$ be
separable.

Our main theorem is related to Thieullen's theorem in exactly the way
that our theorem from \cite{FLQ} is related to Oseledets' Theorem: it
provides an Oseledets splitting for the category of a quasi-compact
linear action in the semi-invertible case where the base is invertible
without any invertibility assumptions on the operators. We include the
statement here, but defer some relevant definitions to section
\ref{sec:thieullen}.

\begin{maintheorem*}[Theorem \ref{thm:main}]
Let $\Omega$ be a Borel subset of a separable complete metric space,
$\mathcal{F}$ the Borel sigma-algebra and $\mathbb{P}$ a Borel
probabilty. Let $X$ be a Banach space and consider a random dynamical
system $\mathcal{R}=(\Omega,\mathcal{F},\mathbb{P},\sigma,X,\mathcal L)$
with base transformation $\sigma:\Omega\to\Omega$ an ergodic
homeomorphism, and suppose that the generator $\mathcal L:\Omega\to
L(X,X)$ is $\mathbb{P}$-continuous and satisfies
$$
\int \log^+\|\mathcal L_\omega\|\,\mathrm{d}\mathbb{P} <+\infty.
$$
If $\kappa(\omega)<\lambda(\omega)$ (where $\kappa$ is the ``index of
compactness'' of $\mathcal{L}$) for almost every $\omega$, then
$\mathcal{R}$ is quasi-compact and admits a unique
$\mathbb{P}$-continuous Oseledets splitting.
\end{maintheorem*}

\subsection{Overview}

An outline of the paper is as follows. In Section \ref{sec:grass}
we prove some basic results concerning Grassmanians. In Section
\ref{sec:thieullen} we describe the result of Thieullen
\cite{Thieullen} and introduce the key notions of
$\mathbb{P}$-continuity and index of compactness from that work. We
then prove our main result.  Section \ref{sec:apps} describes two
applications of our main result:  Perron--Frobenius cocycles
generated by random ``Rychlik'' maps (generalisations of
Lasota--Yorke maps), and transfer operator cocycles generated by
subshifts of finite type with random weight functions.

\section{The Grassmannian of a Banach Space}\label{sec:grass}

Let $X$ be a Banach space and suppose that $E,F\subset X$ are subspaces
forming a direct (algebraic) sum: that is, $E+F=X$ and $E\cap F=\{0\}$.
This decomposition specifies a linear map $\Proj{F}{E}(e+f)=f$ with range
$F$ and kernel $E$, called the \emph{projection onto $F$ along $E$}.
Conversely, any \emph{projection} $P:X\to X$ (that is, a linear map
satisfying $P^2=P$) determines a decomposition $X=\ker(P)+ \ran(P)$,
where $\ker(P)\cap\ran(P)=\{0\}$.

Unlike in finite dimensions, not all projections in infinite-dimensional
Banach spaces are continuous.
A necessary (and sufficient) condition for a projection to be continuous
is that it has a closed range.
Since every continuous linear map has a closed
kernel, it follows that every \emph{continuous} projection determines a
\emph{topological direct sum}: a direct sum decomposition into
complementary \emph{closed} subspaces. We denote the topological direct
sum of subspaces $E,F\subset X$ by $E\oplus F$. Conversely, it follows
from the Closed Graph Theorem that if $E\subset X$ is a closed subspace
with a closed complementary subspace $F\subset X$, then $\Proj{F}{E}$ is
continuous.

As mentioned before the
Grassmannian of $X$, denoted $\mathcal{G}(X)$, is the collection of
complemented closed subspaces of $X$.
The set $\mathcal{G}(X)$
admits a Banach manifold structure as follows. Given
$E_0\in\mathcal{G}(X)$, fix any $F_0\in\mathcal{G}(X)$ for which
$E_0\oplus F_0=X$. We can use $F_0$ to define a neighbourhood of $E_0$:
we set $U_{F_0} = \{E\in\mathcal{G}(X): E\oplus F_0 = X\}$. We then
define an isomorphism $\phi_{E_0,F_0}$ from $U_{F_0}$ to the Banach space
$L(E_0,F_0)$ by $\phi_{E_0,F_0}(E) = \Proj{F_0}{E}|E_0$. The triples
$\{U_{F_0}, \phi_{E_0,F_0}, L(E_0,F_0)\}$ form an atlas for
$\mathcal{G}(X)$ showing that near $E_0$, $\mathcal{G}(X)$ is locally
modelled on $L(E_0,F_0)$. The $(E_0,F_0)$-\emph{local norm} on  $U_{F_0}$
is defined by
\begin{equation}\label{eqn:locnorm}
\| E \|_{(E_0, F_0)} := \|\Proj{F_0}{E}|_{E_0}\|.
\end{equation}

We now prove some basic properties of the Grassmannian $\mathcal{G}(X)$.

\begin{lemma}\label{lem:projcts}
Let $X$ be a Banach space and let $\Omega$ be a topological space.
Suppose that for each $\omega\in\Omega$ there are closed subspaces
$V(\omega)$ and $W(\omega)$ whose topological direct sum is $X$. Suppose
further that $V(\omega)$ and $W(\omega)$ depend continuously on $\omega$.

Let $R(\omega)=\Proj{V(\omega)}{W(\omega)}$ be the projection of $X$ onto
$V(\omega)$ along $W(\omega)$. Then the mapping $\omega\mapsto R(\omega)$
is continuous (with respect to the operator norm on $L(X,X)$).
\end{lemma}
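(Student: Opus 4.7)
\emph{Plan.} Fix $\omega_0 \in \Omega$ and write $V_0 := V(\omega_0)$, $W_0 := W(\omega_0)$, $R_0 := R(\omega_0)$. By the Closed Graph Theorem $R_0$ is bounded. The continuity of $V$ and $W$ at $\omega_0$, read through the Grassmannian charts $\phi_{V_0,W_0}$ and $\phi_{W_0,V_0}$ introduced in \eqref{eqn:locnorm}, supplies a neighbourhood $\mathcal{N}$ of $\omega_0$ and bounded operators $A(\omega) \in L(V_0,W_0)$, $B(\omega) \in L(W_0,V_0)$ such that
\begin{equation*}
V(\omega) = \{v + A(\omega)v : v \in V_0\}, \qquad W(\omega) = \{w + B(\omega)w : w \in W_0\},
\end{equation*}
with $\|A(\omega)\|,\|B(\omega)\| \to 0$ as $\omega \to \omega_0$. (This is the standard graph representation of a subspace close to $V_0$ or $W_0$ in the chart; the operator norms are comparable to the local Grassmannian norms of $V(\omega)$ and $W(\omega)$.)

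Given $x \in X$, I would compare two direct-sum decompositions. Using $R_0$, write $x = v' + w'$ with $v' := R_0 x \in V_0$ and $w' := (I-R_0)x \in W_0$. Using $R(\omega)$, write $x = (v + A(\omega)v) + (w + B(\omega)w)$ for unique $v \in V_0$, $w \in W_0$. Matching $V_0$- and $W_0$-components yields the linear system
\begin{equation*}
\begin{pmatrix} v' \\ w' \end{pmatrix} = \begin{pmatrix} I & B(\omega) \\ A(\omega) & I \end{pmatrix}\begin{pmatrix} v \\ w \end{pmatrix}.
\end{equation*}
For $\omega \in \mathcal N$ close enough to $\omega_0$ that $\|A(\omega)\|+\|B(\omega)\|<1$, this block operator is invertible by a Neumann series, yielding an explicit real-analytic formula for $(v,w)$ --- and hence for $R(\omega)x = v + A(\omega)v$ --- in terms of $(R_0 x,(I-R_0)x)$ and the perturbations $A(\omega)$, $B(\omega)$. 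Setting $A=B=0$ recovers $R(\omega)=R_0$, so continuity of the formula in its entries gives $\|R(\omega)-R_0\|_{L(X,X)} \to 0$.

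The one mildly technical point is converting block-wise norm bounds into operator-norm bounds on $L(X,X)$. Because $V_0 \oplus W_0 = X$ is a \emph{topological} direct sum, the ambient norm on $X$ is equivalent to $(v',w') \mapsto \|v'\|+\|w'\|$, with constants controlled by $\|R_0\|$ and $\|I-R_0\|$; hence block-wise smallness of the off-diagonal perturbation translates into smallness of $R(\omega)-R_0$ in $L(X,X)$ with constants independent of $\omega \in \mathcal N$. Applied at an arbitrary $\omega_0 \in \Omega$, this establishes continuity of $\omega \mapsto R(\omega)$ everywhere. I expect the only real obstacle to be the bookkeeping involved in this last equivalence-of-norms step; the core argument is simply perturbation via a $2\times 2$ block Neumann inversion.
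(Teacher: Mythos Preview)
Your argument is correct and is essentially the same as the paper's: both split $x$ into its $V(\omega)\oplus W(\omega)$ components and then further into $V_0\oplus W_0$ components, so that your $(v,A(\omega)v,B(\omega)w,w)$ are precisely the paper's $(x_{11},x_{12},x_{21},x_{22})$, and your Neumann-series inversion of the $2\times 2$ block is the abstract form of the paper's hands-on inequalities $\|x_{11}\|+\|x_{22}\|<2C\|x\|/(1-\epsilon)$ and $\|x_{12}\|+\|x_{21}\|<\epsilon(\|x_{11}\|+\|x_{22}\|)$. The only cosmetic difference is that you package the perturbation via the chart maps $A(\omega),B(\omega)$ and invoke the Neumann series explicitly, whereas the paper carries out the same estimate elementwise.
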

\begin{proof}
Let $\omega_0\in\Omega$. Since $V(\omega)$ and $W(\omega)$ are continuous
families of subspaces, there exists a neighbourhood $N_1$ of $\omega_0$
such that for all $\omega\in N_1$, $V(\omega)\oplus W(\omega_0)=X$ and
$V(\omega_0)\oplus W(\omega)=X$.

Since $X=V(\omega_0)\oplus W(\omega_0)$, both
$\|\Proj{V(\omega_0)}{W(\omega_0)}\|$ and
$\|\Proj{W(\omega_0)}{V(\omega_0)}\|$ are finite. Let $C$ be the greater
of the two.

Given any $\epsilon>0$, since $V(\omega)$ and $W(\omega)$ are continuous
there is a neighbourhood $N_2$ of $\omega_0$ contained in $N_1$ such that
for $\omega\in N_2$ one has
\begin{align*}
  &\|\Proj{W(\omega_0)}{V(\omega)}\vert_{V(\omega_0)}\|<\epsilon
  \text{; and }\\
  &\|\Proj{V(\omega_0)}{W(\omega)}\vert_{W(\omega_0)}\|<\epsilon.
\end{align*}
Let $x$ be in $X$. We now have
$x=\Proj{V(\omega)}{W(\omega)}(x)+\Proj{W(\omega)}{V(\omega)}(x)$. Write
the right side as $x_1+x_2$. Now we split $x_1$ and $x_2$ into parts
lying in $V(\omega_0)$ and $W(\omega_0)$ as $x_1=x_{11}+x_{12}$ and
$x_2=x_{21}+x_{22}$ so that
$x=x_{11}+x_{12}+x_{21}+x_{22}=(x_{11}+x_{21})+(x_{12}+x_{22})$. We have
$\Proj{V(\omega)}{W(\omega)}(x)=x_1=x_{11}+x_{12}$ and
$\Proj{V(\omega_0)}{W(\omega_0)}(x)=x_{11}+x_{21}$ so that the difference
is $x_{12}-x_{21}$.

Rearranging we have $x_{22}=x_2-x_{21}$ so that
$-x_{21}=\Proj{V(\omega_0)}{W(\omega)}(x_{22})$ so that
$\|x_{21}\|<\epsilon\|x_{22}\|$. Similarly
$\|x_{12}\|<\epsilon\|x_{11}\|$.

We have $\|x_{11}+x_{21}\|=\|\Proj{V(\omega_0)}{W(\omega_0)}(x)\|\le
C\|x\|$ so that $\|x_{11}\|\le C\|x\|+\|x_{21}\|<C\|x\|+
\epsilon\|x_{22}\|$ and similarly $\|x_{22}\|<C\|x\|+\epsilon
\|x_{11}\|$. Summing and rearranging we obtain
$\|x_{11}\|+\|x_{22}\|<2C/(1-\epsilon)\|x\|$. From the previous equations
we obtain $\|R(\omega)(x)-R(\omega_0)(x)\|\le
\|x_{12}\|+\|x_{21}\|<2C\epsilon/(1-\epsilon)\|x\|$. Since $\epsilon$ may
be chosen arbitrarily small, this establishes continuity of $R$ at
$\omega_0$.
\end{proof}

\begin{lemma}\label{lem:restnormcts}
Let $R\colon\Omega\to L(X,X)$ and $E\colon\Omega\to\mathcal G(X)$ be
continuous. Then $\omega\mapsto \|R(\omega)\vert_{E(\omega)}\|$ is
continuous.
\end{lemma}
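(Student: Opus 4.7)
My plan is to fix an arbitrary $\omega_0\in\Omega$ and locally parametrize the moving subspaces $E(\omega)$ by the fixed space $E(\omega_0)$ using a chart on $\mathcal G(X)$; this will convert the quantity $\|R(\omega)|_{E(\omega)}\|$ into something close to the norm of a continuously varying family of operators on the fixed space $E(\omega_0)$.

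First I would pick a closed complement $F_0$ of $E(\omega_0)$, so $E(\omega_0)\oplus F_0=X$. By the definition of the Grassmannian topology, continuity of $E$ at $\omega_0$ means that for all $\omega$ in some neighbourhood of $\omega_0$ one has $E(\omega)\oplus F_0=X$, and the chart value $T_\omega:=\Proj{F_0}{E(\omega)}|_{E(\omega_0)}\in L(E(\omega_0),F_0)$ depends continuously on $\omega$, with $T_{\omega_0}=0$. Decomposing an arbitrary $e\in E(\omega)$ along $E(\omega_0)\oplus F_0$ and then applying $\Proj{F_0}{E(\omega)}$ shows that $e=e_0-T_\omega(e_0)$ for a unique $e_0\in E(\omega_0)$; thus the maps $\alpha_\omega\colon E(\omega_0)\to X$ defined by $\alpha_\omega(e_0):=e_0-T_\omega(e_0)$ are bijections of $E(\omega_0)$ onto $E(\omega)$ depending continuously on $\omega$ in operator norm, with $\alpha_{\omega_0}$ the inclusion $E(\omega_0)\hookrightarrow X$.

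Next I would set $A_\omega:=R(\omega)\circ\alpha_\omega\in L(E(\omega_0),X)$. Since both $R$ and $\omega\mapsto\alpha_\omega$ are continuous, $A_\omega\to A_{\omega_0}$ in operator norm as $\omega\to\omega_0$, and $\|A_{\omega_0}\|=\|R(\omega_0)|_{E(\omega_0)}\|$. Rewriting the quantity of interest through this parametrization gives
$$
\|R(\omega)|_{E(\omega)}\|=\sup_{e_0\ne 0}\frac{\|A_\omega(e_0)\|}{\|\alpha_\omega(e_0)\|},
$$
and the triangle inequality $(1-\|T_\omega\|)\|e_0\|\le\|\alpha_\omega(e_0)\|\le(1+\|T_\omega\|)\|e_0\|$ (valid once $\|T_\omega\|<1$) sandwiches this ratio between $\|A_\omega\|/(1+\|T_\omega\|)$ and $\|A_\omega\|/(1-\|T_\omega\|)$. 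Since $\|T_\omega\|\to 0$ and $\|A_\omega\|\to\|R(\omega_0)|_{E(\omega_0)}\|$ as $\omega\to\omega_0$, both bounds converge to $\|R(\omega_0)|_{E(\omega_0)}\|$, yielding continuity at $\omega_0$.

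The only step requiring real care is the chart description, namely identifying $E(\omega)$ as the graph of $-T_\omega$ inside $E(\omega_0)\oplus F_0$ and confirming that $\omega\mapsto\alpha_\omega$ inherits the continuity of $\omega\mapsto T_\omega$ from the chart; once this chart picture is secured, the sandwich argument closes the proof without further work.
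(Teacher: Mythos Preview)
Your proof is correct and follows essentially the same approach as the paper: both fix a complement $F_0$ of $E(\omega_0)$, use the chart map $T_\omega=\Proj{F_0}{E(\omega)}|_{E(\omega_0)}$ with $\|T_\omega\|\to 0$, and compare vectors in $E(\omega)$ with their $E(\omega_0)$-components. Your graph parametrization $\alpha_\omega(e_0)=e_0-T_\omega(e_0)$ and the resulting sandwich $\|A_\omega\|/(1+\|T_\omega\|)\le\|R(\omega)|_{E(\omega)}\|\le\|A_\omega\|/(1-\|T_\omega\|)$ is a slightly cleaner packaging of the same estimates the paper carries out by hand in two separate $\epsilon$--$\delta$ computations.
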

\begin{proof}
Let $\epsilon>0$ and let $\omega_0\in\Omega$. Choose a $\delta>0$ such
that $\delta+2\delta\|R(\omega_0)\|/(1-\delta)<\epsilon$ and
$2\delta\|R(\omega_0)\|+\delta<\epsilon$.

Fix an $F_0$ such that $E(\omega_0)\oplus F_0=X$. Choose a
neighbourhood $N$ of $\omega_0$ such that for $\omega\in N$,
$E(\omega)\oplus F_0=X$,
$\|\Proj{F_0}{E(\omega)}\vert_{E(\omega_0)}\|<\delta$ and
$\|R(\omega)-R(\omega_0)\|<\delta$.

Let $x\in \overline{B}_{E(\omega)}$, the closed unit ball of $E(\omega)$.
Since $E(\omega_0)\oplus F_0=X$, $x$ may be expressed uniquely as $a+b$
with $a\in E(\omega_0)$ and $b\in F_0$. We have $a=x-b$ so that
$\Proj{F_0}{E(\omega)}(a)=-b$ yielding $\|b\|<\delta\|a\|$. We have
$\|a\|\le \|x\|+\|b\|$ so that $\|a\|<1/(1-\delta)$.

We now have
\begin{align*}
  &\|R(\omega)x\|-\|R(\omega_0)((1-\delta)a)\|\\
  &\le
  \|R(\omega)x-R(\omega_0)((1-\delta)a)\|\\
  &\le \|R(\omega)x-R(\omega_0)x\|+\|R(\omega_0)(x-(1-\delta)a)\|\\
  &\le \|R(\omega)-R(\omega_0)\|\cdot\|x\|+\|R(\omega_0)\|\cdot\|b+\delta
  a\|\\
  &\le \delta+2\delta\|R(\omega_0)\|/(1-\delta)\le \epsilon.
\end{align*}
It follows that
$\|R(\omega)\vert_{E(\omega)}\|<\|R(\omega_0)\vert_{E(\omega_0)}\|+\epsilon$.

Conversely let $x\in \overline{B}_{E(\omega_0)}$. We have
$x=\Proj{F_0}{E(\omega)}x+\Proj{E(\omega)}{F_0}x$, which we write as
$c+d$. By assumption $\|c\|<\delta$ so that $\|d\|<1+\delta$. We have
\begin{align*}
  &\|R(\omega_0)x\|-\|R(\omega)d/(1+\delta)\|\\
  &\le \|R(\omega_0)x-R(\omega)d/(1+\delta)\|\\
  &\le \|R(\omega_0)x-R(\omega_0)d/(1+\delta)\|+
  \|R(\omega_0)d/(1+\delta)-R(\omega)d/(1+\delta)\|\\
  &\le \|R(\omega_0)\|\cdot\|x-d+\delta d/(1+\delta)\|+
  \|R(\omega_0)-R(\omega)\|\cdot\|d/(1+\delta)\|\\
  &\le 2\delta\|R(\omega_0)\|+\delta<\epsilon.
\end{align*}
It follows that
$\|R(\omega_0)\vert_{E(\omega_0)}\|<\|R(\omega)\vert_{E(\omega)}\|+\epsilon$,
which establishes the required continuity.
\end{proof}

\begin{lemma}\label{lem:sumcts}
Suppose that the map $V\colon\Omega\to \mathcal G(X)$ is continuous
and that there are elements $E_0$ and $F_0$ of the Grassmannian such
that $V(\omega_0)\oplus E_0\oplus F_0=X$. Then there is a
neighbourhood $N$ of $\omega_0$ such that on $N$, $\omega\mapsto
V(\omega)\oplus F_0$ is continuous.
\end{lemma}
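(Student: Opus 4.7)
The plan is to build a continuous family of bounded projections $P(\omega)\colon X\to X$ whose range is the fixed subspace $E_0$ and whose kernel is precisely the candidate subspace $V(\omega)\oplus F_0$; continuity of $P$ in the operator norm will then translate, via the Grassmannian chart around $V(\omega_0)\oplus F_0$ with complement $E_0$, into continuity of $\omega\mapsto V(\omega)\oplus F_0$ as a map into $\mathcal G(X)$.

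First I would shrink to a neighborhood where everything makes sense. Since $V(\omega_0)\oplus(E_0\oplus F_0)=X$, the subspace $V(\omega_0)$ lies in the open chart $U_{E_0\oplus F_0}$, so continuity of $V$ at $\omega_0$ yields a neighborhood $N$ on which $V(\omega)\oplus(E_0\oplus F_0)=X$. On $N$, applying Lemma \ref{lem:projcts} to the continuous family $V(\omega)$ and the constant family $E_0\oplus F_0$ shows that $Q(\omega):=\Proj{E_0\oplus F_0}{V(\omega)}$ depends continuously on $\omega$ in operator norm.

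Next, the decomposition $X=V(\omega_0)\oplus E_0\oplus F_0$ provides the fixed bounded projection $\pi:=\Proj{E_0}{V(\omega_0)\oplus F_0}$. Set $P(\omega):=\pi\circ Q(\omega)$. For $\omega\in N$ and $x=v+e+f$ with $v\in V(\omega)$, $e\in E_0$, $f\in F_0$, one computes $Q(\omega)x=e+f$ and $\pi(e+f)=e$, so $P(\omega)x=e$. Thus $P(\omega)$ is a bounded projection with range $E_0$ and kernel $V(\omega)+F_0$; moreover $V(\omega)\cap F_0\subseteq V(\omega)\cap(E_0\oplus F_0)=\{0\}$, so the kernel is in fact the algebraic direct sum $V(\omega)\oplus F_0$. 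The estimate $\|P(\omega)-P(\omega_0)\|\le\|\pi\|\cdot\|Q(\omega)-Q(\omega_0)\|$ gives operator-norm continuity of $P$.

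Finally I would read off continuity of the subspace. The kernel $W(\omega):=V(\omega)\oplus F_0=\ker P(\omega)$ is closed and complemented by $\ran P(\omega)=E_0$, hence lies in $\mathcal G(X)\cap U_{E_0}$. In the chart $\phi_{W(\omega_0),E_0}$, the image of $W(\omega)$ is $\Proj{E_0}{W(\omega)}\big|_{W(\omega_0)}=P(\omega)\big|_{W(\omega_0)}$, and restricting operators on $X$ to a fixed subspace is continuous in operator norm, so this is continuous in $\omega$. The main obstacle is simply spotting the correct factorization — that the projection onto $E_0$ with kernel $V(\omega)\oplus F_0$ can be written as the fixed projection $\pi$ post-composed with the continuous projection onto $E_0\oplus F_0$ killing $V(\omega)$. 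Once that is identified, the result reduces cleanly to Lemma \ref{lem:projcts} and the chart structure on $\mathcal G(X)$.
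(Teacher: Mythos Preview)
Your proof is correct and follows essentially the same route as the paper: both arguments hinge on the factorization $\Proj{E_0}{V(\omega)\oplus F_0}=\Proj{E_0}{F_0}\circ\Proj{E_0\oplus F_0}{V(\omega)}$ (your $\pi\circ Q(\omega)$), with continuity coming from the second factor. The only difference is packaging---you invoke Lemma~\ref{lem:projcts} to get operator-norm continuity of $Q(\omega)$ and then read off continuity from a single chart, whereas the paper fixes an arbitrary $\omega_1\in N$ and estimates the local norm $\|\Proj{E_0}{V(\omega)\oplus F_0}|_{V(\omega_1)\oplus F_0}\|$ by hand via the same three-factor decomposition.
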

\begin{proof}
Since $E_0\oplus F_0$ is a topological complementary subspace of
$V(\omega_0)$, by continuity there is a neighbourhood $N_1$ of $\omega$
such that for $\omega\in N_1$, $V(\omega)\oplus E_0\oplus F_0=X$. In
particular we see that $E_0$ is a topological complementary subspace to
$V(\omega)\oplus F_0$ for $\omega\in N_1$. Let $\omega_1\in N_1$ be
fixed. We need to establish that for $\omega$ sufficiently close to
$\omega_1$, $\|\Proj{E_0}{V(\omega)\oplus F_0}\vert_{V(\omega_1)\oplus
F_0}\|$ is small. We demonstrate this by writing the operator as the
composition of three parts: two of them bounded and the third one small.

Let $\epsilon>0$. We note that $\Proj{E_0}{V(\omega)\oplus F_0}
\vert_{F_0}$ is zero so that we can rewrite $\Proj{E_0}{V(\omega)\oplus
F_0}\vert_{V(\omega_1)\oplus F_0}$ as $\Proj{E_0}{V(\omega)\oplus
F_0}\vert_{V(\omega_1)}\circ \Proj{V(\omega_1)}{F_0}$. We further
decompose $\Proj{E_0}{V(\omega)\oplus F_0}$ as $\Proj{E_0}{F_0}\circ
\Proj{E_0\oplus F_0}{V(\omega)}$ so that
\begin{equation*}
\Proj{E_0}{V(\omega)\oplus F_0}\vert_{V(\omega_1)\oplus
F_0}=\Proj{E_0}{F_0}\circ \Proj{E_0\oplus
F_0}{V(\omega)}\vert_{V(\omega_1)}\circ\Proj{V(\omega_1)}{F_0}.
\end{equation*}
Since $V(\omega_1)\oplus F_0$ and $E_0\oplus F_0$ are topological direct
sums it follows that $C=\|\Proj{V(\omega_1)}{F_0}\|$ and
$C'=\|\Proj{E_0}{F_0}\|$ are finite. By continuity of $V(\omega)$ there
is a neighbourhood $N_2$ of $\omega_1$ on which $\|\Proj{E_0\oplus
F_0}{V(\omega)} \vert_{V(\omega_1)}\|<\epsilon/(CC')$. Multiplying the
norms we see that for $\omega\in N_2$, $\|\Proj{E_0}{V(\omega)\oplus
F_0}\|<\epsilon$ as required.
\end{proof}

\begin{lemma}\label{lem:sumcts2}
Let the maps $V\colon\Omega\to \mathcal G_d(X)$ and $W:\Omega\to \mathcal
G_{d'}(X)$ be continuous and suppose that $V(\omega)\cap W(\omega)=\{0\}$
for each $\omega\in\Omega$. Then the map $\omega\mapsto V(\omega)\oplus
W(\omega)$ is continuous.
\end{lemma}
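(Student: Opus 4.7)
My plan is to work directly in a chart of the Grassmannian at the point $V(\omega_0) \oplus W(\omega_0)$. Fix $\omega_0 \in \Omega$. Since $V(\omega_0)$ and $W(\omega_0)$ are finite-dimensional with trivial intersection, $V(\omega_0) + W(\omega_0)$ is a $(d+d')$-dimensional closed (and hence complemented) subspace of $X$. I would choose any $F_0 \in \mathcal{G}(X)$ with $V(\omega_0) \oplus W(\omega_0) \oplus F_0 = X$, and then show that for $\omega$ close to $\omega_0$, $V(\omega) + W(\omega) \in U_{F_0}$ and the corresponding local norm tends to $0$.

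The continuity of $V$ at $\omega_0$ in the chart with complement $W(\omega_0) \oplus F_0$ gives, on a neighbourhood of $\omega_0$, a graph representation $V(\omega) = \{v + \phi_\omega(v) : v \in V(\omega_0)\}$ where $\phi_\omega\colon V(\omega_0) \to W(\omega_0) \oplus F_0$ satisfies $\|\phi_\omega\| \to 0$; symmetrically, $W(\omega) = \{w + \psi_\omega(w) : w \in W(\omega_0)\}$ for a small $\psi_\omega\colon W(\omega_0) \to V(\omega_0) \oplus F_0$. Splitting targets as $\phi_\omega = \phi_\omega^W + \phi_\omega^F$ and $\psi_\omega = \psi_\omega^V + \psi_\omega^F$, a generic element of $V(\omega) + W(\omega)$ reads
\begin{equation*}
\bigl(v + \psi_\omega^V(w)\bigr) + \bigl(w + \phi_\omega^W(v)\bigr) + \bigl(\phi_\omega^F(v) + \psi_\omega^F(w)\bigr) \in V(\omega_0) \oplus W(\omega_0) \oplus F_0,
\end{equation*}
with each of $\|\phi_\omega^W\|,\|\phi_\omega^F\|,\|\psi_\omega^V\|,\|\psi_\omega^F\|$ tending to $0$ with $\omega$.

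The crux of the argument is then the finite-dimensional endomorphism $A_\omega(v,w) = \bigl(v + \psi_\omega^V(w),\, w + \phi_\omega^W(v)\bigr)$ of $V(\omega_0) \oplus W(\omega_0)$. As $A_\omega = I + T_\omega$ with $\|T_\omega\| \to 0$, it is invertible with uniformly bounded inverse on a neighbourhood $N$ of $\omega_0$. This invertibility shows simultaneously that $V(\omega) + W(\omega)$ meets $F_0$ only at $0$ and projects onto all of $V(\omega_0) \oplus W(\omega_0)$, so $V(\omega) + W(\omega)$ is complementary to $F_0$ (hence lies in $U_{F_0}$) and is the graph of the linear map $(v_0,w_0) \mapsto \phi_\omega^F(v) + \psi_\omega^F(w)$ with $(v,w) = A_\omega^{-1}(v_0,w_0)$, whose norm is at most $\bigl(\|\phi_\omega^F\| + \|\psi_\omega^F\|\bigr)\|A_\omega^{-1}\|$ and therefore tends to $0$. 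The only real obstacle is the bookkeeping required to set up and decompose the two graph representations correctly; once that is done, the invertibility of $A_\omega$ together with the resulting norm estimate delivers the required continuity immediately.
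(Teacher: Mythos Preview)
Your argument is correct. Working in the chart $U_{F_0}$ and parametrizing $V(\omega)$ and $W(\omega)$ as graphs over $V(\omega_0)$ and $W(\omega_0)$ is a legitimate and self-contained way to prove the lemma; the invertibility of the near-identity map $A_\omega$ on the finite-dimensional space $V(\omega_0)\oplus W(\omega_0)$ does exactly the work you claim, simultaneously giving transversality to $F_0$ and a small graph map.

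This is, however, a genuinely different route from the paper's proof. The paper does not introduce graph coordinates or the perturbation $A_\omega$; instead it stays at the level of projections and factorises
\[
\Proj{F_0}{V(\omega)\oplus W(\omega)}\big|_{V(\omega_0)}
=\Proj{F_0}{W(\omega)}\circ\Proj{F_0\oplus W(\omega)}{V(\omega)}\big|_{V(\omega_0)},
\]
then invokes the preceding Lemmas \ref{lem:projcts}, \ref{lem:restnormcts} and \ref{lem:sumcts} to show that the second factor has small norm while the first remains bounded near $\omega_0$ (with the symmetric argument on $W(\omega_0)$). Your approach is more elementary in that it avoids those auxiliary lemmas entirely and makes the finite-dimensionality do all the work via $A_\omega^{-1}$; the paper's approach, by contrast, is modular and re-uses the projection machinery already developed. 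Both yield the same local-norm estimate in $U_{F_0}$, just via different bookkeeping.
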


\begin{proof}
Fix $\omega_0\in\Omega$. Let $F_0$ be a topological complement of
$V(\omega_0)\oplus W(\omega_0)$ (this exists as all finite-dimensional
subspaces have a topological complement). In order to demonstrate
continuity we need to show that $\Proj{F_0}{V(\omega)\oplus
W(\omega)}\vert_{V(\omega_0)\oplus W(\omega_0)}$ has small norm. Since
$V(\omega_0)\oplus W(\omega_0)$ is a topological direct sum it is
sufficient to show that $\Proj{F_0}{V(\omega)\oplus
W(\omega)}\vert_{V(\omega_0)}$ is of small norm with a similar result for
the restriction to $W(\omega_0)$. We write
\begin{equation*}
\Proj{F_0}{V(\omega)\oplus W(\omega)}\vert_{V(\omega_0)}=
\Proj{F_0}{W(\omega)}\circ\Proj{F_0\oplus
W(\omega)}{V(\omega)}\vert_{V(\omega_0)}.
\end{equation*}

We have from Lemmas \ref{lem:projcts} and \ref{lem:sumcts} that
$\omega\mapsto \Proj{F_0\oplus W(\omega)}{V(\omega)}$ is continuous in a
neighbourhood of $\omega_0$ and from Lemma \ref{lem:restnormcts} that
$\omega\mapsto \|\Proj{F_0+W(\omega)}{V(\omega)}\vert_{V(\omega_0)}\|$ is
continuous on this neighbourhood. Since $\|\Proj{F_0+W(\omega)}{V(\omega)}\vert_{V(\omega_0)}\|=0$ when
$\omega=\omega_0$, it follows that this norm is arbitrarily small for
$\omega$ in a neighbourhood of $\omega_0$.

It remains to show that $\|\Proj{F_0}{W(\omega)}\|$ remains bounded on a
neighbourhood of $\omega_0$. To see this we note from Lemma
\ref{lem:projcts} that $\Proj{F_0\oplus V(\omega_0)}{W(\omega)}$ is
continuous on a neighbourhood of $\omega_0$ and $\Proj{F_0}{V(\omega_0)}$
is a bounded operator since $F_0\oplus V(\omega_0)$ is a topological
direct sum. Composing these two operators gives the required result.
\end{proof}

\begin{lemma}\label{lem:ctsbasis}
Let $X$ be a Banach space, $K$ a compact metrizable space and let $E:K\to
\mathcal{G}_d(X)$ be a continuous map. Let $\mathbb P$ be a finite
measure on $K$. Then there exists an open and dense measurable subset $U$
of $K$ with full $\mathbb{P}$-measure and maps $e_1,\ldots,e_d:K\to X$
with $e_i|U$ continuous, $i=1,\ldots,d$ such that for each $\omega\in U$,
$e_1(\omega),\ldots,e_d(\omega)$ is a basis for $E(\omega)$.

Furthermore, the basis can be chosen so that for each $\omega\in U$ and
all $a\in \mathbb{R}^d$,
\begin{align*}
\|a\|_2 \leq \left\|\sum_{i=1}^d a_i e_i(\omega)\right\| \leq 4 \sqrt{d}
\|a\|_2.
\end{align*}
\end{lemma}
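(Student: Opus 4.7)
The plan is to build the basis locally using the Grassmannian chart structure and the continuity results of Lemmas \ref{lem:projcts}--\ref{lem:sumcts2}, and then to patch finitely many local bases together using compactness of $K$ and a measure-zero-boundary trick.

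\emph{Local construction.} Fix $\omega_0\in K$. Since $E(\omega_0)$ is a $d$-dimensional normed space, John's theorem produces a basis $\tilde e_1,\ldots,\tilde e_d$ of $E(\omega_0)$ satisfying
\begin{equation*}
\|a\|_2 \leq \left\|\sum_{i=1}^d a_i\tilde e_i\right\| \leq \sqrt{d}\,\|a\|_2 \qquad (a\in\mathbb{R}^d);
\end{equation*}
in particular each $\|\tilde e_i\|\leq\sqrt{d}$. Fix any topological complement $F_0$ of $E(\omega_0)$; continuity of $E$ together with Lemma \ref{lem:sumcts} gives $E(\omega)\oplus F_0=X$ in a neighbourhood of $\omega_0$, and Lemma \ref{lem:projcts} then shows $\omega\mapsto R(\omega):=\Proj{E(\omega)}{F_0}$ is continuous there in operator norm. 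Set $e_i(\omega):=2R(\omega)\tilde e_i$. Since $R(\omega_0)$ restricts to the identity on $E(\omega_0)$ we have $e_i(\omega_0)=2\tilde e_i$ and $\|e_i(\omega)-2\tilde e_i\|\leq 2\sqrt{d}\,\|R(\omega)-R(\omega_0)\|$, so on an open $B_{\omega_0}$ where $\|R(\omega)-R(\omega_0)\|\leq 1/(2d)$ the Cauchy--Schwarz estimate $\sum_i|a_i|\leq\sqrt{d}\,\|a\|_2$ and the triangle inequality yield
\begin{equation*}
\|a\|_2 \leq \left\|\sum_{i=1}^d a_i e_i(\omega)\right\| \leq 4\sqrt{d}\,\|a\|_2 \qquad (\omega\in B_{\omega_0},\ a\in\mathbb{R}^d),
\end{equation*}
and in particular $e_1(\omega),\ldots,e_d(\omega)$ is a basis of $E(\omega)$ throughout $B_{\omega_0}$.

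\emph{Patching.} Since $K$ is metrizable I may further refine $B_{\omega_0}$ to an open metric ball whose topological boundary has $\mathbb{P}$-measure zero: the concentric spheres about a fixed centre are pairwise disjoint, so only countably many can carry positive $\mathbb{P}$-mass. Compactness of $K$ then yields a finite subcover $B_1,\ldots,B_N$ equipped with local bases $\{e_i^{(j)}\}_{i=1}^d$. Form the disjoint open sets
\begin{equation*}
W_j := B_j \setminus \overline{B_1\cup\cdots\cup B_{j-1}} \qquad (j=1,\ldots,N),
\end{equation*}
and let $U:=\bigcup_j W_j$. A minimality argument shows $K\setminus U\subseteq\bigcup_j\partial B_j$, which is a finite union of nowhere dense $\mathbb{P}$-null sets; hence $U$ is open, dense, Borel, and of full $\mathbb{P}$-measure. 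Defining $e_i(\omega):=e_i^{(j)}(\omega)$ for $\omega\in W_j$ and $e_i(\omega):=0$ for $\omega\notin U$ produces Borel measurable maps $K\to X$ that are continuous on $U$ (the $W_j$ are disjoint open sets and each $e_i^{(j)}$ is continuous on $W_j$) and that satisfy the stated basis and norm bounds throughout $U$.

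\emph{Main obstacle.} The delicate point is the explicit constant $4\sqrt{d}$: an Auerbach basis would only give distortion of order $d$ after the analogous rescaling, which is insufficient for large $d$, so one really wants the $\sqrt{d}$ distortion furnished by John's theorem, and the factor $4$ is just enough slack to simultaneously absorb the rescaling by $2$ and the perturbation $\|R(\omega)-R(\omega_0)\|$ which degrades the local basis as $\omega$ moves away from $\omega_0$.
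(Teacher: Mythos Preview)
Your proof is correct and follows essentially the same approach as the paper: local John-basis pushed to nearby fibres by the projection $\Proj{E(\omega)}{F_0}$ (continuous by Lemma~\ref{lem:projcts}), then patched via a finite cover by metric balls with $\mathbb{P}$-null boundaries. The only differences are cosmetic: you make the neighbourhood-shrinking step quantitative (an explicit bound on $\|R(\omega)-R(\omega_0)\|$), and you pick null-boundary radii \emph{before} extracting the finite subcover, which lets you bypass the Lebesgue-number argument the paper uses; your $W_j=B_j\setminus\overline{B_1\cup\cdots\cup B_{j-1}}$ coincides with the paper's $\mathrm{int}(D_j)$. One small quibble: the reference to Lemma~\ref{lem:sumcts} for the statement that $E(\omega)\oplus F_0=X$ near $\omega_0$ is not quite apt---that is simply the openness of the Grassmannian chart $U_{F_0}$---but the claim itself is correct.
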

\begin{proof}
Given $\omega_0\in K$, there exists $F_{\omega_0}\in\mathcal{G}^d(X)$
such that $E(\omega_0)\oplus F_{\omega_0} = X$. By continuity of
$E(\omega)$, there exists an open neighourhood $U_{\omega_0}$ of
$\omega_0$ such that $E(\omega)\oplus F_{\omega_0}=X$ for all $\omega\in
U_{\omega_0}$. By a theorem of F.~John (see
\cite[Chapter 4 Theorem 15]{Bollobas} for example), there exists a basis
$v_1,\ldots,v_d$ for $E(\omega_0)$ satisfying
\begin{align}
2\|a\|_2 \leq \left\|\sum_{i=1}^d a_i v_i \right\| \leq 2 \sqrt{d} \|a\|_2,
\end{align}
for all $a\in\mathbb{R}^d$, where $\|a\|_2=(\sum_{i=1}^d a_i^2)^{1/2}$ is
the Euclidean norm on $\mathbb{R}^d$. Define
$e^{\omega_0}_i:U_{\omega_0}\to X$ for each $i=1,\ldots,d$, by setting
$e^{\omega_0}_i(\omega) = \Proj{E(\omega)}{F_{\omega_0}}v_i$. Notice that
these vectors depend continuously on $\omega$ by Lemma \ref{lem:projcts}.
Replacing $U_{\omega_0}$ by a smaller neighbourhood of $\omega_0$ if
necessary, we may assume that for all $\omega\in U_{\omega_0}$ and
$a\in\mathbb{R}^d$,
\begin{align*}
\|a\|_2 \leq \left\|\sum_{i=1}^d a_i e^{\omega_0}_i(\omega)\right\| \leq
4 \sqrt{d} \|a\|_2.
\end{align*}
It follows that the vectors are linearly independent and hence form a
basis for $E(\omega)$.

We have that $\{U_\omega:\omega\in K\}$ is an open cover of $K$. Let
$\rho$ be a metric on $K$ compatible with the topology and let $\delta>0$
be the Lebesgue number of the cover: that is, for every $0<r<\delta$ and
$\omega\in K$, there exists $\omega'\in\Omega$ such that $B_r(\omega)$,
the open ball of radius $r$ centred at $\omega$, is contained in
$U_{\omega'}$. Fix $0<r_0<\delta$ and consider the open cover
$\{B_{r_0}(\omega):\omega\in K\}$ of $K$. By compactness, we have a
finite subcover $\{B_{r_0}(\omega_i):i=1,\ldots,k\}$. For each
$i=1,\ldots,k$, the collection $\{\partial B_r(\omega_i):r_0<r<\delta \}$
is an uncountable family of pairwise disjoint sets (contained in the
sphere of radius $r$ about $\omega_i$), and so there exists
$r\in(r_0,\delta)$ such that $\mathbb{P}(\partial B_{r}(\omega_i))=0$ for
each $i$.

We have that $\{B_i:=B_r(\omega_i):i=1,\ldots,k\}$ is a cover of $K$ by
open sets whose boundaries have zero $\mathbb{P}$-measure. These sets
have the additional property that for each $i=1,\ldots,k$, there exists
$\omega_i'\in K$ such that $B_i\subset U_{\omega_i'}$. Set
$D_i=B_i\backslash \bigcup_{j<i}B_j$ and let
$U=\bigcup_{i=1}^k\text{int}(D_i)$. Since $K\backslash
U\subset\bigcup_{i=1}^k\partial B_i$, $U$ is an open dense set of full
$\mathbb{P}$-measure and $\{D_i:i=1,\ldots,k\}$ is a partition of $K$.
Setting $e_i(\omega) = e^{\omega_j'}_i(\omega)$ for $\omega\in D_j$, for
each $i=1,\ldots,d$ and $j=1,\ldots,k$ gives maps with the required
properties.
\end{proof}

\section{Oseledets splitting}\label{sec:thieullen}

Thieullen \cite{Thieullen} in his work on multiplicative ergodic
theorems for operators introduced a framework on which this paper
will be based. A key notion introduced in that paper is $\mathbb
P$-continuity.

\begin{definition} \label{def:Pcont} For a topological space
$\Omega$, equipped with a Borel probability $\mathbb{P}$, a mapping
$f$ from $\Omega$ to a topological space $Y$ is said to be
$\mathbb P$-\textsl{continuous} if $\Omega$ can be expressed as a
countable union of Borel sets such that the restriction of $f$
to each is continuous.
\end{definition}

\begin{remark}
\label{rem:Pcont}
As noted in \cite{Thieullen},
if $\Omega$ is homeomorphic to a Borel subset of a separable complete
metric space, then a function $f:\Omega\to Y$ is $\mathbb
P$-continuous if and only if there exists a sequence $(K_n)_{n\geq 0}$ of
pairwise disjoint compact subsets of $X$ such that $\mu(\bigcup_{n\geq 0}
K_n)=1$ and the restriction $f|_{K_n}$ is continuous for each $n\geq 0$.
\end{remark}

We shall call a Lyapunov filtration or Oseledets splitting
$\mathbb{P}$-continuous if all of the exponents and all maps into the
Grassmannian are $\mathbb{P}$-continuous (with respect to the topology
defined in Section \ref{sec:grass} in the case of maps into the
Grassmannian).

\begin{remark}
If $\mathbb{P}$ is a \emph{Radon} measure on $\Omega$ (that is, locally
finite and tight) and $Y$ is a metric space, then a map $f:\Omega\to Y$
is $\mathbb{P}$-continuous if and only if it is measurable (see
\cite{Fremlin}). In particular, this is the case in the `Polish noise'
setting (see, for example, \cite{KiferLiu,Arbieto}), where $\Omega$ is a
separable topological space with a complete metric, $\mathcal F$ is the
Borel sigma-algebra and $\mathbb P$ is any Borel probability.
\end{remark}

Consider a random dynamical system $\mathcal
R=(\Omega,\mathcal{F},\mathbb{P},\sigma,X,\mathcal L)$. If $\sigma$ is
invertible with a measurable inverse, we say $\mathcal{R}$ has an
\emph{invertible base}. If $\Omega$ is a Borel subset of a complete
separable metric space, $\mathcal{F}$ is the Borel sigma-algebra and
$\sigma$ is continuous (or a homeomorphism), we say $\mathcal{R}$ has a
\emph{continuous (or homeomorphic) base}.

Suppose $\mathcal R$ is a random dynamical system with a homeomorphic base.
Provided $\omega\mapsto \mathcal L_\omega$ is $\mathbb{P}$-continuous we
see that $\omega\mapsto\|\mathcal L^{(n)}_\omega\|$ is
$\mathbb{P}$-continuous and hence $\mathcal F$-measurable. We shall assume
throughout that $\int \log^+ \| \mathcal
L_\omega\|\,\mathrm{d}\mathbb{P}(\omega)<\infty$. Since $\log\|\mathcal
L^{(n)}_\omega\|$ is a subadditive sequence of functions it follows
from the subadditive ergodic theorem that for almost every $\omega$,
$\frac1n\log\|\mathcal L^{(n)}_\omega\|$ is convergent and hence the
quantity $\lambda(\omega)$ defined in \eqref{eqn:lambda} may be
re-expressed as
\begin{equation*}
  \lambda(\omega)=\lim_{n\to\infty}(1/n)\log \|\mathcal L^{(n)}_\omega\|.
\end{equation*}
The boundedness of $\int\log^+\|\mathcal
L_\omega\|\,\mathrm{d}\mathbb{P}(\omega)$ ensures that $\lambda(\omega)$
is finite $\mathbb P$-almost everywhere.

\begin{proposition}\label{prop:topexponent}
For each $\omega\in\Omega$, we have $\sup \Lambda(\omega) =
\lambda(\omega)$.
\end{proposition}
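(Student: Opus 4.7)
The plan is to prove the two inequalities separately, using a Baire category argument for the nontrivial direction.

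The easy direction $\sup\Lambda(\omega)\le\lambda(\omega)$ is immediate: for any $v\in X$ and $n\ge1$ we have $\|\mathcal L^{(n)}_\omega v\|\le\|\mathcal L^{(n)}_\omega\|\cdot\|v\|$, so taking $(1/n)\log$ and $\limsup$ gives $\lambda(\omega,v)\le\lambda(\omega)$.

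For the reverse inequality I argue by contradiction. Suppose $\sup\Lambda(\omega)<\lambda(\omega)$, and pick any $\gamma$ with $\sup\Lambda(\omega)<\gamma<\lambda(\omega)$. For each integer $N\ge1$ define
\begin{equation*}
A_N = \{\,v\in X : \|\mathcal L^{(n)}_\omega v\|\le e^{n\gamma}\text{ for all }n\ge N\,\}.
\end{equation*}
Each $A_N$ is closed as an intersection of preimages of closed sets under the continuous maps $v\mapsto\|\mathcal L^{(n)}_\omega v\|$. By assumption, every $v\in X$ satisfies $\lambda(\omega,v)<\gamma$, i.e.\ $\limsup_n (1/n)\log\|\mathcal L^{(n)}_\omega v\|<\gamma$, so there exists $N=N(v)$ with $v\in A_N$. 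Hence $X=\bigcup_{N\ge1}A_N$.

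Since $X$ is a Banach space, the Baire Category Theorem yields some $N_0$ for which $A_{N_0}$ has nonempty interior: there is $v_0\in A_{N_0}$ and $r>0$ with $v_0+r\overline{B}_X\subset A_{N_0}$. For any $w$ with $\|w\|\le 1$ and any $n\ge N_0$, both $v_0+rw$ and $v_0$ lie in $A_{N_0}$, so
\begin{equation*}
\|\mathcal L^{(n)}_\omega(rw)\|\le\|\mathcal L^{(n)}_\omega(v_0+rw)\|+\|\mathcal L^{(n)}_\omega v_0\|\le 2e^{n\gamma}.
\end{equation*}
Taking the supremum over the unit ball gives $\|\mathcal L^{(n)}_\omega\|\le(2/r)e^{n\gamma}$ for all $n\ge N_0$, whence $\lambda(\omega)\le\gamma$, contradicting $\gamma<\lambda(\omega)$. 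This handles the case $\lambda(\omega)<+\infty$; if $\lambda(\omega)=+\infty$ the same Baire argument (choosing any finite $\gamma>\sup\Lambda(\omega)$) still produces a finite upper bound for $\lambda(\omega)$, again a contradiction, while if $\lambda(\omega)=-\infty$ the easy direction already forces $\sup\Lambda(\omega)=-\infty$.

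The only subtle step is the translation from the pointwise limsup condition $\lambda(\omega,v)<\gamma$ to the uniform-in-$n$ bound needed for membership in $A_N$; once one uses the characterisation of $\limsup<\gamma$ via eventual inequalities, the Baire argument is routine, and I expect no further obstacle.
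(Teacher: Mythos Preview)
Your proof is correct and follows essentially the same Baire Category approach as the paper. The only cosmetic difference is the parametrisation of the exhausting sets: the paper uses $A_N=\{v:\|\mathcal L^{(n)}_\omega v\|\le Ne^{nr}\ \forall n\}$ (varying the multiplicative constant) and argues directly that $\lambda(\omega)\le r$ for every $r>\sup\Lambda(\omega)$, whereas you use $A_N=\{v:\|\mathcal L^{(n)}_\omega v\|\le e^{n\gamma}\ \forall n\ge N\}$ (varying the threshold index) and argue by contradiction; both lead to the same norm bound via the same triangle-inequality step.
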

\begin{proof}
Clearly $\sup \Lambda(\omega) \leq \lambda(\omega)$, so we show that
$\sup \Lambda(\omega) \geq \lambda(\omega)$. Fix $\omega\in\Omega$, let
$r> \sup_{v\in X} \lambda(\omega,v)$. Set $A_N=\{v\in X: \|\mathcal
L^{(n)}_\omega v\| \leq N e^{nr},\forall n\in\mathbb{N}\}$. The set $A_N$
is closed, and by the choice of $r$, we have $\bigcup_{N\in \mathbb{N}}
A_N = X$ for each $\omega\in\Omega$. Thus by the Baire Category Theorem,
there exists an $A_N$ containing an interior point $u$. Let $\delta>0$ be
small enough that $B_\delta(u)\subset A_N$. For any $v\in
B_\delta(0)$ and $n>0$, we have $\|\mathcal L^{(n)}_\omega
v\|\leq \|\mathcal L^{(n)}_\omega(v-u)\| +\|\mathcal L^{(n)}_\omega u\|$.
So $\|\mathcal L^{(n)}_\omega\|\leq (2N/\delta) e^{nr}$, and hence
$\lambda(\omega) \leq r$. Since $r$ is an arbitrary quantity greater than
$\Lambda(\omega)$, the result follows.
\end{proof}

We concentrate on the setting
in which $\sigma$ is ergodic. The function $\lambda(\omega)$ is then
constant along orbits, and thus essentially constant. We denote by
$\lambda^*\in\mathbb{R}$ the constant satisfying
$\lambda(\omega)=\lambda^*$ for almost every $\omega\in\Omega$.

A second key concept introduced by Thieullen is that of the index of
compactness of a random composition of operators.
For a bounded operator $A$, $\|A\|_\text{ic}$ is defined to be the
infimal $r$ such that $A(B_X)$ may be covered by a finite number
of $r$-balls, where $B_X$ is the unit ball in $X$.
We have $\|AA'\|_\text{ic}\le
\|A\|_\text{ic}\|A'\|_\text{ic}$ for any bounded linear operators on
$X$. One can check that $|\|A\|_\text{ic}-\|A'\|_\text{ic}|\le
\|A-A'\|$ so that $\|A\|_\text{ic}$ is a continuous function of the
operator. In particular for each $n$, $\omega\mapsto \|\mathcal
L^n_\omega\|_\text{ic}$ is $\mathbb{P}$-continuous and hence $\mathcal
F$-measurable. By sub-additivity we have $(1/n)\log \|\mathcal
L^{(n)}_\omega\|_\text{ic}$ is convergent.

\begin{definition}
\label{def:ic} The limit $\kappa(\omega):=\lim_{n\to\infty}(1/n)\log
\|\mathcal L^{(n)}_\omega\|_\text{ic}$ is called the \textsl{index
of compactness} of the random composition of operators.
\end{definition}
 Since $\kappa(\omega)$ is $\sigma$-invariant it is
equal almost everywhere to a constant which we call $\kappa^*$.

\begin{theorem}[Thieullen \cite{Thieullen}]\label{thm:Thieullen}
Let $\mathcal{R}=(\Omega,\mathcal{F},\mathbb{P},\sigma,X,\mathcal L)$ be
a random dynamical system with an ergodic continuous base, and suppose
$\omega\mapsto\mathcal L_\omega$ is $\mathbb{P}$-continuous, and that
$\int \log^+\|\mathcal L_\omega\|\,\mathrm{d}\mathbb{P} <+\infty$. If
$\kappa^*<\lambda^*$, then $\mathcal{R}$ is quasi-compact, with
$\alpha(\omega) =\kappa^*$ a.\,e., and admits a $\mathbb{P}$-continuous
Lyapunov filtration.

Moreover, if the base is invertible and $\mathcal L_\omega$ is injective
a.\,e., then $\mathcal{R}$ admits a $\mathbb{P}$-continuous Oseledets
splitting.
\end{theorem}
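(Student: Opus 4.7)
The plan is to first apply Thieullen's theorem (Theorem~\ref{thm:Thieullen}) to extract the $\mathbb{P}$-continuous Lyapunov filtration $\{V_i(\omega)\}_{i=1}^p$ and then to upgrade this filtration to a splitting by exhibiting, for each exceptional exponent $\lambda_i$, a $d_i$-dimensional equivariant $\mathbb{P}$-continuous complement $E_i(\omega)$ of $V_{i+1}(\omega)$ inside $V_i(\omega)$. The construction will be carried out by a ``graph-transform from the past'' argument, taking advantage of the invertibility of the base: fix a $\mathbb{P}$-continuous family of reference complements $H_i(\omega)$ of $V_{i+1}(\omega)$ in $V_i(\omega)$ (produced by invoking Lemma~\ref{lem:ctsbasis} fibrewise, after restricting to a compact set from Remark~\ref{rem:Pcont}), and define
\begin{equation*}
E_i^{(n)}(\omega) := \mathcal{L}^{(n)}_{\sigma^{-n}\omega}\bigl(H_i(\sigma^{-n}\omega)\bigr).
\end{equation*}
The candidate Oseledets subspace is the limit $E_i(\omega) = \lim_{n\to\infty} E_i^{(n)}(\omega)$ in the Grassmannian $\mathcal{G}_{d_i}(X)$.

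To justify this, I would first verify that each $E_i^{(n)}(\omega)$ is again a $d_i$-dimensional complement of $V_{i+1}(\omega)$ in $V_i(\omega)$. This rests on the observation that the cocycle induced on the finite-dimensional quotient bundle $V_i(\omega)/V_{i+1}(\omega)$ is fibrewise injective: if $\mathcal{L}_\omega v \in V_{i+1}(\sigma\omega)$ then by property (iv) of Lyapunov exponents $\lambda(\omega,v)\le \lambda_{i+1}$, forcing $v \in V_{i+1}(\omega)$. Parametrising a nearby complement $E$ of $V_{i+1}(\omega)$ as the graph of a linear map $A:H_i(\omega)\to V_{i+1}(\omega)$ over $H_i(\omega)$, the passage $E_i^{(n)}\mapsto E_i^{(n+1)}$ becomes an affine graph transform. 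Its contraction rate is controlled by the gap between the expansion rate $\lambda_i$ on the quotient (which is preserved by the induced cocycle, since the quotient cocycle is injective and conformal on a $d_i$-dimensional bundle) and the growth rate on $V_{i+1}$, which is at most $\lambda_{i+1}$ if $i<p$ and at most $\kappa^*$ in the tail. The resulting bound $\|A_{n+1}-A_n\|_{(H_i,V_{i+1})} \le C(\omega)e^{(\lambda_{i+1}-\lambda_i)n}$ gives a Cauchy sequence in the local norm \eqref{eqn:locnorm}, and the limit $E_i(\omega)$ is equivariant by a one-step reindexing of the defining formula. Property~(7) of the Oseledets splitting then follows because vectors in $E_i(\omega)\setminus\{0\}$ project to nonzero vectors in the quotient, on which the exponent is exactly $\lambda_i$.

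For $\mathbb{P}$-continuity I would restrict to a compact set $K_n$ on which all ingredients are continuous (invoking Remark~\ref{rem:Pcont}), observe that each $E_i^{(m)}$ is then continuous by Lemmas~\ref{lem:projcts}--\ref{lem:sumcts2}, and use a further Egoroff/tempering decomposition to make the Cauchy estimate uniform on a subset of arbitrarily large measure, so that continuity passes to the limit $E_i$. Uniqueness is essentially automatic from the contraction: any other $\mathbb{P}$-continuous equivariant complement $E_i'$ would correspond to a graph $B(\omega)$ over $E_i(\omega)$ satisfying a cohomological equation in which one side contracts at rate $e^{(\lambda_{i+1}-\lambda_i)n}$, forcing $B\equiv 0$. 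The main obstacle is the tempering step just described: in infinite dimensions the operator norm of $\mathcal{L}^{(n)}_\omega|_{V_{i+1}(\omega)}$ may overshoot the pointwise exponent $\lambda_{i+1}$, and one must modify the norms fibrewise (à la Pesin/Lyapunov norms) to recover the gap $\lambda_i - \lambda_{i+1}$ in operator norm. A secondary obstacle is the case $p=\infty$, where exponents accumulate at $\kappa^*$: the construction must be performed one exponent at a time, and the compatibility with the Thieullen filtration (notably that $\bigcap_i V_i = \mathcal{V}_{\alpha(\omega)}$) must be checked so that no exceptional direction is lost into the slow part.
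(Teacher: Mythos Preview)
There is a mismatch: the statement you were given is Thieullen's theorem, which the paper does \emph{not} prove --- it is cited from \cite{Thieullen} and used as a black box. Your proposal is not a proof of Thieullen's theorem at all; you begin by \emph{invoking} Thieullen's theorem to obtain the filtration, and then upgrade to a splitting without assuming injectivity of the $\mathcal L_\omega$. What you have written is a proof sketch for the paper's Main Theorem (Theorem~\ref{thm:main}), not for Theorem~\ref{thm:Thieullen}.

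Interpreting your proposal as an attempt at Theorem~\ref{thm:main}, your route is genuinely different from the paper's. The paper does not build $E_i(\omega)$ by pushing forward reference complements $H_i(\sigma^{-n}\omega)$ and proving Cauchy convergence via graph-transform estimates. Instead it follows Thieullen's own reduction: it embeds $X$ into the sequence space $\tilde X=\{(v_n)_{n\ge 0}:\sup_n\|v_n\|<\infty\}$ on which the extended generator $\tilde{\mathcal L}_\omega$ is injective by construction, applies the injective case of Thieullen's theorem on $\tilde X$ to obtain Oseledets subspaces $\tilde E_i(\omega)$, and then defines $E_i(\omega)=\pi(\tilde E_i(\omega))$ via the zeroth-coordinate projection. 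All that remains is to check that $\pi$ preserves dimension on $\tilde E_i$ and that $\mathbb P$-continuity survives projection (Claim~\ref{clm:existence}, using Lemma~\ref{lem:ctsbasis}). This completely sidesteps the tempering obstacle you flag: there is no need to control $\|\mathcal L^{(n)}_\omega|_{V_{i+1}(\omega)}\|$ against the quotient expansion, because the contraction analysis has already been done by Thieullen in the injective setting. For uniqueness the paper also argues differently: rather than your cohomological contraction on graphs, Claim~\ref{clm:uniqueness} shows that the norm $g(\omega)=\|\Proj{V_{i+1}(\omega)}{E_i(\omega)\oplus F_i(\omega)}|_{E_i'(\omega)}\|$ tends to $0$ along forward orbits but has $\liminf\ge 1$ along backward orbits (via Lemma~\ref{lem:fullorbits}), contradicting Poincar\'e recurrence.

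Your approach is in principle viable --- it is essentially the finite-dimensional strategy of \cite{FLQ} transplanted to Banach spaces --- but the obstacles you name (Pesin-type tempering of norms on the infinite-dimensional $V_{i+1}$, and uniform control when $p=\infty$) are real and nontrivial, and you have only gestured at them. The paper's extension trick is shorter precisely because it outsources those difficulties to Thieullen.
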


Our main result in this article 
is the extension of
Thieullen's theorem to show that one obtains an Oseledets splitting in
Thieullen's setting without making the assumption of invertibility of the
$\mathcal L_\omega$.

\begin{theorem}\label{thm:main}
Let $\Omega$ be a Borel subset of a separable complete metric space,
$\mathcal{F}$ the Borel sigma-algebra and $\mathbb{P}$ a Borel
probabilty. Let $X$ be a Banach space and consider a random dynamical
system $\mathcal{R}=(\Omega,\mathcal{F},\mathbb{P},\sigma,X,\mathcal L)$
with base transformation $\sigma:\Omega\to\Omega$ an ergodic
homeomorphism, and suppose that the generator $\mathcal L:\Omega\to
L(X,X)$ is $\mathbb{P}$-continuous and satisfies
$$
\int \log^+\|\mathcal L_\omega\|\,\mathrm{d}\mathbb{P} <+\infty.
$$
If $\kappa^*<\lambda^*$ for almost every $\omega$, then $\mathcal{R}$ is
quasi-compact and admits a unique $\mathbb{P}$-continuous Oseledets
splitting.
\end{theorem}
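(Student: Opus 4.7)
The plan is to build on Thieullen's Theorem~\ref{thm:Thieullen}, which already supplies a $\mathbb{P}$-continuous Lyapunov filtration $(V_i(\omega))_{i=1}^p$ under our hypotheses; what remains is to produce, for each $i$, an equivariant $\mathbb{P}$-continuous $d_i$-dimensional complement $E_i(\omega)$ of $V_{i+1}(\omega)$ in $V_i(\omega)$, and to prove uniqueness. The strategy mirrors the finite-dimensional argument of \cite{FLQ}: since the base $\sigma$ is invertible, we can pull back to $\sigma^{-n}\omega$ and push forward by $\mathcal{L}^{(n)}_{\sigma^{-n}\omega}$ to manufacture the fast directions responsible for exponent $\lambda_i$, even though the operators themselves need not be invertible.

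Fix $i$. Choose a $\mathbb{P}$-continuous family of complements $H_i\colon\Omega\to\mathcal{G}_{d_i}(X)$ with $V_i(\omega)=H_i(\omega)\oplus V_{i+1}(\omega)$, assembled by patching local charts using Lemma~\ref{lem:sumcts2} together with $\mathbb{P}$-continuity of $V_i, V_{i+1}$, and set
\begin{equation*}
E_i^{(n)}(\omega) := \mathcal{L}^{(n)}_{\sigma^{-n}\omega}\, H_i(\sigma^{-n}\omega).
\end{equation*}
Using property~(iv) iteratively together with the filtration, any nonzero vector in $H_i(\sigma^{-n}\omega)$ lies in $V_i(\sigma^{-n}\omega)\setminus V_{i+1}(\sigma^{-n}\omega)$, hence has exponent $\lambda_i$ at $\sigma^{-n}\omega$; in particular its image is nonzero and $H_i(\sigma^{-n}\omega)$ maps injectively, so $E_i^{(n)}(\omega)\in\mathcal{G}_{d_i}(X)$ and is transverse to $V_{i+1}(\omega)$. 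A one-line computation gives the equivariance $\mathcal{L}_\omega E_i^{(n)}(\omega)=E_i^{(n+1)}(\sigma\omega)$.

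The key estimate is that $(E_i^{(n)}(\omega))_n$ is Cauchy in the Grassmannian, exponentially fast. Lift the same basis of the finite-dimensional quotient $V_i(\omega)/V_{i+1}(\omega)$ into both $E_i^{(n)}(\omega)$ and $E_i^{(n+1)}(\omega)$; the two lifts differ by a vector in $V_{i+1}(\omega)$ that arises as $\mathcal{L}^{(n)}_{\sigma^{-n}\omega}$ applied to an element of $V_{i+1}(\sigma^{-n}\omega)$ of bounded norm, and so has size at most $e^{n(\lambda_{i+1}+\varepsilon)}$ (respectively $e^{n(\kappa^*+\varepsilon)}$ when $i$ is the last exceptional exponent), whereas the lifts themselves are of order $e^{n(\lambda_i-\varepsilon)}$. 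Ratioing these bounds yields $e^{-n(\lambda_i-\lambda_{i+1}-2\varepsilon)}$ on the local Grassmannian distance between consecutive terms, so $E_i^{(n)}(\omega)\to E_i(\omega)\in\mathcal{G}_{d_i}(X)$. Temperedness of the growth functions lets one upgrade these bounds to be uniform on a countable family of compact sets covering $\Omega$ up to null measure, and combined with $\mathbb{P}$-continuity of $\mathcal{L}$, $V_{i+1}$ and $H_i$ together with the Grassmannian continuity lemmas of Section~\ref{sec:grass}, this gives $\mathbb{P}$-continuity of the limit map $E_i$.

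With $E_i$ in hand, Oseledets splitting properties (5)--(7) follow readily: the direct sum decomposition $V_i=E_i\oplus V_{i+1}$ from transversality plus a dimension count, equivariance $\mathcal{L}_\omega E_i(\omega)=E_i(\sigma\omega)$ from passing to the limit in $\mathcal{L}_\omega E_i^{(n)}(\omega)=E_i^{(n+1)}(\sigma\omega)$, and the sharp growth $\lambda(\omega,v)=\lambda_i$ for $v\in E_i(\omega)\setminus\{0\}$ from filtration property~(4) applied to $E_i\subset V_i$ with $E_i\cap V_{i+1}=\{0\}$. Uniqueness is a standard cohomological argument: two $\mathbb{P}$-continuous equivariant complements differ by a $V_{i+1}$-valued equivariant section whose forward iterates are trapped in a bounded region of the Grassmannian yet are constrained to grow no faster than $e^{n\lambda_{i+1}}$, forcing them to vanish. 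The main obstacle throughout is the $\mathbb{P}$-continuity step: turning pointwise exponential convergence into convergence uniform on $\mathbb{P}$-continuity compacta requires combining Thieullen's $\mathbb{P}$-continuity framework with temperedness of $\|\mathcal{L}^{(n)}_\omega\|$ and $\|\mathcal{L}^{(n)}_\omega\|_{\mathrm{ic}}$, especially in the case $p=\infty$ where the gap $\lambda_i-\lambda_{i+1}$ shrinks with $i$ and the index of compactness $\kappa^*$ must be invoked to control the growth of everything below the filtration.
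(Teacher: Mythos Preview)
Your route differs fundamentally from the paper's. The paper does \emph{not} build $E_i$ as a limit of pushforwards $\mathcal L^{(n)}_{\sigma^{-n}\omega}H_i(\sigma^{-n}\omega)$; instead it uses Thieullen's injectivization trick. One embeds $X$ in the sequence space $\tilde X=\{(v_n)_{n\ge0}:\sup_n\|v_n\|<\infty\}$ and lifts $\mathcal L_\omega$ to the injective operator $\tilde{\mathcal L}_\omega(v_0,v_1,\ldots)=(\mathcal L_\omega v_0,\alpha_0 v_0,\alpha_1 v_1,\ldots)$, checks that $\tilde\kappa^*=\kappa^*<\lambda^*=\tilde\lambda^*$, applies the injective case of Theorem~\ref{thm:Thieullen} to get a genuine Oseledets splitting $\tilde E_i(\omega)\subset\tilde X$, and defines $E_i(\omega):=\pi\tilde E_i(\omega)$ where $\pi$ is the zeroth-coordinate projection. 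The verification (Claim~\ref{clm:existence}) is then short; $\mathbb P$-continuity of $E_i$ is inherited from that of $\tilde E_i$ via the continuous-basis Lemma~\ref{lem:ctsbasis}. Uniqueness (Claim~\ref{clm:uniqueness}) is argued by studying $g(\omega)=\|\Proj{V_{i+1}(\omega)}{E_i(\omega)\oplus F_i(\omega)}|_{E_i'(\omega)}\|$ in both time directions: a Baire-category argument shows $g(\sigma^n\omega)\to 0$, while full orbits together with Lemma~\ref{lem:fullorbits} give $\liminf_n g(\sigma^{-n}\omega)\ge 1$, contradicting Poincar\'e recurrence. Your direct pushforward scheme is the \cite{FLQ} strategy and can in principle be adapted, but the paper's reduction to the injective case is much cleaner precisely because all the delicate backward growth and measurability estimates are already contained in Thieullen's theorem for that case.

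The weakest point in your sketch is the $\mathbb P$-continuity step. The justification you give---``temperedness of the growth functions lets one upgrade these bounds to be uniform on a countable family of compact sets''---is not correct: temperedness controls subexponential growth \emph{along individual orbits} and says nothing about uniformity across a compact set of base points, so it does not yield uniform convergence of $E_i^{(n)}$ on the $\mathbb P$-continuity compacta. Since $X$ is not assumed separable, one cannot simply fall back on a Lusin-type ``measurable implies $\mathbb P$-continuous'' statement for maps into $\mathcal G_{d_i}(X)$ either. A workable fix is to observe that each $E_i^{(n)}$ is $\mathbb P$-continuous and apply Egorov on each compact piece (the Grassmannian carries the gap metric); the paper sidesteps the issue entirely by pulling $\mathbb P$-continuity back through $\pi$ from the already-established $\tilde E_i$. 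Your uniqueness outline also omits the backward half of the argument and the Poincar\'e recurrence step that actually forces the two complements to coincide.
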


The proof of this theorem (which makes extensive use of Theorem
\ref{thm:Thieullen}) is given in the next two subsections, in which
existence and uniqueness of the Oseledets splitting, respectively,
are proved.

\subsection{Existence of an Oseledets splitting}

Consider a random dynamical system
$\mathcal{R}=(\Omega,\mathcal{F},\mathbb{P},\sigma,X,\mathcal L)$ with an
ergodic homeomorphic base. Suppose $\mathcal L$ is
$\mathbb{P}$-continuous and $\int \log^+\|\mathcal
L_\omega\|\,\mathrm{d}\mathbb{P} <+\infty$. Let $\mathrm{EX}(\mathcal
R)=\{(\lambda_i,d_i)\}_{i=1}^p$ be the exceptional Lyapunov spectrum of
$\mathcal{R}$, and $(V_i:\Omega\to\mathcal{G}^\infty(X))_{i=1}^p$
the Lyapunov filtration.

Following Thieullen, we construct an extension Banach space $\tilde{X}$,
and a new generator $\tilde{\mathcal L}:\Omega\to L(\tilde{X},\tilde{X})$
whose cocycle retains
all the dynamical information of the original system but has the
advantage that $\tilde{\mathcal L}_\omega$ is injective.

The extended random dynamical system
$\tilde{\mathcal{R}}=(\Omega,\mathcal{F},\mathbb{P},\sigma,
\tilde{X},\tilde{\mathcal L})$ is defined as follows:
\begin{align*}
\tilde{X}=\left\{ (v_n)_{n=0}^\infty : \forall n,\  v_n\in X,\
\mathrm{sup}_n \|v_n\| <\infty  \right\}, \\
\tilde{\mathcal L}_\omega(v_0,v_1,v_2,\ldots) =  (\mathcal L_\omega v_0,
\alpha_0 v_0,\alpha_1 v_1,\alpha_2 v_2,\ldots),
\end{align*}
for a positive sequence $(\alpha_n)_{n=0}^\infty$ decaying to zero. We
endow $\tilde{X}$ with the norm $\|\tilde v\|_{\tilde X}=\sup_n
\|v_n\|_X$ where $\tilde v=(v_n)_{n=0}^\infty$. Every $\tilde{\mathcal
L}_\omega$ is injective on $\tilde{X}$. In Thieullen's article sufficient
conditions on the speed of decay of the sequence $(\alpha_n)$ are given
to ensure that the indices of compactness of $\tilde{\mathcal{R}}$ and
$\mathcal{R}$ are equal ($\tilde\kappa^*=\kappa^*$) and that
$\tilde\lambda^*=\lambda^*$. In fact we check in Subsection
\ref{subsec:Thieullensuff} that this holds for any sequence $(\alpha_n)$
of positive numbers tending to 0.

Provided $\kappa^*<\lambda^*$, we may apply the invertible form of
Thieullen's Theorem to $\tilde {\mathcal R}$ to obtain the
$\sigma$-invariant subset $\Omega'\subset \Omega$,
$\mathbb{P}(\Omega')=1$, $\mathbb{P}$-continuous Lyapunov filtration
$(\tilde{V}_i:\Omega\to\mathcal{G}^\infty(\tilde{X}))_{i=1}^p$ and
Oseledets subspaces
$(\tilde{E}_i:\Omega\to\mathcal{G}_\infty(\tilde{X}))_{i=1}^p$. We denote
by $\tilde{\lambda}(\omega,v):=\lim_{n\to\infty} \log\| \tilde{\mathcal
L}^{(n)}_\omega\tilde v \|$ the Lyapunov exponents for
$\tilde{\mathcal{R}}$.

Let $\pi:\tilde{X}\to X$ denote the (continuous) mapping onto the zeroth
coordinate. We have $\mathcal L\circ \pi = \pi\circ \tilde{\mathcal L}$.
Thieullen proves that for all $\omega\in\Omega'$ and $\tilde v\in\tilde
X$, $\lambda(\omega,\pi(\tilde v))=\tilde\lambda(\omega,\tilde v)$, and
that $\mathcal{R}$ and $\tilde{\mathcal{R}}$  have the same expectional
exponents. He then defines $V_i(\omega)=\pi(\tilde V_i(\omega))$ and
proves that the $(V_i:\Omega\to \mathcal G^\infty(X))_{i=1}^p$ form a Lyapunov filtration for the one-sided
system.

For each $1\leq i\leq p$, we define $E_i(\omega)=\pi
\tilde{E}_i(\omega)$. As the linear image of a finite dimensional
space, $E_i(\omega)$ is a closed subspace. We now demonstrate that
$(E_i:\Omega\to\mathcal{G}(X))_{i=1}^p$ is the splitting we seek.

\begin{claim}\label{clm:existence}
The maps $(E_i:\Omega\to\mathcal{G}(X))_{i=1}^p$ form a
$\mathbb{P}$-continuous Oseledets splitting for
$\mathcal{R}=(\Omega,\mathcal{F},\mathbb{P},\sigma,X,\mathcal L)$.
\end{claim}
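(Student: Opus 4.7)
The strategy is to derive all data for $\mathcal R$ by pushing Thieullen's conclusions for $\tilde{\mathcal R}$ down through $\pi\colon\tilde X\to X$. The Lyapunov filtration $V_i(\omega)=\pi\tilde V_i(\omega)$ is already in place, so for the candidate Oseledets subspaces $E_i(\omega)=\pi\tilde E_i(\omega)$ one must check four things: $E_i(\omega)$ has dimension exactly $d_i$; it satisfies $V_i=E_i\oplus V_{i+1}$; it is $\mathcal L$-equivariant; it realises $\lambda_i$ as a true limit; and it depends $\mathbb P$-continuously on $\omega$.

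The heart of the argument is to show that $\pi$ restricted to $\tilde E_i(\omega)$ is injective. For this we invoke Thieullen's identity $\lambda(\omega,\pi\tilde v)=\tilde\lambda(\omega,\tilde v)$, valid for every $\tilde v\in\tilde X$: if $\tilde v\in\tilde E_i(\omega)\cap\ker\pi$ were nonzero, then $\tilde\lambda(\omega,\tilde v)=\lambda(\omega,0)=-\infty$, contradicting the defining property $\tilde\lambda(\omega,\tilde v)=\lambda_i$ of the Oseledets subspace. Hence $\pi|_{\tilde E_i(\omega)}$ is a linear isomorphism onto $E_i(\omega)$, so $E_i(\omega)\in\mathcal G_{d_i}(X)$.

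With injectivity in hand, the remaining conditions follow by transfer through $\pi$. For condition~(5), applying $\pi$ to $\tilde V_i(\omega)=\tilde E_i(\omega)\oplus\tilde V_{i+1}(\omega)$ gives $V_i(\omega)=E_i(\omega)+V_{i+1}(\omega)$; any $v\in E_i(\omega)\cap V_{i+1}(\omega)$ lifts to a unique $\tilde v\in\tilde E_i(\omega)$ with $\tilde\lambda(\omega,\tilde v)=\lambda(\omega,v)\le\lambda_{i+1}<\lambda_i$, forcing $\tilde v=0$ and hence $v=0$. Since $E_i(\omega)$ is finite-dimensional and $V_{i+1}(\omega)$ is closed, the algebraic direct sum is automatically topological. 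For condition~(6), the intertwining $\mathcal L_\omega\circ\pi=\pi\circ\tilde{\mathcal L}_\omega$ together with $\tilde{\mathcal L}_\omega\tilde E_i(\omega)=\tilde E_i(\sigma\omega)$ yields $\mathcal L_\omega E_i(\omega)=E_i(\sigma\omega)$. For condition~(7), every $v\in E_i(\omega)\setminus\{0\}$ lies in $V_i(\omega)\setminus V_{i+1}(\omega)$ by the direct sum just established, so condition~(4) of the Lyapunov filtration supplies the true limit $\lim_n(1/n)\log\|\mathcal L^{(n)}_\omega v\|=\lambda_i$.

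For $\mathbb P$-continuity of $E_i$, use Remark~\ref{rem:Pcont} to write $\Omega$ as a countable disjoint union of compact sets $K_n$ of full total measure on which the $\mathbb P$-continuous map $\tilde E_i$ is continuous. On each $K_n$, Lemma~\ref{lem:ctsbasis} furnishes a continuous basis $\tilde e^{(n)}_1,\dots,\tilde e^{(n)}_{d_i}$ of $\tilde E_i$ on an open dense subset $U_n\subset K_n$ of full measure in $K_n$. Because $\pi$ is continuous linear and $\pi|_{\tilde E_i(\omega)}$ is injective, the vectors $\pi\tilde e^{(n)}_j(\omega)$ are continuous in $\omega$ and linearly independent, so they form a continuous basis of $E_i(\omega)$. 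Passing to local Grassmannian charts (as in Lemmas~\ref{lem:projcts}--\ref{lem:sumcts2}) converts continuity of a basis into continuity of $\omega\mapsto E_i(\omega)$ on $U_n$, and taking the union over $n$ yields $\mathbb P$-continuity on all of $\Omega$. The main obstacle is the injectivity established in the second paragraph; everything else amounts to bookkeeping that pushes Thieullen's conclusions for $\tilde{\mathcal R}$ through $\pi$.
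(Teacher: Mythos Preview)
Your proof is correct and follows essentially the same approach as the paper: both establish injectivity of $\pi|_{\tilde E_i(\omega)}$ via the Lyapunov exponent identity $\lambda(\omega,\pi\tilde v)=\tilde\lambda(\omega,\tilde v)$, then transfer the splitting, equivariance, and exponent properties from $\tilde{\mathcal R}$ through $\pi$, and finally obtain $\mathbb P$-continuity by applying Lemma~\ref{lem:ctsbasis} to produce continuous bases of $\tilde E_i$ on compact pieces and pushing them down. Your write-up is slightly more explicit about conditions~(6) and~(7), while the paper lifts $v\in E_i\cap V_{i+1}$ separately into $\tilde V_{i+1}$ and $\tilde E_i$ rather than using the filtration bound on $\lambda(\omega,v)$ directly, but these are cosmetic differences.
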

\begin{proof}
Let $v\in V_{i+1}(\omega)\cap E_i(\omega)$. Then $v=\pi(\tilde v)$ with
$\tilde v\in \tilde V_{i+1}(\omega)$ so that $\lambda(\omega,v)=\tilde
\lambda(\omega,\tilde v)\le \lambda_{i+1}$. On the other hand if $v\ne
0$, then $v=\pi(\tilde v')$ for some $\tilde v'\in \tilde
E_i(\omega)\backslash\{0\}$. In this case we obtain
$\lambda(\omega,v)=\tilde\lambda(\omega,\tilde v')=\lambda_i$. Since this
contradicts the fact that $\lambda(\omega,v)\le\lambda_{i+1}$ it follows
that $V_{i+1}(\omega)\cap E_i(\omega)=\{0\}$. Since $\tilde
V_i(\omega)=\tilde V_{i+1}(\omega)+\tilde E_i(\omega)$, any $v\in
V_i(\omega)$ can be written as $u+w$ with $u\in V_{i+1}(\omega)$ and
$w\in E_i(\omega)$. By the triviality of the intersection this
decomposition is unique. Since both spaces are closed ($V_{i+1}(\omega)$
by Thieullen's theorem and $E_i(\omega)$ by finiteness of dimension) we
obtain $V_i(\omega)=V_{i+1}(\omega)\oplus E_i(\omega)$.

We now show that $\dim E_i(\omega) =\dim \tilde E_i(\omega)$. If $\tilde
v\in\tilde{X}$ satisfies $\pi(\tilde v)=0$, then
$\tilde{\lambda}(\omega,\tilde v)= -\infty$, and so $\ker(\pi)\cap
\tilde{E}_i(\omega)=\{0\}$. Since $E_i(\omega)=\pi(\tilde E_i(\omega))$,
we see that $\dim E_i(\omega)\le \dim \tilde E_i(\omega)$. Suppose the
subspace $\tilde{E}_i(\omega)$ is $k$-dimensional and can be written as
$\tilde{E}_i(\omega)=\langle \tilde v^1, \ldots, \tilde v^k \rangle$.
Then $\langle v^i_0 := \pi(\tilde v^i):i=1,\ldots,k\rangle =
E_i(\omega)$. If we had $c_1 v^1_0 + \cdots + c_k v^k_0 = 0$ for some
$c\in \mathbb{R}^k\backslash\{0\}$, then we would have $c_1 \tilde v^1 +
\cdots c_k \tilde v^k \in\ker(\pi)\cap \tilde{E}_i(\omega)=\{0\}$,
contradicting linear independence of the $\tilde v^1,\ldots,\tilde v^k$.
Hence, $v^1_0,\ldots,v^k_0$, are also linearly independent, and so
$d_i=\dim E_i(\omega)=\dim \tilde{E}_i(\omega)$.

We end by proving the $\mathbb{P}$-continuity of the subspaces $E_i$.
Since the function $\tilde{E}_i:\Omega\to\mathcal{G}_{d_i}(\tilde{X})$ is
$\mathbb{P}$-continuous, there exists a sequence $(K_n)_{n\geq 0}$ of
pairwise disjoint compact subsets of $\Omega$ such that
$\mathbb{P}(\bigcup_{n\geq 0 } K_n)=1$ and $\tilde{E}_i|_{K_n}$ is
continuous for each $n\geq 0$. By Lemma \ref{lem:ctsbasis}, for each
$n\geq 0$ there exists an open and dense subset $U_n\subset K_n$,
$\mathbb{P}(U_n)=\mathbb{P}(K_n)$ and continuous functions
$\tilde{e}^{i,n}_j:U_n\to \tilde{X}$, $j=1,\ldots,d_i$, with
$\tilde{e}^{i,n}_1(\omega),\ldots,\tilde{e}^{i,n}_{d_i}(\omega)$ forming
a basis for $\tilde{E}_i(\omega)$ for each $\omega\in U_n$. Since
$\pi:\tilde{X}\to X$ is continuous, the functions
$e^{i,n}_j:=\pi\circ\tilde{e}^{i,n}_j:U_n\to X$ are continuous, and
$e^{i,n}_1(\omega),\ldots,e^{i,n}_{d_i}(\omega)$ forms a basis for
$E_i(\omega)$ as shown above. Take functions $e^i_j:\Omega\to X$,
$j=1,\ldots,d_i$, satisfying $e^i_j(\omega)=e^{i,n}_1(\omega)$ for
$\omega\in U_n$, $n\geq 0$. Applying Lemma \ref{lem:sumcts2} inductively
we see that $\omega\mapsto \langle
e^{i}_1(\omega),\ldots,e^{i}_{d_i}(\omega)\rangle
\in\mathcal{G}_{d_i}(X)$ is continuous on $U_n$, for each $n\geq 0$ and
so $\mathbb{P}$-continuous on $\Omega$, which shows that
$E_i:\Omega\to\mathcal{G}_{d_i}(X)$ is $\mathbb{P}$-continuous.
\end{proof}

\subsection{Uniqueness of the Oseledets splitting}
Consider
$\mathcal{R}=(\Omega,\mathcal{F},\mathbb{P},\sigma,X,\mathcal L)$, a
quasi-compact random dynamical system, and assume that $\sigma$ is
ergodic and $\int \log^+\|\mathcal
L_\omega\|\,\mathrm{d}\mathbb{P}(\omega) <\infty$. Let
$\mathrm{EX}(\mathcal{R})=\{(\lambda_i,d_i)\}_{i=1}^p$ be the
exceptional Lyapunov spectrum,
$(V_i:\Omega\to\mathcal{G}^{c_i}(X))_{i=1}^p$ the Lyapunov
filtration and $(E_i:\Omega\to\mathcal{G}_{d_i}(X))_{i=1}^p$ the
Oseledets subspaces constructed above.

The following lemma gives us exponential uniformity in a
finite-dimensional subspace all of whose Lyapunov exponents are equal. A
result of this type first appeared in the Euclidean case in a paper of
Barreira and Silva \cite{BarreiraSilva} (see also \cite{FLQ} for an
independent proof). The proof here follows by choosing a suitable basis.

\begin{lemma}\label{lem:uniformity}
Let $B\colon \Omega\to L(X,X)$ be a $\mathbb{P}$-continuous family of
operators and let $E\colon\Omega\to \mathcal G_d(X)$ be
$\mathbb{P}$-continuous. Suppose that $B(\omega)$ maps $E(\omega)$
bijectively to $E(\sigma\omega)$. If for almost every $\omega$,
$\lim_{n\to\infty} (1/n)\log \|B^{(n)}_\omega v\|\to \lambda$ for all $v\in
E(\omega)\backslash\{0\}$ (i.e. if all Lyapunov exponents of $B$ are
equal to $\lambda$) then
\begin{equation*}
  \lim_{n\to\infty}\frac1n\log\inf_{x\in S_{E(\omega)}}\|B^{(n)}_\omega
  x\|=\lim_{n\to\infty}\frac1n\log\sup_{x\in S_{E(\omega)}}\|B^{(n)}_\omega
  x\|=\lambda,
\end{equation*}
where $S_{E(\omega)}$ denotes the unit sphere $\{x\in
E(\omega):\|x\|=1\}$.
\end{lemma}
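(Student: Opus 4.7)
The plan is to reduce the statement to a finite-dimensional matrix cocycle question by using a well-conditioned basis, then invoke the classical Oseledets theorem. Applying Lemma \ref{lem:ctsbasis} to the $\mathbb{P}$-continuous map $E\colon\Omega\to\mathcal G_d(X)$ produces a $\mathbb{P}$-continuous basis $e_1(\omega),\ldots,e_d(\omega)$ of $E(\omega)$ on a full-measure set satisfying the uniform two-sided condition $\|a\|_2\le\|\sum_i a_i e_i(\omega)\|\le 4\sqrt d\,\|a\|_2$. Representing $B^{(n)}_\omega|_{E(\omega)}$ in the bases $e_i(\omega)$ and $e_i(\sigma^n\omega)$ yields an invertible $d\times d$ matrix $M_n(\omega)$ satisfying the multiplicative cocycle identity $M_{n+m}(\omega)=M_m(\sigma^n\omega)M_n(\omega)$, and the basis estimate gives, for $0\ne x=\sum a_i e_i(\omega)$,
\begin{equation*}
\frac{1}{4\sqrt d}\,\frac{\|M_n(\omega)a\|_2}{\|a\|_2}\;\le\;\frac{\|B^{(n)}_\omega x\|}{\|x\|}\;\le\; 4\sqrt d\,\frac{\|M_n(\omega)a\|_2}{\|a\|_2}.
\end{equation*}
Thus $\sup_{x\in S_{E(\omega)}}\|B^{(n)}_\omega x\|$ and $\inf_{x\in S_{E(\omega)}}\|B^{(n)}_\omega x\|$ are comparable, up to $d$-dependent constants, to $\|M_n(\omega)\|_2$ and $1/\|M_n(\omega)^{-1}\|_2$ respectively, and it suffices to show both grow exponentially at rate $\lambda$.

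The supremum direction is immediate: for any $x$ with $\|x\|=1$ one has $\|a\|_2\le 1$, so
$\|B^{(n)}_\omega x\|\le\sqrt d\,\max_i\|B^{(n)}_\omega e_i(\omega)\|$,
while the hypothesis gives $(1/n)\log\|B^{(n)}_\omega e_i(\omega)\|\to\lambda$; the reverse inequality comes from $x=e_i(\omega)/\|e_i(\omega)\|$. This yields $(1/n)\log\|M_n(\omega)\|_2\to\lambda$ almost surely.

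For the infimum, combine the elementary singular-value bound $\|M_n^{-1}\|_2\le\|M_n\|_2^{d-1}/|\det M_n|$ (since $|\det M_n|$ is the product of all singular values and the largest $d-1$ of them are bounded by $\|M_n\|_2$) with an analysis of $|\det M_n|$. Since $\log|\det M_n(\omega)|=\sum_{k=0}^{n-1}\log|\det M_1(\sigma^k\omega)|$ is an additive Birkhoff sum and $\log^+|\det M_1|\in L^1$ (inherited via the basis comparison from the standing integrability of $\log^+\|\mathcal L\|$), Birkhoff's ergodic theorem gives $(1/n)\log|\det M_n(\omega)|\to\Lambda$ almost surely for some constant $\Lambda$ with $\Lambda\le d\lambda$. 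To identify $\Lambda=d\lambda$, apply the classical finite-dimensional Oseledets theorem (Theorem \ref{thm:Oseledec}) to the matrix cocycle $M_n(\omega)$: the hypothesis that $(1/n)\log\|B^{(n)}_\omega v\|\to\lambda$ for every nonzero $v\in E(\omega)$ transfers through the basis to $(1/n)\log\|M_n(\omega)a\|_2\to\lambda$ for every nonzero $a\in\mathbb R^d$, so every layer of the Oseledets filtration must be assigned the same exponent $\lambda$, which therefore has multiplicity $d$; the sum of Lyapunov exponents with multiplicity is then $d\lambda$, which equals $\Lambda$. Combining yields $(1/n)\log\|M_n(\omega)^{-1}\|_2\le(d-1)\lambda-d\lambda=-\lambda$ in the limit, and the matching bound $\|M_n^{-1}\|_2\ge 1/\|M_n\|_2$ closes the estimate.

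The principal obstacle is the identification $\Lambda=d\lambda$. A naive attempt to upgrade the pointwise convergence $(1/n)\log\|M_n a\|_2\to\lambda$ (fixed $a$) to uniform convergence on the compact sphere $S^{d-1}$ fails, because the triangle-inequality error from replacing $a$ by a nearby $a'$ is amplified by $\|M_n\|_2$, which grows at the very same exponential rate $\lambda$. The shortcut through the determinant $|\det M_n|$ together with the finite-dimensional Oseledets theorem is therefore essential; everything else in the proof is basis bookkeeping and a single elementary inequality from linear algebra.
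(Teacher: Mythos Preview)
Your reduction to a matrix cocycle on $\mathbb{R}^d$ via the well-conditioned basis of Lemma~\ref{lem:ctsbasis} is exactly what the paper does; the difference lies only in how the uniformity on $\mathbb{R}^d$ is then established. The paper simply invokes an external result of Barreira and Silva \cite[Theorem~2]{BarreiraSilva} (equivalently the proof of \cite[Theorem~4.1]{FLQ}), which asserts precisely that if every nonzero vector in $\mathbb{R}^d$ has the same Lyapunov exponent for a matrix cocycle then the growth is uniform over the unit sphere. You instead supply an inline proof of that fact: the supremum is handled by testing on the $d$ basis vectors, and the infimum via the elementary singular-value bound $\|M_n^{-1}\|_2\le\|M_n\|_2^{d-1}/|\det M_n|$ together with the Oseledets identity $(1/n)\log|\det M_n|\to\sum_i\mu_i=d\lambda$. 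This is correct and is, in effect, one standard way to prove the Barreira--Silva statement; your route is more self-contained and exposes the mechanism, while the paper's is a one-line citation. One small point worth flagging: both arguments tacitly require $\log^+\|M_1\|\in L^1$ in order to invoke Oseledets (resp.\ Barreira--Silva), which is not among the lemma's stated hypotheses but does hold in the only place the lemma is used, where $B=\mathcal L$.
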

\begin{proof}
By Lemma \ref{lem:ctsbasis}, since $E:\Omega\to\mathcal{G}_{d}(X)$ is
$\mathbb{P}$-continuous, we have $\mathbb{P}$-continuous functions
$f_i:\Omega\to X$ satisfying
\begin{equation}\label{eqn:nicebasis}
\|a\|_2 \leq \left\|\sum_{i=1}^{d} a_i f_i(\omega)\right\| \leq 4
\sqrt{{d}} \|a\|_2,
\end{equation}
where $\|\cdot\|_2$ represents the Euclidean norm on $\mathbb{R}^d$. Let
$A(\omega):\mathbb{R}^{d}\to E(\omega)$ be the map given by
$A(\omega)a=\sum_{i=1}^{d}a_i f_i(\omega)$. By \eqref{eqn:nicebasis} we
have $\|a\|_2 \leq \|A(\omega)a\|\leq 4\sqrt{d} \|a\|_2$. The linear map
$A$ is invertible a.\,e.~ and satisfies $1/(4\sqrt{d})\|v\| \leq
\|A(\omega)^{-1}v\|_2\leq \|v\|$ for $v\in E(\omega)$. We have a
cocycle $\tau$ on $\mathbb{R}^{d}$ given by $\tau^{(n)}(\omega) :=
A(\sigma^n\omega)^{-1}B^{(n)}_\omega A(\omega)$. If $a\in
\mathbb{R}^{d}$, then
\begin{align*}
\|\tau^{(n)}(\omega)a\|_2 & \leq \|A(\sigma^n\omega)^{-1}\|
\cdot \|B^{(n)}_\omega A(\omega)a\| \\
& \leq \|B^{(n)}_\omega (A(\omega)a)\| \textrm{\ and} \\
\|\tau^{(n)}(\omega)a\|_2 & \geq \|B^{(n)}_\omega (A(\omega)a)\|
/\|A(\sigma^n\omega)\| \\
& \geq \frac{1}{4\sqrt{d}}\|B^{(n)}_\omega(A(\omega)a)\|.
\end{align*}
Since $A(\omega)$ is a bijection, it follows that
$\lim_{n\to\infty}(1/n)\log\|\tau^{(n)}(\omega)a\|_2=\lambda_i$ for each
$a\in\mathbb{R}^d\backslash\{0\}$. Applying the theorem of Barreira and Silva
\cite[Theorem 2]{BarreiraSilva} (or see \cite[Proof of Theorem
4.1]{FLQ} ), we have that
\begin{align*}
&\lim_{n\to\infty} (1/n)\log \inf \{\|\tau^{(n)}(\omega)a
\|_2:a\in\mathbb{R}^{d},\|a\|_2=1\}\\
=&\lim_{n\to\infty} (1/n)\log \sup \{\|\tau^{(n)}(\omega)a
\|_2:a\in\mathbb{R}^{d},\|a\|_2=1\}=\lambda.
\end{align*}
Reusing the above inequalities the proof of the Lemma is complete.
\end{proof}

A sequence $(v_n)_{n\in\mathbb{Z}}$ is called a \emph{full orbit} at
$\omega\in\Omega$ if $\mathcal L(\sigma^n\omega)v_n=v_{n+1}$ for all
$n\in\mathbb{Z}$. For full orbits, we may consider growth rates as
$n\to-\infty$.

\begin{lemma}\label{lem:fullorbits}
Let $(v_n)_{n\in\mathbb{Z}}\subset X$ be a full orbit for
$\omega\in\Omega'$ and suppose $v_n\in V_i(\sigma^n\omega)$ for all
$n\in\mathbb{Z}$. Then
\begin{equation*}
\liminf_{n\to\infty}(1/n)\log \| v_{-n} \| \geq - \lambda_i.
\end{equation*}
If we have $v_n\in E_i(\sigma^n\omega)$ for all $n\in\mathbb{Z}$, then we
have the stronger statement
\begin{equation*}
\lim_{n\to\infty}(1/n)\log \| v_{-n} \| =- \lambda_i.
\end{equation*}
\end{lemma}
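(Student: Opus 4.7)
The plan is to combine the cocycle identity $v_0=\mathcal L^{(n)}_{\sigma^{-n}\omega}v_{-n}$ with two applications of Kingman's subadditive ergodic theorem: once in its usual form along $\sigma$, and once along a reindexed cocycle over $\sigma^{-1}$, which is available because the base is invertible. I may assume $v_0\neq 0$: otherwise, since $\{n:v_n=0\}$ is upward closed in $\mathbb{Z}$, either the orbit is identically zero or it becomes nonzero at some largest time $N\le 0$, in which case the argument is applied at $\sigma^N\omega$ with an index shift.

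For the first claim I would set $f_n(\omega):=\log\|\mathcal L^{(n)}_\omega|_{V_i(\omega)}\|$, which is subadditive for $\sigma$ by the cocycle identity and the invariance $\mathcal L_\omega V_i(\omega)\subseteq V_i(\sigma\omega)$. The Baire category argument of Proposition \ref{prop:topexponent} adapts verbatim inside the closed subspace $V_i(\omega)$ to identify the topological exponent on $V_i$ with $\lambda_i$, so Kingman yields $(1/n)f_n(\omega)\to\lambda_i$ almost surely. Using $v_{-n}\in V_i(\sigma^{-n}\omega)$, the bound $\|v_0\|\le\|\mathcal L^{(n)}_{\sigma^{-n}\omega}|_{V_i(\sigma^{-n}\omega)}\|\cdot\|v_{-n}\|$ reduces the first claim to showing $\limsup_n(1/n)f_n(\sigma^{-n}\omega)\le\lambda_i$ almost surely.

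For the transfer, set $\tau:=\sigma^{-1}$ and $F_n(\omega):=f_n(\sigma^{-n}\omega)$; subadditivity of $f_n$ for $\sigma$ translates into
\[
F_{n+m}(\omega)=f_{n+m}(\sigma^{-(n+m)}\omega)\le f_n(\sigma^{-(n+m)}\omega)+f_m(\sigma^{-m}\omega)=F_n(\tau^m\omega)+F_m(\omega),
\]
so $F_n$ is a subadditive cocycle for the ergodic $\mathbb P$-preserving $\tau$, with $\mathbb E F_1^+=\mathbb E f_1^+\le\mathbb E\log^+\|\mathcal L_\omega\|<\infty$. Kingman then gives $(1/n)F_n\to\inf_n(1/n)\mathbb E f_n=\lambda_i$ almost surely, hence $\liminf_n(1/n)\log\|v_{-n}\|\ge-\lambda_i$. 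For the sharper statement on $E_i$, Lemma \ref{lem:uniformity} applied to $\mathcal L$ restricted to $E_i$ (noting that $\mathcal L_\omega E_i(\omega)=E_i(\sigma\omega)$ is a bijection and every nonzero vector in $E_i$ has exponent $\lambda_i$) gives $(1/n)\log\inf_{x\in S_{E_i(\omega)}}\|\mathcal L^{(n)}_\omega x\|\to\lambda_i$ almost surely, and from $\|v_0\|\ge\|v_{-n}\|\cdot\inf_{x\in S_{E_i(\sigma^{-n}\omega)}}\|\mathcal L^{(n)}_{\sigma^{-n}\omega}x\|$ combined with the same reindexing trick applied to $h_n(\omega):=-\log\inf_{x\in S_{E_i(\omega)}}\|\mathcal L^{(n)}_\omega x\|$ (subadditive for $\sigma$ by the chain rule on inf-norms, so that $h_n(\sigma^{-n}\omega)$ is subadditive for $\tau$), Kingman delivers $\limsup_n(1/n)\log\|v_{-n}\|\le-\lambda_i$.

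The main obstacle is exactly this forward-to-backward transfer: Kingman in its usual form only controls $(1/n)f_n(\omega)$ with $\omega$ fixed, whereas the cocycle identity forces control of the doubly-indexed sequence $(1/n)f_n(\sigma^{-n}\omega)$. The invertibility of $\sigma$ is essential, as it is precisely what lets us view the reindexed sequence as a genuine $\sigma^{-1}$-subadditive cocycle with identical $L^1$-moments; without invertibility of the base, no such direct argument is available.
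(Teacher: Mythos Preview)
Your argument is correct and is essentially the paper's proof. The reindexing $F_n(\omega)=f_n(\sigma^{-n}\omega)$ and the verification that it is subadditive for $\tau=\sigma^{-1}$ with the same $L^1$ averages is exactly the content of \cite[Lemma~8.2]{FLQ}, which the paper simply cites; and your $h_n(\omega)=-\log\inf_{x\in S_{E_i(\omega)}}\|\mathcal L^{(n)}_\omega x\|$ is the paper's $\log\|S^{(n)}(\sigma^n\omega)\|$ in disguise, so the second part also matches once one observes $\|S^{(n)}(\omega)\|=\big(\inf_{x\in S_{E_i(\sigma^{-n}\omega)}}\|\mathcal L^{(n)}_{\sigma^{-n}\omega}x\|\big)^{-1}$.
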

\begin{proof}
We have $\lim_{n\to\infty} (1/n)\log \|\mathcal
L^{(n)}_\omega|V_i(\omega)\| = \lambda_i$, and by \cite[Lemma 8.2]{FLQ},
it follows that $\lim_{n\to\infty} (1/n)\log \|\mathcal
L^{(n)}(\sigma^{-n}\omega)|V_i(\sigma^{-n}\omega)\| = \lambda_i$. Thus
for any full orbit $\{v_n\}_{n\in\mathbb{Z}}$ satisfying $0\neq v_n\in
V_i(\sigma^n\omega)$ for all $n\in\mathbb{Z}$, we have
\begin{align}
\limsup_{n\to\infty}(1/n)\log (\|\mathcal
L^{(n)}(\sigma^{-n}\omega)v_{-n}\|/\|v_{-n}\|) \leq \lambda_i.
\end{align}
Thus
\begin{align*}
\liminf_{n\to\infty} \frac{1}{n}\log \|v_{-n}\| &= - \limsup_{n\to\infty}
\frac{1}{n}\log \frac{\|v_0\|}{\|v_{-n}\|} \\
&= - \limsup_{n\to\infty} \frac{1}{n}\log \frac{\|\mathcal
L^{(n)}(\sigma^{-n}\omega)v_{-n}\|}{\|v_{-n}\|} \geq -\lambda_i.
\end{align*}

For the second statement we shall assume that $v_n\in
E_i(\sigma^n\omega)$ for all $n\in\mathbb{Z}$. The mapping $\mathcal
L_\omega|E_i(\omega)$ is a bijection, so we denote by
$S(\omega):E_i(\sigma\omega)\to E_i(\omega)$ the inverse map. We let
$S^{(n)}(\omega):= S(\sigma^{-n}\omega)\cdots
S(\sigma^{-1}\omega)=[\mathcal L^{(n)}_{\sigma^{-n}\omega}|
_{E_i(\sigma^{-n}\omega)}]^{-1}$ denote the cocycle for the map
$\sigma^{-1}$ generated by $S$. As $\log\|S^{(n)}(\omega)\|$ is a
subadditive sequence of functions over $\sigma^{-1}$, using \cite[Lemma 8.2]{FLQ}
again we have
\begin{align*}
\lim_{n\to\infty}\frac{1}{n}\log\|S^{(n)}(\omega)\| & =
\lim_{n\to\infty}\frac{1}{n}\log\|S^{(n)}(\sigma^n\omega)\| \\
& = - \lim_{n\to\infty}\frac{1}{n}\log\inf_{0\neq v\in
E_i(\omega)}\frac{\|\mathcal L^{(n)}_\omega v\|}{\|v\|} \\
& = -\lambda_i,
\end{align*}
where the last equality follows from Lemma \ref{lem:uniformity}. Suppose
now that $0\neq v_n\in E_i(\sigma^n\omega)$ for all $n\in\mathbb{Z}$.
Then we have
\begin{equation*}
\limsup_{n\to\infty}\frac{1}{n}\log\|v_{-n}\| =
\limsup_{n\to\infty}\frac{1}{n}\log\frac{\|S^{(n)}(\omega)v_0\|}{\|v_0\|}
\leq -\lambda_i.
\end{equation*}
\end{proof}

\begin{claim}\label{clm:uniqueness}
The $\mathbb{P}$-continuous Oseledets splitting is unique on a full
measure subset of $\Omega$.
\end{claim}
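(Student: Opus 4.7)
My plan is to reduce uniqueness to an analysis of growth rates of backward orbits. First I observe that the Lyapunov filtration $(V_i)$ itself is intrinsically determined by the exponents: property~(4) of the definition forces $V_i(\omega) = \{v \in X : \lambda(\omega, v) \le \lambda_i\}$, which depends only on the cocycle and not on the choice of splitting. Thus if $(E_i)$ and $(E_i')$ are two $\mathbb{P}$-continuous Oseledets splittings, they automatically share the same filtration $(V_i)$, and it suffices to show $E_i(\omega) = E_i'(\omega)$ on a full-measure $\sigma$-invariant set $\Omega''$ on which both splittings satisfy all of the stated properties.

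The key structural fact I will use is that each Oseledets subspace admits a genuine bi-infinite orbit. For $\omega\in\Omega''$, property~(6) gives $\mathcal L_\omega E_i(\omega)=E_i(\sigma\omega)$, and the kernel of $\mathcal L_\omega|_{E_i(\omega)}$ must be trivial, since a non-zero vector in that kernel would have Lyapunov exponent $-\infty$, contradicting property~(7). Hence $\mathcal L_\omega|_{E_i(\omega)}$ is a bijection onto $E_i(\sigma\omega)$, so any $v\in E_i(\omega)\setminus\{0\}$ has a unique backward orbit $(v_{-n})_{n\ge 0}$ with $v_{-n}\in E_i(\sigma^{-n}\omega)$ and $\mathcal L^{(n)}_{\sigma^{-n}\omega}v_{-n}=v$. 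Lemma~\ref{lem:fullorbits} then yields $\lim_{n\to\infty} (1/n)\log\|v_{-n}\|=-\lambda_i$, with the analogous statement for the corresponding backward orbit in $E_i'$.

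The uniqueness argument itself proceeds by contradiction. Fix $v\in E_i(\omega)$ and use property~(5) for the primed splitting to decompose $v=v'+w$ with $v'\in E_i'(\omega)$ and $w\in V_{i+1}(\omega)$. Assuming $w\ne 0$, set $w_{-n}:=v_{-n}-v'_{-n}$, which satisfies $\mathcal L^{(n)}_{\sigma^{-n}\omega}w_{-n}=w$. The $\sigma$-invariance of Lyapunov exponents (property~(iv), iterated) gives $\lambda(\sigma^{-n}\omega, w_{-n})=\lambda(\omega,w)\le \lambda_{i+1}$, and the intrinsic description of $V_{i+1}$ noted above then forces $w_{-n}\in V_{i+1}(\sigma^{-n}\omega)$; together with the forward iterates $w_n=\mathcal L^{(n)}_\omega w\in V_{i+1}(\sigma^n\omega)$, this produces a full orbit of $w$ lying entirely in the filtration piece $V_{i+1}$. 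Applying Lemma~\ref{lem:fullorbits} with index $i+1$ yields $\liminf_{n\to\infty}(1/n)\log\|w_{-n}\|\ge -\lambda_{i+1}$, whereas the triangle inequality together with the decay rates for $v_{-n}$ and $v'_{-n}$ gives $\limsup_{n\to\infty}(1/n)\log\|w_{-n}\|\le -\lambda_i$. Since $\lambda_i>\lambda_{i+1}$, this is a contradiction, forcing $w=0$ and hence $v\in E_i'(\omega)$. A dimension count (both $E_i(\omega)$ and $E_i'(\omega)$ have dimension $d_i$) upgrades the inclusion $E_i(\omega)\subseteq E_i'(\omega)$ to equality. The main obstacle is securing the inclusion $w_{-n}\in V_{i+1}(\sigma^{-n}\omega)$, which is what opens the door to Lemma~\ref{lem:fullorbits}; this is where invertibility of the base is genuinely used, even though injectivity of the individual operators $\mathcal L_\omega$ is not.
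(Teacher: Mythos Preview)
Your argument is correct and is genuinely different from, and more direct than, the paper's own proof. Both approaches ultimately rest on Lemma~\ref{lem:fullorbits} (and hence on Lemma~\ref{lem:uniformity}) to control backward orbits, but the paper wraps this inside a more elaborate dynamical argument: it introduces the projection $R(\omega)=\Proj{V_{i+1}(\omega)}{E_i(\omega)\oplus F_i(\omega)}$ and the measurable function $g(\omega)=\|R(\omega)|_{E'_i(\omega)}\|$, then shows via a Baire category argument on forward iterates that $g(\sigma^n\omega)\to 0$, while Lemma~\ref{lem:fullorbits} forces $\liminf_n g(\sigma^{-n}\omega)\ge 1$ on the bad set; Poincar\'e recurrence then delivers the contradiction. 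Your route bypasses the projection-norm function, the Baire category step, and the recurrence argument entirely: you work pointwise at a single $\omega$, split $v\in E_i(\omega)$ as $v'+w$ using the primed splitting, build a full orbit $(w_{-n})$ in $V_{i+1}$, and play the two growth estimates from Lemma~\ref{lem:fullorbits} (the $V_{i+1}$ lower bound versus the $E_i$, $E'_i$ upper bounds via the triangle inequality) directly against one another.

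What your approach buys is economy: no measurability of auxiliary functions, no forward analysis, no appeal to ergodic-theoretic recurrence. What the paper's approach buys is the extra information that the projection $g(\sigma^n\omega)\to 0$ along forward orbits, which is of some independent dynamical interest. One small remark: you invoke the second part of Lemma~\ref{lem:fullorbits} for the backward orbit in $E'_i$, whereas the lemma is literally stated for the constructed $E_i$; this is harmless, since the proof of that lemma uses only equivariance, bijectivity of $\mathcal L_\omega|_{E_i(\omega)}$, and the hypotheses of Lemma~\ref{lem:uniformity}, all of which hold for any $\mathbb P$-continuous Oseledets subspace, including $E'_i$.
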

\begin{proof}
Fix $1\leq i \leq p$. Consider a $\mathbb{P}$-continuous map
$E_i':\Omega\to\mathcal{G}_{d_i}(X)$ satisfying $\mathcal L_\omega
E_i'(\omega) = E_i'(\sigma\omega)$ and $E_i'(\omega)\oplus
V_{i+1}(\omega) = V_i(\omega)$ for almost every $\omega\in\Omega$. Assume
for a contradiction that there is a measurable subset $J\subset \Omega$,
$\mathbb{P}(J)>0$, such that $E_i(\omega)\neq E'_i(\omega)$ for all
$\omega\in J$.

Let $F_i(\omega)=\bigoplus_{j<i} E_j(\omega)$. We have $V_{i+1}(\omega)
\oplus E_i(\omega) \oplus F_i(\omega)=X$ for all $\omega\in\Omega'$. Let
$(U_n)_{n\geq 0}$ be a sequence of measurable subsets of $\Omega$,
$\mathbb{P}(\bigcup_{n\geq 0} U_n)=1$, such that the maps
$V_{i+1}|_{U_n}$, $E_{i}|_{U_n}$ and $F_{i}|_{U_n}$ are continuous. By
Lemma \ref{lem:sumcts2}, the map $E_i\oplus F_i$ is continuous on $U_n$
for each $n\geq 0$. By Lemma \ref{lem:projcts}, the map
$R(\omega):=\Proj{V_{i+1}(\omega)}{E_i(\omega)\oplus F_i(\omega)}$ is
continuous on $U_n$ for each $n\geq 0$. Thus, by Lemma
\ref{lem:restnormcts}, the mapping
$g(\omega)=\|R(\omega)\vert_{E_i'(\omega)}\|$ is $\mathbb{P}$-continuous,
and in particular, is $\mathcal F$-measurable.

We first prove that $\lim_{n\to\infty}g(\sigma^n\omega)=0$ for almost all
$\omega$. Let $\omega\in\Omega'$ be given. For any fixed $u\in
E_i'(\omega)\setminus\{0\}$, we have $R(\omega)u\in V_{i+1}(\omega)$ so
that for any $\epsilon>0$ there exists a $C<\infty$ with $\|\mathcal
L^{(n)}_\omega R(\omega)u\|\le Ce^{n(\lambda_{i+1}+\epsilon)}$ for all
$n>0$.

On the other hand since $u\in V_i(\omega)\setminus V_{i+1}(\omega)$,
there is a $C'>0$ such that $\|\mathcal L^{(n)}_\omega u\|\ge
C'e^{n(\lambda_i-\epsilon)}$ for all $n$. Fix
$\epsilon<\frac14(\lambda_i-\lambda_{i-1})$. We have for each fixed $u$
there is a constant $C_u$ such that
\begin{equation*}
  \frac{\|\mathcal L^{(n)}_\omega R(\omega)u\|}{\|\mathcal L^{(n)}_\omega
  u\|}\le
  C_u e^{-n(\lambda_i-\lambda_{i+1}-2\epsilon)}\text{  for all $n>0$.}
\end{equation*}
We now use a Baire category argument. Define $D_N$ by
\begin{equation*}
D_N=\{u\in E'_i(\omega)\colon \|\mathcal L^{(n)}_\omega R(\omega)u\|\le
Ne^{-n(\lambda_i-\lambda_{i+1}-2\epsilon)}\|\mathcal L^{(n)}_\omega
u\|\,\forall n>0\}.
\end{equation*}
Since these sets are closed and their union is all of $E'_i(\omega)$, one
of them must contain a ball $\overline{B_\delta(u)}\cap E'_i(\omega)$. By
scale-invariance it contains a ball $\overline{B_1(u/\delta)}\cap
E'_i(\omega)$. Set $u_0=u/\delta$ and let $x\in E'_i(\omega)$ satisfy
$\|x\|=1$. Then we have for each $n$
\begin{align*}
  \|\mathcal L^{(n)}_\omega R(\omega)(u_0+x)\|&\le
Ne^{-n(\lambda_i-\lambda_{i+1}-2\epsilon)}\|\mathcal
L^{(n)}_\omega(u_0+x)\|
\\
\|\mathcal L^{(n)}_\omega R(\omega)u_0\|&\le
Ne^{-n(\lambda_i-\lambda_{i+1}-2\epsilon)}\|\mathcal L^{(n)}_\omega
u_0\|.
\end{align*}
Using $\mathcal L^{(n)}_\omega R(\omega)=R(\sigma^n\omega)\mathcal
L^{(n)}_\omega$, subtracting the above two inequalities and using the
triangle inequality we obtain
\begin{equation*}
  \|R(\sigma^n\omega)\mathcal L^{(n)}_\omega x\|\le
  Ne^{-n(\lambda_i-\lambda_{i+1}-2\epsilon)}
  (\|\mathcal L^{(n)}_\omega(u_0+x)\|+\|\mathcal L^{(n)}_\omega u_0\|).
\end{equation*}

Since $\mathcal L^{(n)}_\omega x/\|\mathcal L^{(n)}_\omega x\|$ is a
general point of the intersection of the unit sphere with
$E'_i(\sigma^n\omega)$ we obtain
\begin{equation*}
  g(\sigma^n\omega)\le Ne^{-n(\lambda_i-\lambda_{i+1}-2\epsilon)}
  \frac{\sup_{x\in S_{E'_i(\omega)}} \|\mathcal L^{(n)}_\omega(u_0+x)\|
  +\|\mathcal L^{(n)}_\omega u_0\|}
  {\inf_{x\in S_{E'_i(\omega)}}\|\mathcal L^{(n)}_\omega x\|},
\end{equation*}
The numerator is bounded above by an expression of the form
$Ce^{n(\lambda_i+\epsilon)}$. Similarly, by Lemma \ref{lem:uniformity}
the denominator is bounded below by an expression of the form
$C'e^{n(\lambda_i-\epsilon)}$. It follows that $g(\sigma^n\omega)\le
(NC/C')e^{-n(\lambda_i-\lambda_{i+1}-4\epsilon)}$. By our choice of
$\epsilon$ we see that $g(\sigma^n\omega)\to 0$ as claimed.

Now let $\omega\in J$ and let $(v_n)$ be a full orbit over $\omega$ with
$v_0\in E'_i(\omega)\backslash E_i(\omega)$. Such an orbit exists since
$\mathcal L_\omega$ maps $E'_i(\omega)$ bijectively to
$E'_i(\sigma(\omega))$. Let $u_n=v_n-R(\sigma^n\omega)v_n$ and
$w_n=R(\sigma^n\omega)v_n$. Since $E'_i(\omega)\subset E_i(\omega)\oplus
V_{i+1}(\omega)$ we see that $u_n\in E_i(\sigma^n\omega)$ (i.e. $u_n$ has
no component in $F_i(\sigma^n\omega)$). We also have $w_n\in
V_{i+1}(\sigma^n\omega)$. Since $w_0\ne 0$ we have $w_n\ne 0$ for all
$n<0$.

We now have $R(\sigma^{-n}\omega)(w_{-n}+u_{-n})=w_{-n}$. Lemma
\ref{lem:fullorbits} tells us that $\|u_{-n}\|\le
Ce^{-n(\lambda_i-\epsilon)}$ and that $\|w_{-n}\|\ge
C'e^{-n(\lambda_{i+1}+\epsilon)}$. We deduce that
\begin{align*}
\liminf_{n\to\infty}g(\sigma^{-n}\omega) & =
\liminf_{n\to\infty}\|R(\sigma^{-n}\omega)
\vert_{E'_i(\sigma^{-n}\omega)}\| \\
& \ge \liminf_{n\to\infty}\|w_{-n}\|/(\|w_{-n}\|+\|u_{-n}\|) =  1.
\end{align*}
If we consider the set $A=\{\omega\in\Omega: g(\omega)<1/2\}$ we have for
almost every $\omega$, $\sigma^n \omega\in A$ for all large positive $n$
whereas $\sigma^n\omega\not\in A$ for all large negative $n$. This
contradicts the Poincar\'e recurrence theorem, and hence the promised
uniqueness is established.
\end{proof}

\subsection{Necessity of invertibility of the base}\label{sec:invbase}

The Main Theorem provides an invariant splitting in the absence of
invertibility of the operators as long as the base is invertible. It is
natural to ask whether one can obtain an invariant splitting in the
absence of invertibility of the base. The following example establishes
that in general this is not possible.

\begin{example}
Let $\Sigma=\{0,1\}^{\mathbb Z}$ be equipped with the
shift-transformation $\sigma$ and the $(\frac12,\frac12)$-Bernoulli
measure and let $A_0$ and $A_1$ be two non-commuting invertible $2\times
2$ matrices which we consider as operators on $\mathbb R^2$. Let
$\mathcal L_\omega\colon \mathbb R^2\to\mathbb R^2$ be given by $\mathcal
L_\omega=A_{\omega_0}$. We assume further that the two Lyapunov exponents
of the random dynamical system differ.  As is standard we define for
$n>0$, $\mathcal L^{(-n)}_\omega=A_{\omega_{-n}}^{-1}\circ \cdots\circ
A_{\omega_{-1}}^{-1}$. We call this random dynamical system $\mathcal R$.

Oseledets' theorem then guarantees that there is a decomposition
$\mathbb R^2=E_1(\omega)\oplus E_2(\omega)$ such that for $v\in
E_i(\omega)\setminus \{0\}$, $(1/n)\log\|\mathcal L^{(n)}_\omega
v\|\to \lambda_i$ both as $n\to\infty$ and as $n\to-\infty$; and
$\mathcal L_\omega(E_i(\omega))=E_i(\sigma(\omega))$ and $\mathcal
L_{\sigma^{-1}\omega}^{-1}(E_i(\omega))=E_i(\sigma^{-1}\omega)$ for
almost every $\omega$. By uniqueness (Theorem \ref{thm:main}), this splitting
is unique.

We define an inverse system $\overline{\mathcal R}$ as follows:
$\overline\Sigma=\Sigma$ where the base map is
$\overline\sigma=\sigma^{-1}$.  We define the operators on this
inverse system by $\overline{\mathcal
  L}_\omega=A_{\omega_{-1}}^{-1}$. Oseledets' theorem guarantees that
the splitting $E_1(\omega)\oplus E_2(\omega)$ also works for $\mathcal
R$.

We now define non-invertible systems $\mathcal R^+$ and $\mathcal R^-$
obtained by truncating the shifts in $\mathcal R$ and $\overline
{\mathcal R}$ to one-sided shifts. $\Sigma^+$ is defined to be
$\{0,1\}^{\mathbb Z^+}$ (where $\mathbb Z^+=\{0,1,2,\ldots\}$) and
$\Sigma^-$ is defined to be $\{0,1\}^{\mathbb Z^-}$ (where $\mathbb
Z^-=\{-1,-2,\ldots\}$). Let $\Sigma^+$ and $\Sigma^-$ be equipped with
their Borel $\sigma$-algebras $\mathcal B^+$ and $\mathcal B^-$. The
maps $\sigma$ and $\overline\sigma$ factor naturally onto maps
$\sigma^+$ on $\Sigma^+$ and $\sigma^-$ on $\Sigma^-$ through
$\pi^+(\omega)=(\omega_n)_{n\in\mathbb Z^+}$ and
$\pi^-(\omega)=(\omega_n)_{n\in\mathbb Z^-}$. Define $\mathcal L^+_\eta$
for $\eta\in\Sigma^+=A_{\eta_0}$ as before and similarly
$\mathcal L^-_\xi=A^{-1}_{\xi_{-1}}$ for $\xi\in\Sigma^-$ as
before. Let $\mathcal R^+$ and $\mathcal R^-$ be the two
one-sided dynamical systems.

Suppose now for a contradiction that there are Oseledets splittings
for $\mathcal R^+$ and $\mathcal R^-$: $E_1^+(\eta)\oplus
E_2^+(\eta)$ for $\eta\in\Sigma^+$ and $E_1^-(\xi)\oplus E_2^-(\xi)$
for $\xi\in\Sigma^-$ respectively.

Then one can check that $E_1^+(\pi^+(\omega))\oplus
E_2^+(\pi^+(\omega)) $ is an invariant splitting for $\mathcal R$
which gives the correct rates of expansion as $n\to\infty$.  Theorem
\ref{thm:main} guarantees that there is only one such splitting and hence we
see that
\begin{equation}\label{eq:plus}
E_i(\omega)=E_i^+(\pi^+(\omega))\text{ for almost every $\omega$.}
\end{equation}
Similarly $E_1^-(\pi^-(\omega))\oplus E_2^-(\pi^-(\omega))$ is an
invariant splitting for $\overline{\mathcal R}$ which gives the
correct rates of expansion as the power $n$ of the inverse random
dynamical system approaches $\infty$. Since the splitting for
$\overline{\mathcal R}$ was the same as that for $\mathcal R$ we
deduce that
\begin{equation}\label{eq:minus}
E_i(\omega)=E_i^-(\pi^-(\omega))\text{ for almost every $\omega$.}
\end{equation}
From \eqref{eq:plus} we deduce that $E_i$ is $\mathcal
F^+$-measurable where $\mathcal F^+={\pi^+}^{-1}\mathcal B^+$ whereas
from \eqref{eq:minus} we deduce that $E_i$ is $\mathcal
F^-$-measurable where $\mathcal F^-={\pi^-}^{-1}\mathcal B^-$. It
follows that the $E_i$ are $\mathcal F^-\cap \mathcal
F^+$-measurable. Since the intersection $\mathcal F^-\cap \mathcal
F^+$ is the trivial sigma-algebra it follows that $E_i$ is
constant almost everywhere, equal to $E^*_i$ say. From this it follows
that $A_0(E^*_i)=A_1(E^*_i)=E^*_i$ so that $A_0$ and $A_1$ have common
eigenspaces and hence are simultaneously diagonalizable. Since they do
not commute by assumption this is a contradiction.
\end{example}

\subsection{Reduction to the invertible case in Thieullen's
Theorem}\label{subsec:Thieullensuff}

As mentioned above Thieullen deduces the non-invertible version of his
theorem from the invertible case by constructing an invertible extension
of the given system. More specifically if the original system has maps
$\mathcal L_\omega$ acting on a Banach space $X$ the new system has maps
$\tilde {\mathcal L_\omega}$ acting on a Banach space $\tilde X$ where
\begin{align*}
  &\tilde X=\{(x_0,x_1,\ldots)\colon x_i\in X,\,
  \sup\|x_i\|<\infty\}\text{; and}\\
  &\tilde {\mathcal L_\omega}(x_0,x_1,\ldots)=(\mathcal
  L_\omega(x_0),\alpha_0x_0,\alpha_1x_1,\ldots).
\end{align*}

Thieullen then defines $\gamma_n=\sum_{k\le n}\log\alpha_k$ and states
conditions on the $(\alpha_n)$ and $(\gamma_n)$ which suffice to
ensure that the exceptional spectrum of the extension agrees with the
exceptional spectrum of the original system.

His conditions are as follows:
\begin{enumerate}
  \item $(\alpha_n)$ is a strictly decreasing sequence converging to
    0;\label{item:cond1}
  \item $\lim_{n\to\infty} \gamma_n/n=-\infty$;\label{item:cond2}
  \item $\forall\mu<0, \sup\{p\ge 0\colon \gamma_p\ge (n+p)\mu\}=o(n).$
    \label{item:cond3}
\end{enumerate}

We claim that Condition \eqref{item:cond1} implies the other two
conditions. That \eqref{item:cond1} implies \eqref{item:cond2} is
immediate. We now indicate a brief proof that \eqref{item:cond2}
implies \eqref{item:cond3}.

Let $\mu<0$ and $\epsilon>0$ be given. Set $M=-\mu/\epsilon-\mu$. By
\eqref{item:cond2} there is a $p_0$ such that for $p\ge p_0$ we have
$\gamma_p/p<-M$. At this point, choose an $n$.
If $\gamma_p\ge(n+p)\mu$ 
then either (i) $p<p_0$ or (ii) $p\ge p_0$ and therefore $(n+p)\mu\le\gamma_p<-pM$.
In the latter case $-p\mu/\epsilon=p(M+\mu)\le -n\mu$. We then see that
$p\le\min(p_0,\epsilon n)$. Since $\epsilon$ is arbitrary we see that
$\sup\{p\ge 0\colon \gamma_p\ge (n+p)\mu\}=o(n)$ as required and
Condition \eqref{item:cond3} is established.

We remark that in Thieullen's proofs it is sufficient to take a sequence
$(\alpha_k)$ for which \eqref{item:cond2} is satisfied. Clearly the most
natural way to do this is to take any sequence satisfying
\eqref{item:cond1}.

\section{Applications}\label{sec:apps}

The motivation for the development of Theorem \ref{thm:FLQ} is the
desire to extend transfer operator approaches for the global
analysis of dynamical systems from deterministic autonomous
dynamical systems to random or non-autonomous dynamical systems.

A common setting for deterministic systems is: $M\subset
\mathbb{R}^m$ is a smooth manifold and $T:M\to M$ a $C^1$
map with some additional regularity properties.  The (deterministic)
dynamical system $T:M\to M$ has an associated {\em
Perron--Frobenius operator} $\mathcal{L}_T:X\to X$ defined
by $\mathcal{L}_Tf(x)=\sum_{y\in T^{-1}x} f(y)/|\det DT(y)|$, where
$X$ is a Banach space of complex-valued functions on $M$. The
Perron--Frobenius operator evolves density functions on $M$ forward
in time, just as the map $T$ evolves single points $x\in M$ forward
in time.

More generally, the ``weight'' $1/|\det DT(y)|$ may be replaced with
a sufficiently regular generalised weight $g(y)$ to form a {\em
transfer operator}. Perron--Frobenius operators and transfer
operators have proven to be indispensable tools for studying the
long term behaviour of dynamical systems.  An ergodic absolutely
continuous invariant probability measure (ACIP) describes the long
term distribution of forward trajectories $\{T^kx\}_{k=0}^\infty$ in
$M$ for Lebesgue almost-all initial points in $x\in M$. An early use
of Perron--Frobenius operators was to prove the existence of ACIMs
for piecewise $C^2$ expanding maps \cite{LasotaYorke}. A study of
the peripheral spectrum of $\mathcal{L}_T$ yielded information on
the number of ergodic ACIPs \cite{HofbauerKeller,Rychlik}. The
particular weight function $1/|\det DT(y)|$ is attuned to ACIPs.
Other ``equilibrium states'' can be read off from the leading
eigenfunction of the transfer operator by varying the weight
function $g$ (in statistical mechanics terms, $g$ describes the
local energy of states in $M$).

The spectrum of the Perron--Frobenius operator provides information
on the exponential rate at which observables become temporally
decorrelated.  The essential spectral radius of Perron--Frobenius
operators \cite{Keller} establishes a threshold beyond which
spectral values are necessarily isolated.  Furthermore, this radius
is typically connected with the average rate at which nearby
trajectories separate.  Thus, these isolated spectral values are of
particular interest in applications because they predict
decorrelation rates slower than one expects to be produced by local
separation of trajectories.  The eigenfunctions associated with
these isolated eigenvalues have been used to detect slowly mixing
structures in a variety of physical systems, see, for example, \
\cite{Schuette,DellnitzMarsden,FPET07,DFHPS09}.

From a physical applications point of view, it is natural to study
random or time-dependent (non-autonomous) dynamical systems using a
transfer operator methodology. Theorem \ref{thm:FLQ} considered
this question in the setting of a finite number of piecewise linear,
expanding interval maps, sharing a joint Markov partition, where the
Perron--Frobenius operators acted on the space of functions of
bounded variation. In the present work, in our first application, we
remove the assumptions of finiteness,
piecewise linearity and Markovness, and
allow random compositions that are expanding-on-average. Our
second application is to subshifts of finite type with random continuously-parametrized weight functions.

\subsection{Application I: Interval maps.}

We now show that Theorem \ref{thm:main} can be applied in the context of
random compositions of expanding-on-average mappings acting through their
Perron--Frobenius operators on the space $\mathrm{BV}$ of functions of
bounded variation. In this context a major drawback of the Thieullen
approach becomes clear: if $T_1$ and $T_2$ are \emph{any} two distinct
expanding mappings then their Perron--Frobenius operators $\mathcal
L_{T_1}$ and $\mathcal L_{T_2}$ are far apart in the operator norm on
$\mathrm{BV}$. In fact the set of Perron--Frobenius operators acting on
$\mathrm{BV}$ is discrete. As a consequence, in order for $\omega\mapsto
\mathcal L_{T_\omega}$ to be a continuous map on a compact space, the
maps range of $\omega\mapsto T_\omega$ is forced to be finite. If we want
$\omega\mapsto \mathcal L_{T_\omega}$ to be $\mathbb{P}$-continuous then
it can have at most countable range.

Let $I=[0,1]\subset \mathbb{R}$ denote the closed unit interval,
$\mathcal B$ denote the Borel $\sigma$-algebra and $m$ denote Lebesgue
measure.

\begin{definition}
We say a map $T:I\to I$ is \emph{Rychlik} if
\begin{enumerate}
\item $T$ is differentiable on a dense open subset $U_T\subset I$ of full
measure;
\item for each connected component $B$ of $U_T$, $T|_B$ extends
to a homeomorphism from $\overline{B}$ to a subinterval of $I$;
\item the function $g_T:I\to \mathbb{R}$ has bounded variation, where
\begin{equation*}
g_T(x) = \left\{
\begin{array}{cl}
\frac{1}{|DT(x)|} & x\in U_T \\
0 & \mathrm{otherwise.}
\end{array}\right.
\end{equation*}
\end{enumerate}
\end{definition}

The class of Rychlik maps is closed under composition. Recall that the
\emph{variation} of a function $f:I\to \mathbb{R}$ is the quantity
\begin{equation}
\var(f) := \sup_{0=p_1<p_2<\ldots<p_k=1} \sum_{i=1}^k | f(p_i)-f(p_{i-1})
|.
\end{equation}
A function on the interval is said to be of bounded variation if
$\var(f)<\infty$.

The \emph{Perron--Frobenius operator} for a Rychlik map $T$ is defined,
for a function $f\in L^1(I)$ by
\begin{equation}
\mathcal{L}_Tf(x) = \sum_{y\in T^{-1}(x)} g_T(y) f(y).
\end{equation}
The Perron--Frobenius operator is a \emph{Markov operator}: that is, if
$f\in L^1(I)$, then $\int \mathcal L_T f\,\mathrm dm = \int f\,\mathrm
dm$, and if $f\geq 0$, then $\mathcal L_T f\geq 0$.

We consider the action of $\mathcal{L}_T$ on the Banach space
\begin{equation*}
\text{BV}:= \left\{ f\in L^\infty(I) : f\text{ has a version }
\tilde{f}\text{ with }\var \tilde{f}<\infty   \right\}
\end{equation*}
with norm $\|f\|:=\max(\|f\|_1, \inf\{\var(\tilde{f}):\tilde{f}\text{ is
a version of }f \})$. A version $\tilde{f}$ of $f\in\mathrm{BV}$ has
minimal variation if and only if $\tilde{f}(x)\in [\lim_{y\to
x^-}f(y),\lim_{y\to x^+}f(y)]$ for all $x$. We shall assume versions are
chosen so as to satisfy this condition, unless stated otherwise.

We shall need a lemma that is a combination of Lemmas 4, 5 and 6 from
Rychlik \cite{Rychlik}.

\begin{lemma}[Rychlik \cite{Rychlik}]\label{lem:rychlik}
Let $T$ be a Rychlik map of the unit interval and let $\mathcal L_T$ be
its Perron-Frobenius operator. Suppose $\essinf_x |T'(x)|>1$. Let
$a=3/\essinf |T'|$. Then there is a partition $\mathcal P$ of the
unit interval into finitely many subintervals and a constant $D$ such
that for all $f\in BV$
\begin{equation*}
  \var \mathcal{L}_T f \leq a \var f +D\sum_{J\in\mathcal P}
  \left|\int_J f\right|.
\end{equation*}
\end{lemma}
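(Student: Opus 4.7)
The plan is to follow the branch-wise Lasota--Yorke/Rychlik scheme. Let $\{B_k=(a_k,b_k)\}$ enumerate the connected components of $U_T$, with $\phi_k\colon T(B_k)\to B_k$ the inverse homeomorphism of $T|_{\overline{B_k}}$. Writing
$$\mathcal L_T f = \sum_k (g_T f)\circ\phi_k\cdot\mathbb{1}_{T(B_k)},$$
subadditivity of total variation together with the invariance of variation under the homeomorphisms $\phi_k$ yields
$$\var(\mathcal L_T f)\le\sum_k\Bigl(\var_{B_k}(g_T f)+g_T(a_k^+)|f(a_k^+)|+g_T(b_k^-)|f(b_k^-)|\Bigr),$$
the boundary terms arising from extending each branch summand by $0$ outside $T(B_k)$.

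I would next apply the product rule for variation, $\var_{B_k}(g_T f)\le\|g_T\|_{\infty,B_k}\var_{B_k}(f)+\|f\|_{\infty,B_k}\var_{B_k}(g_T)$, together with the pointwise bound $g_T\le 1/\essinf|T'|$, to rearrange this into
$$\var(\mathcal L_T f)\le\tfrac{1}{\essinf|T'|}\var f+\tfrac{2}{\essinf|T'|}\sum_k\|f\|_{\infty,B_k}+\sum_k\|f\|_{\infty,B_k}\var_{B_k}(g_T).$$
The crucial local estimate is that, for any subinterval $J_k\subset B_k$, the mean value $\bar f_{J_k}=m(J_k)^{-1}\int_{J_k}f$ satisfies $|f(x)-\bar f_{J_k}|\le\var_{B_k}(f)$ for every $x\in B_k$, hence
$$\|f\|_{\infty,B_k}\le\var_{B_k}(f)+\tfrac{1}{m(J_k)}\Bigl|\int_{J_k}f\Bigr|.$$
Substituting this into the $\tfrac{2}{\essinf|T'|}\sum_k\|f\|_{\infty,B_k}$ term contributes an additional $\tfrac{2}{\essinf|T'|}\var f$, which combines with the existing $\tfrac{1}{\essinf|T'|}\var f$ to give the claimed coefficient $a=3/\essinf|T'|$; the residual contributions become a weighted sum of terms $|\int_{J_k}f|$ with weights of the form $m(J_k)^{-1}$ (possibly multiplied by $\var_{B_k}(g_T)$).

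The principal obstacle is the selection of a \emph{finite} partition $\mathcal P$, since a Rychlik map may have infinitely many branches. The resolution exploits $g_T\in BV$: the sum $\sum_k\var_{B_k}(g_T)$ is finite, so only finitely many branches contribute non-trivially to $\sum_k\|f\|_{\infty,B_k}\var_{B_k}(g_T)$. I would refine the partition into branches, subdividing where necessary so that $\var_P(g_T)$ is uniformly small on each piece $P$ of the refinement, and amalgamating the infinitely many ``small'' tail branches into a finite collection of partition elements on which a common sampling subinterval $J$ can serve every contained branch at once. Taking $\mathcal P$ to consist of these sampling intervals (extended to a partition of $I$), the residual term is bounded by a constant multiple of $\sum_{J\in\mathcal P}|\int_J f|$, with the constant $D=D(T)$ depending on $\essinf|T'|$, $\var g_T$, and the minimum length of the $J\in\mathcal P$, but not on $f$.
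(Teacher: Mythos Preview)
The paper does not prove this lemma; it is quoted, without argument, as a combination of Lemmas~4,~5 and~6 of Rychlik~\cite{Rychlik}. Your outline is exactly the Rychlik/Lasota--Yorke scheme underlying those lemmas, so in that sense you are reproducing the intended proof rather than offering an alternative.

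Two places in the sketch need tightening. First, when the local estimate $\|f\|_{\infty,B_k}\le \var_{B_k}(f)+m(J_k)^{-1}\bigl|\int_{J_k}f\bigr|$ is substituted into $\sum_k\|f\|_{\infty,B_k}\var_{B_k}(g_T)$, you pick up not only the integral residues you list but also $\sum_k\var_{B_k}(g_T)\var_{B_k}(f)$, which feeds back into the coefficient of $\var f$. Your later refinement making $\var_P(g_T)$ small on each piece is precisely the remedy, but it has to be carried out \emph{before} you apply the product rule and substitute, not as a tidying step afterwards; otherwise the claimed constant $a=3/\essinf|T'|$ does not emerge. Second, the displayed bound $\tfrac{2}{\essinf|T'|}\sum_k\|f\|_{\infty,B_k}$ is useless when there are infinitely many branches, since the sum then diverges. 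You must keep the individual boundary weights $g_T(a_k^+)+g_T(b_k^-)$; because $g_T=0$ on $I\setminus U_T$, these jumps are part of $\var g_T$ and hence $\sum_k\bigl(g_T(a_k^+)+g_T(b_k^-)\bigr)\le\var g_T<\infty$. With the weights retained, your amalgamation idea becomes legitimate: all but finitely many branches carry total boundary weight below any prescribed $\epsilon$, and on a partition element $P$ containing such branches the estimate $\|f\|_{\infty,B_k}\le \var_P(f)+m(J)^{-1}\bigl|\int_J f\bigr|$ with a single $J\subset P$ contributes only $O(\epsilon)\var f$ plus an admissible integral term. This is essentially how Rychlik's Lemma~5 accomplishes the passage to a finite partition.
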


We define a random composition of Rychlik maps as follows. Let
$\{T_i\}_{i\in I}$, be a finite or countably infinite set of Rychlik maps.  Let
$\overline I$ denote the one-point compactification of $I$ (with the
discrete topology) and let $S=\overline I^{\mathbb Z}$. Let
$\sigma\colon S\to S$ be the shift map and let $\mathbb P$ be an
ergodic shift-invariant probability measure supported on
$\Omega=I^{\mathbb Z}$.  For $\omega\in\Omega$ let $\mathcal
L_\omega=\mathcal L_{T_{\omega_0}}$ be the Perron--Frobenius operator
of the map $T_{\omega_0}$ acting on the space $\text{BV}$. We make the
further assumption that $\int\log^+\|\mathcal L_\omega\|\,d\mathbb
P(\omega)<\infty$ (or equivalently $\sum_{i\in I}\mathbb
P(\{i\})\log\|\mathcal L_{T_i}\|<\infty$). If these conditions are
satisfied we refer to the 6-tuple $\mathcal R=(\Omega,\mathcal
F,\mathbb P,\sigma,\text{BV},\mathcal L)$ as a \emph{Rychlik random
  dynamical system}.

One can then verify that the system $\mathcal R$ satisfies the
assumptions of Theorem \ref{thm:main}.

We denote the $n$-fold composition
$T_{\sigma^{n-1}\omega}\circ \cdots T_{\sigma\omega}\circ T_\omega$ by
$T_\omega^{(n)}$. It is well known that
the composition, $\mathcal{L}^{(n)}_\omega$, of the Perron--Frobenius operators of
$T_{\omega}, T_{\sigma\omega},\ldots,T_{\sigma^{n-1}\omega}$ is
equal to the Perron--Frobenius operator of $T^{(n)}_\omega$. A random
composition may also be considered as a single transformation on the
space $\Omega\times I$ which we endow with the sigma-algebra $\mathcal
F\otimes \mathcal B$: the \emph{skew product} $\Theta:\Omega\times
I\to\Omega\times I$ is given by $ \Theta(\omega,x)=(\sigma\omega,
T_\omega x)$.

We shall need a well-known inequality relating the index of compactness
to the essential spectral radius. For a version of the converse
inequality the reader is referred to work of Morris \cite{Morris}. Let
$A\colon X\to X$ be a linear operator on a Banach space. We write
$\|A\|_\text{fr}$ for $\inf\{\|A-F\|\colon\text{$F$ has finite rank}\}$.
Recall from earlier $\|A\|_\text{ic}$ is defined to be $\inf\{r\colon
A(B_X)$ may be covered by a finite number of $r$-balls$\}$.

\begin{lemma}\label{lem:morris}
For a linear operator $A$ between Banach spaces $\|A\|_\text{ic}\le
\|A\|_\text{fr}$.
\end{lemma}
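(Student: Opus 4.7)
The plan is a direct two-step estimate: approximate $A$ in operator norm by a finite-rank operator $F$, then cover $F(B_X)$ by finitely many small balls (which is possible because $F(B_X)$ sits inside a finite-dimensional subspace and is bounded, hence totally bounded), and finally inflate those balls to absorb the error $\|A-F\|$.

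More precisely, fix any finite-rank operator $F\colon X\to Y$ and any $\delta>0$. Since the range of $F$ is finite-dimensional, $F(B_X)$ is a bounded subset of a finite-dimensional normed space and therefore totally bounded, so there exist finitely many points $y_1,\dots,y_n\in Y$ with $F(B_X)\subset\bigcup_{i=1}^n B_\delta(y_i)$. For every $x\in B_X$ we then have
\begin{equation*}
\|Ax-y_i\|\le\|Ax-Fx\|+\|Fx-y_i\|\le\|A-F\|+\delta
\end{equation*}
for a suitable $i$, so $A(B_X)$ is covered by $n$ balls of radius $\|A-F\|+\delta$. This yields $\|A\|_{\mathrm{ic}}\le\|A-F\|+\delta$, and letting $\delta\to 0$ and then taking the infimum over finite-rank $F$ gives $\|A\|_{\mathrm{ic}}\le\|A\|_{\mathrm{fr}}$.

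There is no real obstacle here; the only thing to keep straight is that total boundedness of $F(B_X)$ is automatic from finite-dimensionality of $\mathrm{ran}(F)$ combined with boundedness of $B_X$, and no separability or completeness hypothesis on $Y$ is needed for this direction of the comparison.
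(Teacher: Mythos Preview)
Your proof is correct and follows essentially the same approach as the paper: approximate $A$ by a finite-rank $F$, cover $F(B_X)$ by finitely many small balls using finite-dimensionality, and enlarge the radii by $\|A-F\|$ to cover $A(B_X)$. The paper phrases the last step as the set inclusion $A(B_X)\subset F(B_X)+R(B_X)$ with $R=A-F$, whereas you do the pointwise triangle inequality, but the content is identical.
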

\begin{proof}
Let $A=F+R$ where $F$ has finite rank and $\|R\|=r$. Let $\epsilon>0$.
Since $F(B_X)$ is compact it may be covered by a finite number of
$\epsilon$-balls for any $\epsilon>0$, $\bigcup_{n=1}^N B_\epsilon(x_n)$.
Hence $A(B_X)\subset F(B_X)+R(B_X)\subset \bigcup_{n=1}^N
B_\epsilon(x_n)+B_r(0)=\bigcup_{n=1}^N B_{r+\epsilon}(x_n)$ so that for
each $\epsilon>0$, $\|A\|_\text{ic}\le r+\epsilon$. Since it is possible
to find decompositions with $r$ arbitrarily close to $\|A\|_\text{fr}$
the lemma follows.
\end{proof}

Keller \cite{Keller} used Lemma \ref{lem:rychlik} together with a
supplementary argument to identify the essential spectral radius of the
Perron--Frobenius operator of an expanding Rychlik map acting on the space
of functions of bounded variation. We show that Keller's argument applies
equally in our context of random dynamical systems.

\begin{theorem}
  Let $\mathcal R=(\Omega,\mathcal F,\mathbb
  P,\sigma,\text{BV},\mathcal L)$ be a Rychlik random dynamical
  system. Then there exists a $\chi$ such that for $\mathbb{P}$-almost
  every $\omega$,
  \begin{align}
  \left(1/\essinf\left|{T^{(n)}_\omega}'(x)\right|\right)^{1/n}
  \rightarrow\chi.
  \end{align}
  Further if $\chi<1$ then $\|\mathcal L^{(n)}_\omega\|_{\text{ic}}
  ^{1/n}\rightarrow \chi$.
\end{theorem}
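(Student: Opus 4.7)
My plan is to treat the two claims separately: the first via Kingman's subadditive ergodic theorem, and the second by adapting Keller's deterministic argument to the cocycle setting.

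For existence of $\chi$, set $a_n(\omega):=-\log\essinf_x|(T^{(n)}_\omega)'(x)|$. The chain rule $(T^{(n+m)}_\omega)'(x)=(T^{(m)}_{\sigma^n\omega})'(T^{(n)}_\omega x)\cdot (T^{(n)}_\omega)'(x)$, after passing to essential infimum in $x$, yields the subadditive relation
\begin{equation*}
a_{n+m}(\omega)\le a_n(\omega)+a_m(\sigma^n\omega).
\end{equation*}
Integrability of $a_1^+$ follows from the standing hypothesis $\int\log^+\|\mathcal L_\omega\|\,d\mathbb P<\infty$, since testing $\mathcal L_\omega$ on the indicator of a small interval around a near-minimum of $|T'_\omega|$ in a single injectivity domain shows $\|\mathcal L_\omega\|_\mathrm{BV}\gtrsim 1/\essinf|T'_\omega|$. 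Ergodicity of $\sigma$ and Kingman's theorem then give a constant $-\log\chi\in[-\infty,\infty)$ with $a_n(\omega)/n\to -\log\chi$ almost surely, which is the first claim.

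Assume now that $\chi<1$. For the upper bound on $\|\mathcal L^{(n)}_\omega\|_\mathrm{ic}$, I would use that the Rychlik class is closed under composition: applying Lemma \ref{lem:rychlik} to $T^{(n)}_\omega$ produces constants $a'_n(\omega)=3/\essinf|(T^{(n)}_\omega)'|$ and $D_n(\omega)$ such that
\begin{equation*}
\var(\mathcal L^{(n)}_\omega f)\le a'_n(\omega)\var f+D_n(\omega)\|f\|_1.
\end{equation*}
Introduce the rank-$N$ averaging projection $\Pi_N\colon \mathrm{BV}\to\mathrm{BV}$ sending $f$ to its mean on each of $N$ equal subintervals of $I$; one checks $\|f-\Pi_Nf\|_1\le \var(f)/N$ and $\var(f-\Pi_Nf)\le 2\var f$. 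Feeding $(I-\Pi_N)f$ into the Lasota--Yorke bound and choosing $N\asymp D_n(\omega)/a'_n(\omega)$ gives $\|\mathcal L^{(n)}_\omega-\mathcal L^{(n)}_\omega\Pi_N\|_\mathrm{BV}\le C\,a'_n(\omega)$ with $\mathcal L^{(n)}_\omega\Pi_N$ of finite rank. Lemma \ref{lem:morris} then yields $\|\mathcal L^{(n)}_\omega\|_\mathrm{ic}\le C\,a'_n(\omega)$, and the first part of the theorem gives $\limsup_n\|\mathcal L^{(n)}_\omega\|_\mathrm{ic}^{1/n}\le \chi$.

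For the matching lower bound I would, for each $n$, exhibit a family of unit-norm BV functions whose images under $\mathcal L^{(n)}_\omega$ are pairwise separated in BV-norm at scale $\asymp 1/\essinf|(T^{(n)}_\omega)'|$. Specifically, pick points $x_1,\ldots,x_K$ in distinct injectivity domains of $T^{(n)}_\omega$ on which $|(T^{(n)}_\omega)'|$ nearly realizes its essential infimum, and take $f_j$ to be BV-normalized indicators of small intervals around the $x_j$, with branches chosen so that their images $T^{(n)}_\omega(J_j)$ are disjoint. Each $\mathcal L^{(n)}_\omega f_j$ is then essentially a plateau of height $\asymp 1/\essinf|(T^{(n)}_\omega)'|$ supported on $T^{(n)}_\omega(J_j)$; pairwise disjoint supports make them pairwise separated at that scale. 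If $\|\mathcal L^{(n)}_\omega\|_\mathrm{ic}$ decayed strictly faster than $1/\essinf|(T^{(n)}_\omega)'|$, the image $\mathcal L^{(n)}_\omega(B_\mathrm{BV})$ could be covered by finitely many balls too small to accommodate these separated points, a contradiction. The main obstacle will be this lower bound: controlling $D_n(\omega)$ and the geometry of the injectivity domains of $T^{(n)}_\omega$ uniformly enough along almost every orbit so that the test-function separation persists at the correct exponential scale.
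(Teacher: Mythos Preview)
Your plan for the existence of $\chi$ and for the upper bound on $\|\mathcal L^{(n)}_\omega\|_{\text{ic}}$ is essentially the paper's argument, though the paper executes the upper bound more cleanly: instead of averaging over $N$ equal subintervals and tuning $N\asymp D_n/a'_n$, it uses the \emph{same} partition $\mathcal P$ that Rychlik's lemma produces for $T^{(n)}_\omega$ and sets $E_{\mathcal P}f$ equal to the conditional expectation on $\mathcal P$. Then $(I-E_{\mathcal P})f$ has zero integral on each $J\in\mathcal P$, so the second term in the Lasota--Yorke inequality vanishes identically and one gets $\var\bigl(\mathcal L^{(n)}_\omega(I-E_{\mathcal P})f\bigr)\le 3a_n(\omega)\var f$ directly. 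This eliminates any need to control $D_n(\omega)$, which you flag as a concern.

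The genuine gap is in your lower bound. Exhibiting $K$ points in the unit ball whose images are pairwise separated at scale $a_n(\omega)$ only shows that any cover by balls of radius $\tfrac12 a_n(\omega)$ must use at least $K$ balls; since $K$ is finite this says nothing about $\|\mathcal L^{(n)}_\omega\|_{\text{ic}}$, which is defined as the infimal radius admitting \emph{some} finite cover. You need an \emph{infinite} family of separated image points. Your construction, moreover, asks for points in distinct injectivity domains of $T^{(n)}_\omega$ where the derivative is near its essential infimum, but there is no reason more than one branch should contain such points.

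The paper's remedy is to work inside a \emph{single} branch. Pick one interval $J$ contained in a single injectivity domain of $T^{(n)}_\omega$ on which $1/|(T^{(n)}_\omega)'|>(1-\epsilon)a_n(\omega)$, and consider $f_I=\tfrac12\mathbf 1_I$ for the (uncountably many) subintervals $I\subset J$. Since $T^{(n)}_\omega$ is a homeomorphism on $J$, two such $f_I,f_{I'}$ with no shared endpoints have images whose difference has four distinct jumps of size at least $(1-\epsilon)a_n(\omega)/2$, giving $\|\mathcal L^{(n)}_\omega f_I-\mathcal L^{(n)}_\omega f_{I'}\|>2(1-\epsilon)a_n(\omega)$. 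Hence no ball of radius $(1-\epsilon)a_n(\omega)$ contains more than two of the $\mathcal L^{(n)}_\omega f_I$, and no finite cover by such balls exists. The obstacle you anticipate---controlling $D_n$ and the global branch geometry---is therefore illusory: one branch and one small interval suffice.
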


\begin{definition}
  We say that the Rychlik random dynamical system appearing in the theorem is
  \emph{expanding-on-average} if $\chi<1$.
\end{definition}

\begin{proof}
We note that both $\|\mathcal L^{(n)}_\omega\|_\text{ic}$ and
$a_n(\omega)=1/\essinf_x\left|{T^{(n)}_\omega}'(x)\right|$ are
submultiplicative. It follows from the subadditive ergodic theorem that
both of the limits appearing in the statement of the theorem exist for
$\mathbb P$-almost every $\omega$. In the case where $\chi<1$ we claim
the following inequalities:

\begin{equation}\label{eq:essrad}
  a_n(\omega)\le \left\|\mathcal L^{(n)}_\omega\right\|_{\text{ic}}
  \le \left\|\mathcal L^{(n)}_\omega\right\|_{\text{fr}}\le
  3a_n(\omega)\text{ provided $a_n(\omega)<1$.}
\end{equation}

The middle inequality is Lemma \ref{lem:morris}. To see the upper
bound, let $\mathcal P$ be the partition of the interval into
subintervals guaranteed by Lemma \ref{lem:rychlik}. Let $E_\mathcal P$
be the conditional expectation operator defined by
\begin{equation*}
  E_\mathcal P f(t)=\frac{1}{|J|}\int_J f\text{ for $t\in J$}.
\end{equation*}
We then have $\mathcal L^{(n)}_\omega=\mathcal L^{(n)}_\omega\circ
(1-E_\mathcal P)+\mathcal L^{(n)}_\omega\circ E_\mathcal P$. The second
term has finite rank and Lemma \ref{lem:rychlik} guarantees that $\var
(\mathcal L^{(n)}_\omega\circ (1-E_\mathcal P)f)\le 3a_n(\omega) \var f$.
Since $\mathcal L^{(n)}_\omega$ preserves integrals and $(1-E_\mathcal
P)f$ has integral 0, it follows that $\mathcal L^{(n)}_\omega\circ
(1-E_\mathcal P)f$ has integral 0 and therefore that the $L^1$ norm is
bounded above by the variation. This yields $\|\mathcal
L^{(n)}_\omega\circ (1-E_\mathcal P)\|\le 3a_n(\omega)$ so that
$\|\mathcal L^{(n)}_\omega\|_{\text{fr}}\le 3a_n(\omega)$.

For the lower bound fix an $\epsilon>0$ and suppose that
$1/|{T^{(n)}_\omega}'(x)|>(1-\epsilon)a_n(\omega)$ for $x$ in an
interval $J$. Suppose further that $J$ lies in a single branch of
$T^{(n)}_\omega$. Let $I$ and $I'$ be two subintervals of $J$, with no
endpoints in common and let $f_I=\frac12\mathbf 1_{I}$ and
$f_{I'}=\frac12\mathbf 1_{I'}$. Then we have $\|\mathcal
L^{(n)}_\omega f_I-\mathcal L^{(n)}_\omega f_{I'}\|
>2(1-\epsilon)a_n(\omega)$. It follows that no
$(1-\epsilon)a_n(\omega)$ ball contains more than two $\mathcal
L^{(n)}_\omega f_I$'s with distinct endpoints and so in particular
$\mathcal L^{(n)}_\omega B_{\mathrm{BV}}$ does not have a finite cover
by $(1-\epsilon)a_n(\omega)$ balls. We see that $\left\|\mathcal
  L^{(n)}_\omega\right\|_{\text{ic}}\ge
(1-\epsilon)a_n(\omega)$. Since $\epsilon$ is arbitrary, we see that
\eqref{eq:essrad} follows.

Taking $n$th roots and taking the limit, the theorem follows.
\end{proof}

We now demonstrate that $\lambda^*=0$. As a Perron--Frobenius operator
$\mathcal L^{(n)}_\omega$ is a stochastic operator for each
$\omega\in\Omega$, for any density $0\not\equiv f\in\mathrm{BV}$ we have
$\|\mathcal L^{(n)}_\omega f\|\geq \|\mathcal L^{(n)}_\omega
f\|_1=\|f\|_1$, which shows that $\lambda(\omega)\geq 0$. To show
$\lambda^*\leq 0$, since $\|\mathcal L_\omega\|_1\leq 1$, it suffices to
consider the growth of the variation of $\mathcal L^{(n)}_\omega f$.  As
$\chi<1$, for almost every $\omega\in\Omega$ there exists $n\in\mathbb
N$, $0<\alpha<1$ and $\beta\geq 0$ such that $\var \mathcal
L^{(n)}_\omega f \leq \alpha \var f +\beta\|f\|_1$ by Lemma
\ref{lem:rychlik}. Iterating this inequality gives a bound for the
sequence $(\var \mathcal L^{(kn)}_\omega)_{k\in\mathbb N}$, and so
$\liminf_{k\to\infty} (1/(nk))\log\|\mathcal L^{(nk)}_\omega\|\leq 0$.
As $\lim_{n\to\infty} (1/n)\log\|\mathcal L^{(n)}_\omega\|$ exists for
$\mathbb P$-almost every $\omega$, we have $\lambda^*\leq 0$.

\begin{corollary}\label{cor:RychlikOS}
  Let $\mathcal R=(\Omega,\mathcal F,\mathbb
  P,\sigma,\text{BV},\mathcal L)$ be a Rychlik random dynamical
  system. Assume that $\mathcal R$ is expanding-on-average. Then
  $\mathcal R$ is quasi-compact, with
\begin{equation*}
  \kappa^* = \lim_{n\to\infty}\frac 1n\log \left(1/\essinf_x
    \left|{T^{(n)}_\omega}'(x)\right|\right)<0=\lambda^*\text{ for
    $\mathbb{P}$-almost every $\omega$}.
\end{equation*}
The random dynamical system therefore admits a $\mathbb P$-continuous
Oseledets splitting.
\end{corollary}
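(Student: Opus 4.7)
The plan is to derive the corollary as a direct application of Theorem \ref{thm:main} to $\mathcal R$, once all of its hypotheses are checked. The preceding theorem already identifies $\kappa^*$ with $\log\chi$ along almost every orbit, and the paragraph immediately before the statement of the corollary pins down $\lambda^*=0$, so the remaining work is essentially bookkeeping: the real analytic content (the Lasota--Yorke-type inequality and the relation between the index of compactness and $\essinf |{T^{(n)}_\omega}'|$) has already been carried out.

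First I would verify the structural hypotheses of Theorem \ref{thm:main}. The base space $\Omega=I^{\mathbb Z}$ is a Borel subset of the compact metrizable space $\overline I^{\mathbb Z}$, equipped with its Borel $\sigma$-algebra, and $\sigma$ is an ergodic homeomorphism by assumption. The integrability condition $\int\log^+\|\mathcal L_\omega\|\,d\mathbb P<\infty$ is built into the definition of a Rychlik random dynamical system. To establish $\mathbb P$-continuity of $\omega\mapsto\mathcal L_\omega$, I would observe that $\mathcal L_\omega=\mathcal L_{T_{\omega_0}}$ depends only on the $0$-coordinate, so the cylinders $\Omega_i:=\{\omega\in\Omega:\omega_0=i\}$, $i\in I$, form a countable Borel partition of $\Omega$ on each of which $\mathcal L$ is literally constant; this satisfies Definition \ref{def:Pcont} directly.

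Next I would assemble the inequality $\kappa^*<\lambda^*$. From the preceding theorem, for $\mathbb P$-a.e.\ $\omega$,
\[
\kappa^*=\lim_{n\to\infty}\frac1n\log\|\mathcal L^{(n)}_\omega\|_{\mathrm{ic}}=\lim_{n\to\infty}\frac1n\log\bigl(1/\essinf_x|{T^{(n)}_\omega}'(x)|\bigr)=\log\chi,
\]
and the expanding-on-average hypothesis $\chi<1$ yields $\kappa^*<0$. On the other hand, since each $\mathcal L^{(n)}_\omega$ is Markov, taking a density $f\in\mathrm{BV}$ of unit $\mathrm{BV}$-norm gives $\|\mathcal L^{(n)}_\omega f\|\ge\|\mathcal L^{(n)}_\omega f\|_1=\|f\|_1>0$, hence $\lambda^*\ge 0$; conversely, the Lasota--Yorke bound from Lemma \ref{lem:rychlik}, iterated along a subsequence with contraction factor strictly less than one, gives a uniform-in-$k$ bound on $\var(\mathcal L^{(nk)}_\omega f)$, which together with $\|\mathcal L^{(nk)}_\omega f\|_1\le\|f\|_1$ forces $\lambda^*\le 0$. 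Thus $\lambda^*=0>\kappa^*$, so Theorem \ref{thm:main} applies and yields quasi-compactness together with a unique $\mathbb P$-continuous Oseledets splitting. The only step with any real content is the $\mathbb P$-continuity check, and that is rendered trivial here by the discreteness of the index set $I$ combined with the fact that $\mathcal L_\omega$ depends only on $\omega_0$.
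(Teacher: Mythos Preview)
Your proposal is correct and matches the paper's (implicit) proof: the corollary has no separate argument in the paper, which simply assembles the preceding theorem ($\kappa^*=\log\chi$), the paragraph establishing $\lambda^*=0$, and the earlier remark that $\mathcal R$ satisfies the hypotheses of Theorem~\ref{thm:main}. Your explicit verification of $\mathbb P$-continuity via the countable partition into cylinders $\{\omega_0=i\}$ is exactly the content of the paper's phrase ``one can then verify'', and the rest is the same bookkeeping.
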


The Oseledets splitting provides information on the invariant measures
and rates of mixing of the random system. A natural generalisation of
the notion of `invariant measure' to the random setting is the concept
of `sample measure'. A family $\{\mu_\omega\}_{\omega\in\Omega}$ of
\emph{sample measures} (see \cite{Arnold}), is a family of probability
measures $\mu_\omega$ on $I$ satisfying
\begin{enumerate}
\item for all $U\in\mathcal F$, the map $\omega\mapsto \mu_\omega(U)$
  is $\mathcal F$-measurable.
\item $T_\omega \mu_\omega = \mu_{\sigma\omega}$ for
  a.\,e. $\omega\in\Omega$.
\end{enumerate}
Given a family $\{\mu_\omega\}_{\omega\in\Omega}$ of sample measures,
the measure $\mu$ on $\Omega\times I$ given by $\mathrm d\mu(\omega,x)
:= \mathrm d\mu_\omega(x) \mathrm d\mathbb P(\omega)$ is an invariant
probability for the associated skew product
$\Theta(\omega,x)=(\sigma\omega,T_\omega x)$. Conversely, any
$\Theta$-invariant probability measure $\mu$ with marginal $\mathbb P$
on $\Omega$ may be disintegrated to give a family of sample measures
for the original system.

Sample measures for random compositions of expanding interval maps have
previously been studied by Pelikan \cite{Pelikan}, Morita \cite{Morita},
and in a more general setting by Buzzi \cite{Buzzi}. He considers random
compositions of Lasota--Yorke maps that have neither too many branches
nor too large distortion, and proves that the associated skew product
transformation possesses a finite number of mutually singular ergodic
ACIPs $\mu$, each giving a family $\{\mu_\omega\}_{\omega\in\Omega}$ of
sample measures with densities of bounded variation. Returning to the
present setting of a random composition of Rychlik maps, any such family $\{f_\omega\}_{\omega\in\Omega}$ of sample
measures with densities of bounded variation satisfies $\mathrm
d\mu_\omega/\mathrm dm\in E_1(\omega)$ for $\mathbb P$-almost every
$\omega$. It follows that the number of such mutually singular ergodic
ACIPs (whose sample measure densities are necessarily linearly
independent for $\mathbb P$-a.\,e. $\omega$) is bounded by $d_1$, the
dimension of the Oseledets subspace $E_1(\omega)$.

Furthermore, the exceptional Lyapunov spectral values strictly less than
$0$, and their corresponding Oseledets subspaces, provide information on
exponential decay rates that are slower than the decay produced by local
separation of trajectories. The authors discuss and provide examples of
such spectral values and Oseledets subspaces in \cite{FLQ}. Corollary
\ref{cor:RychlikOS} provides conditions under which Oseledets subspaces
exist in much greater generality than in \cite{FLQ}, removing the
assumptions of piecewise linearity and Markovness, and allowing the
system to be expanding on average. In non-rigorous numerical experiments,
Oseledets subspaces have been shown to effectively capture so-called
``coherent sets'' in aperiodic fluid flow \cite{FLS}. The present work
represents a first step toward making such calculations rigorous by
extending the study of Perron--Frobenius operator cocycles to Banach
spaces that are more representative of fluid flow.

\subsection{Application II: Transfer Operators with Random Weights}

Let $\Sigma$ be a one-sided 1-step shift of finite type on $N$ symbols.
We assume that for each symbol $j$ in the alphabet there is at least one
$i$ for which $ij$ is a legal transition (if not we restrict our
attention to the subset of $\Sigma$ obtained by deleting all symbols that
have no preimage).  For $x,y\in\Sigma$ we let $\Delta(x,y)$ be
$\min\{n\colon x_n\ne y_n\}$ (or $\infty$ if $x=y$). The $\theta$-metric
on $\Sigma$ is $d_\theta(x,y)=\theta^{\Delta(x,y)}$ (so that the standard
metric is $d_{1/2}$).

We will write $S$ for the usual left shift map on $\Sigma$. If
$x\in\Sigma$ and $v$ is a word of some length $k\ge 1$ in the alphabet
such that $v_{k-1}x_0$ is a legal transition then we will write $vx$ for
the point in $S^{-k}x$ obtained by concatenating $v$ and $x$.

Let $\mathcal C_\theta$ denote the set of $\theta$-Lipschitz functions:
those functions $f$ for which there is a $C$ such that $|f(x)-f(y)|\le
Cd_\theta(x,y)$ for all $x$ and $y$. We define $|f|_\theta$ to be the
smallest $C$ for which such an inequality holds. As usual we endow
$\mathcal{C}_\theta$ with the topology generated by the norm
$\|f\|_\theta=\max(|f|_\theta,\|f\|_\infty)$. Let $\mathcal{W}_\theta$ be
the collection of those functions $g$ in $\mathcal C_\theta$ such that
$\min_x g(x)>0$.

Denote $P_gf(x)=\sum_{y\in S^{-1}x}f(y)g(y)$ and consider $P_g$ as
an operator on $(\mathcal C_\theta,\|\cdot\|_\theta)$. For the purposes
of the following lemma we consider arbitrary $g\in \mathcal C_\theta$ but
we shall later restrict to $g\in \mathcal W_\theta$.

\begin{lemma}
The map $P\colon \mathcal  C_\theta\to L(\mathcal C_\theta,\mathcal
C_\theta)$ is continuous with respect to the operator norm on $L(\mathcal
C_\theta,\mathcal C_\theta)$.
\end{lemma}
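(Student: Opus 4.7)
The map $g\mapsto P_g$ is plainly linear: $P_{g_1+g_2}=P_{g_1}+P_{g_2}$ and $P_{\alpha g}=\alpha P_g$. Thus continuity is equivalent to the existence of a constant $C$ with $\|P_g\|_{L(\mathcal C_\theta,\mathcal C_\theta)}\le C\|g\|_\theta$ for all $g\in\mathcal C_\theta$, and my plan is to establish this bound by estimating $\|P_gf\|_\infty$ and $|P_gf|_\theta$ separately for $f\in\mathcal C_\theta$.

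The sup-norm estimate is immediate: since the shift has at most $N$ preimages at any point, $|P_gf(x)|\le N\|f\|_\infty\|g\|_\infty\le N\|f\|_\theta\|g\|_\theta$. For the Lipschitz seminorm, I will exploit the natural bijection between $S^{-1}x$ and $S^{-1}x'$ obtained by prepending the same admissible symbol: if $\Delta(x,x')=k\ge 1$ then $x_0=x'_0$, and for each $j$ with $jx_0$ admissible we can pair $y=jx\in S^{-1}x$ with $y'=jx'\in S^{-1}x'$, giving $\Delta(y,y')=k+1$ and hence $d_\theta(y,y')=\theta\,d_\theta(x,x')$.

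With this pairing, a standard product-rule decomposition yields
\begin{align*}
|P_gf(x)-P_gf(x')|
&\le\sum_{y}\bigl(|f(y)-f(y')|\,|g(y)|+|f(y')|\,|g(y)-g(y')|\bigr)\\
&\le N\,\theta\,d_\theta(x,x')\bigl(|f|_\theta\|g\|_\infty+\|f\|_\infty|g|_\theta\bigr)\\
&\le 2N\theta\,\|f\|_\theta\|g\|_\theta\,d_\theta(x,x'),
\end{align*}
so $|P_gf|_\theta\le 2N\theta\,\|f\|_\theta\|g\|_\theta$. Combining the two bounds, $\|P_gf\|_\theta\le C\|f\|_\theta\|g\|_\theta$ with $C=2N\theta+N$, so $\|P_g\|_{op}\le C\|g\|_\theta$, and linearity yields continuity of $g\mapsto P_g$.

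There is essentially no obstacle here; the only point requiring care is the identification of preimages of $x$ with those of $x'$ via their common initial symbol, which is what converts a distance $\theta^k$ between $x$ and $x'$ into a distance $\theta^{k+1}$ between paired preimages and thereby produces the correct factor of $\theta$ in the Lipschitz estimate.
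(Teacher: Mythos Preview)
Your approach is the same as the paper's, but you have omitted a case in the Lipschitz estimate and as a result your claimed bound $|P_gf|_\theta\le 2N\theta\,\|f\|_\theta\|g\|_\theta$ is false in general. Your pairing argument requires $\Delta(x,x')\ge 1$, i.e.\ $x_0=x'_0$; when $x_0\ne x'_0$ the sets of admissible predecessor symbols need not coincide, so there is no bijection between $S^{-1}x$ and $S^{-1}x'$ to exploit. A concrete counterexample: on the subshift of $\{0,1\}^{\mathbb Z^+}$ with allowed transitions $00,01,10$ (so $0$ has two preimages and $1$ has one), take $f=g\equiv 1$; then $P_gf$ equals $2$ on $[0]$ and $1$ on $[1]$, giving $|P_gf|_\theta=1$, while your bound reads $4\theta$, which fails for $\theta<1/4$.

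The repair is immediate and is exactly what the paper does: treat the case $x_0\ne x'_0$ separately. There $d_\theta(x,x')=1$ and the crude estimate $|P_gf(x)-P_gf(x')|\le 2\|P_gf\|_\infty\le 2N\|f\|_\infty\|g\|_\infty$ already bounds the difference quotient by $2N\|f\|_\theta\|g\|_\theta$. Combining this with your (correct) estimate in the case $x_0=x'_0$ yields $|P_gf|_\theta\le 2N\|f\|_\theta\|g\|_\theta$ and hence $\|P_g\|\le 2N\|g\|_\theta$. Apart from this missing case your argument is fine and matches the paper's.
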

\begin{proof}
$P$ is clearly linear. Let $g\in\mathcal{C}$. We want to bound $\|P_g
f\|_\theta$. We first estimate $\|P_gf\|_\infty$. Let $x\in\Sigma$. Then
$|P_gf(x)|\le \sum_{y\in\sigma^{-1}x}|f(y)|\cdot |g(y)|\le
N\|f\|_\infty\|g\|_\infty\le N\|f\|_\theta\|g\|_\theta$. This yields
\begin{equation}\label{eq:term1}
\|P_gf\|_\infty\le N\|f\|_\theta\|g\|_\theta.
\end{equation}

We now bound $|P_gf|_\theta$. Let $x\ne y\in\Sigma$. We need to estimate
$|P_gf(x)-P_gf(y)|/d_\theta(x,y)$. If $x_0\ne y_0$ then the denominator
is 1 and the numerator is at most $2N\|g\|_\infty\|f\|_\infty\le
2N\|g\|_\theta\|f\|_\theta$. If $x_0=y_0$ then
\begin{align*}
|P_gf(x)-P_gf(y)|&=\sum_{\{i\colon ix_0\text{
legal}\}}\big(g(ix)f(ix)-g(iy)f(iy)\big)\\
&\le N(\|g\|_\infty|f|_\theta+\|f\|_\infty|g|_\theta)
d_\theta(ix,iy)\\
&\le 2 N\|g\|_\theta\|f\|_\theta \theta d_\theta(x,y).
\end{align*}
Combined with the estimate in the case $x_0\ne y_0$, this shows
$|P_gf|_\theta\le 2N\|g\|_\theta\cdot\|f\|_\theta$ and so $\|P_g\|\le
2N\|g\|_\theta$.
\end{proof}

Baladi's book \cite{Baladi} contains a number of detailed calculations of
the spectral radii and essential spectral radii of Perron--Frobenius
operators acting on the Lipschitz spaces. We now develop some of these
arguments in the case of random compositions.

Suppose that $G\colon \Omega\mapsto \mathcal W_\theta;\;\omega\mapsto
g_\omega$ is a continuous mapping. Since $\Omega$ will be assumed to be
compact there will be a constant $\gamma$ such that $g_\omega(x)\ge
\gamma$ for all $x\in\Sigma$ and $\omega\in\Omega$. Similarly there will
be a constant such that $\|g_\omega\|_\theta\le C$ for all
$\omega\in\Omega$. We assume as usual that $\sigma\colon\Omega\to\Omega$
is ergodic. We write $P^{(n)}_\omega$ for the composition of
Perron-Frobenius operators $P_{g_{\sigma^{n-1}\omega}}\circ\cdots\circ
P_{g_\omega}$.

A linear map on $\mathcal C_\theta$ is said to be \emph{positive} if it
maps non-negative functions to non-negative functions. In particular if
$g\in \mathcal W_\theta$ then $P_g$ is positive.

\begin{lemma}
Let $\mathcal R=(\Omega,\mathcal F,\mathbb{P},\sigma,\mathcal
C_\theta,P)$ be a continuous ergodic random dynamical system of
Perron-Frobenius operators with random weights on a shift of finite type
$\Sigma$. Suppose that $\Omega$ is compact and $P\colon\Omega\to \mathcal
W_\theta$ is continuous. Let $R_n(\omega)=\|P^{(n)}_\omega\mathbf
1\|_\infty$. Then $R_n(\omega)^{1/n}$ converges $\mathbb{P}$-almost
everywhere to a constant $R^*$.
\end{lemma}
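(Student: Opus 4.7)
The plan is to apply Kingman's subadditive ergodic theorem to the sequence $(\log R_n)$. The central observation is that for $g\in \mathcal{W}_\theta$, the operator $P_g$ is positive in the sense that it preserves the pointwise order of non-negative functions. In particular, since $P^{(n)}_\omega \mathbf{1} \le R_n(\omega) \mathbf{1}$ pointwise by definition of $R_n(\omega)$, applying positivity of $P^{(m)}_{\sigma^n\omega}$ yields
\begin{equation*}
P^{(m+n)}_\omega \mathbf{1} \;=\; P^{(m)}_{\sigma^n\omega} \bigl(P^{(n)}_\omega \mathbf{1}\bigr) \;\le\; R_n(\omega)\,P^{(m)}_{\sigma^n\omega}\mathbf{1}
\end{equation*}
pointwise, and taking suprema gives the submultiplicativity $R_{m+n}(\omega) \le R_n(\omega)\,R_m(\sigma^n\omega)$. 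Equivalently, $\log R_n$ is a subadditive cocycle over $\sigma$.

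Next I would verify the integrability hypothesis. The uniform bound $\|g_\omega\|_\theta\le C$ on the compact space $\Omega$ gives $R_1(\omega) = \|P_\omega \mathbf{1}\|_\infty \le N\|g_\omega\|_\infty \le NC$, so $\log^+ R_1 \in L^\infty(\mathbb{P})$. For a lower bound, since every symbol has at least one legal predecessor, for each $x\in \Sigma$ the sum $P_\omega \mathbf{1}(x) = \sum_{y\in S^{-1}x} g_\omega(y) \ge \gamma$, hence $\log R_1(\omega) \ge \log\gamma$ pointwise. Thus $\log R_1$ is bounded and in particular integrable.

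Kingman's subadditive ergodic theorem then guarantees that $\tfrac{1}{n}\log R_n(\omega)$ converges $\mathbb{P}$-almost everywhere to an $\sigma$-invariant function, which by ergodicity of $\sigma$ is equal almost everywhere to a constant $\log R^*$. Exponentiating yields $R_n(\omega)^{1/n} \to R^*$ for $\mathbb{P}$-almost every $\omega$. No step is really an obstacle here — the only thing to be careful about is invoking positivity at the right moment, but this is immediate once one recalls that $g_\omega\in \mathcal{W}_\theta$ takes strictly positive values.
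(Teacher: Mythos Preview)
Your proof is correct and follows essentially the same route as the paper: positivity of the operators gives submultiplicativity of $R_n$, and Kingman's subadditive ergodic theorem together with ergodicity yields the almost-everywhere constant limit. You are in fact slightly more careful than the paper in explicitly verifying the integrability hypothesis for $\log R_1$.
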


\begin{proof}
Since the operators $P_{g}$ are positive ($g$ being positive), we have
$P^{(n+m)}_\omega \mathbf
1=P^{(n)}_{\sigma^m\omega}(P^{(m)}_\omega\mathbf 1)\le
P^{(n)}_{\sigma^m\omega} R_m(\omega)\mathbf 1\le
R_n(\sigma^m\omega)R_m(\omega)\mathbf 1$. It follows that $\log
R_n(\omega)$ is a subadditive sequence of functions so that by the
subadditive ergodic theorem, for $\mathbb{P}$-almost all $\omega$,
$R_n(\omega)^{1/n}$ converges to a quantity $R(\omega)$. Since this
quantity is $\sigma$-invariant, there is a constant $R^*$ such that
$R(\omega)=R^*$ for $\mathbb{P}$-almost every $\omega\in\Omega$.
\end{proof}

\begin{lemma}[Bounded Distortion]
Let $\mathcal R$ be as in the previous lemma. Let $g^{(n)}_\omega(x)$
denote the product $g_\omega(x)g_{\sigma(\omega)}(Sx)\ldots
g_{\sigma^{n-1}\omega}(S^{n-1}x)$. There exists a $D>0$ such that for all
$\omega\in\Omega$, if $x_0=y_0$ and $v$ is a word of an arbitrary length
$k$ such that $v_{k-1}x_0$ is a legal transition then
\begin{equation*}
  \left|1-\frac{g^{(k)}_\omega(vy)}{g^{(k)}_\omega(vx)}\right|
  \le Dd_\theta(x,y).
\end{equation*}
\end{lemma}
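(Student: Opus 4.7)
The plan is to take logarithms, estimate each factor separately using the Lipschitz bound on $g_\omega$ together with a contraction of $\theta$-distances on the preimage tree, and sum as a geometric series.

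The first step is the key metric observation: for every $j\in\{0,1,\ldots,k-1\}$, the points $S^j(vx)$ and $S^j(vy)$ agree in positions $0,\ldots,k-j-1$ (they read $v_j,v_{j+1},\ldots,v_{k-1}$), and they agree in position $k-j$ as well since $x_0=y_0$. Consequently $\Delta(S^j(vx),S^j(vy))\ge (k-j)+\Delta(x,y)$, and therefore
\begin{equation*}
d_\theta\bigl(S^j(vx),S^j(vy)\bigr)\le \theta^{k-j}\,d_\theta(x,y).
\end{equation*}

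Next, since $P\colon\Omega\to\mathcal W_\theta$ is continuous on the compact set $\Omega$, the constants $\gamma:=\inf_{\omega,x} g_\omega(x)>0$ and $C:=\sup_\omega\|g_\omega\|_\theta<\infty$ are finite. Applied to each factor this gives
\begin{equation*}
\bigl|g_{\sigma^j\omega}(S^j(vy))-g_{\sigma^j\omega}(S^j(vx))\bigr|\le C\,\theta^{k-j}\,d_\theta(x,y).
\end{equation*}
Dividing by $g_{\sigma^j\omega}(S^j(vx))\ge\gamma$ and using the elementary inequality $|\log(1+t)|\le 2|t|$ valid for $|t|\le 1/2$ (adjust the final constant to absorb the boundary case where $\theta^{k-j}d_\theta(x,y)$ is not small), we obtain
\begin{equation*}
\bigl|\log g_{\sigma^j\omega}(S^j(vy))-\log g_{\sigma^j\omega}(S^j(vx))\bigr|\le \tfrac{2C}{\gamma}\,\theta^{k-j}\,d_\theta(x,y).
\end{equation*}

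Summing over $j=0,1,\ldots,k-1$ converts the product $g^{(k)}_\omega$ into a telescoping logarithmic sum, and the right-hand side telescopes into a geometric series with ratio $\theta<1$:
\begin{equation*}
\left|\log\frac{g^{(k)}_\omega(vy)}{g^{(k)}_\omega(vx)}\right|\le \frac{2C}{\gamma}\,d_\theta(x,y)\sum_{j=0}^{k-1}\theta^{k-j}\le \frac{2C\theta}{\gamma(1-\theta)}\,d_\theta(x,y).
\end{equation*}
Since this bound is uniform in $k$ and in $\omega$, exponentiating and using $|1-e^s|\le e^{|s|}|s|$ with $|s|$ bounded by a fixed constant yields the required estimate with
\begin{equation*}
D=\frac{2C\theta}{\gamma(1-\theta)}\exp\!\left(\frac{2C\theta}{\gamma(1-\theta)}\right).
\end{equation*}

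There is no real obstacle; the only thing to watch is the uniformity. The combination of compactness of $\Omega$ and continuity of $\omega\mapsto g_\omega$ in $\mathcal W_\theta$ is precisely what provides uniform constants $\gamma$ and $C$, and the $\theta$-contraction of the preimage tree is what makes the geometric sum close uniformly in the depth $k$. Positivity of each $g_\omega$ (so that the ratio is well-defined and bounded away from 0 and $\infty$) is what licences passing to logarithms.
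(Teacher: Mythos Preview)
Your proof is correct and follows essentially the same route as the paper: the metric contraction $d_\theta(S^j(vx),S^j(vy))=\theta^{k-j}d_\theta(x,y)$, passage to logarithms, a geometric sum, and exponentiation at the end. The only cosmetic difference is in the log step: rather than invoking $|\log(1+t)|\le 2|t|$ for $|t|\le 1/2$ and then patching the boundary case, the paper notes directly that since each value $g_{\sigma^j\omega}(\cdot)$ lies in the fixed interval $[\gamma,\Gamma]$, there is a constant $K$ with $|\log(b/a)|\le K|b-a|$ for all $a,b\in[\gamma,\Gamma]$, which removes the need for any case distinction.
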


\begin{proof}
As mentioned above there is a $\gamma>0$ such that $g_\omega(x)\ge
\gamma$ for all $\omega\in\Omega$ and all $x\in\Sigma$. Similarly there
is a $\Gamma$ such that $g_\omega(x)\le\Gamma$ for all $\omega$ and $x$
and also a $C$ such that $\|g_\omega\|_\theta\le C$ for all
$\omega\in\Omega$. We make use of the fact that there exists a constant
$K$ such that if $\gamma<a,b<\Gamma$ then $|\log(b/a)|\le K|b-a|$.

We then have
\begin{align*}
  \left|\log\frac{g^{(k)}_\omega(vy)}{g^{(k)}_\omega(vx)}\right|&\le
  \sum_{j=0}^{k-1}\left|\log
  \frac{g_{\sigma^j\omega}(S^j(vy))}{g_{\sigma^j\omega}(S^j(vx))}\right|\\
  &\le K\sum_{j=0}^{k-1}Cd_\theta(S^j(vx),S^j(vy))\\
  &=CK\sum_{j=0}^{k-1}\theta^{k-j}d_\theta(x,y)\\
  &\le (CK/(1-\theta))d_\theta(x,y),
\end{align*}

Exponentiating we see $g^{(k)}_\omega(vy)/g^{(k)}_\omega(vx)$ lies
between  the values $\exp(-rd_\theta(x,y))$ and $\exp(rd_\theta(x,y))$ where
$r=CK/(1-\theta)$. Since $\exp$ is Lipschitz on $[-r,r]$ there exists a
$D$ such that $|\exp(t)-1|\le (D/r)|t|$ on $[-r,r]$. It follows that
\begin{equation*}
\left|\frac{g^{(k)}_\omega(vy)}{g^{(k)}_\omega(vx)}-1\right| \le
Dd_\theta(x,y)
\end{equation*}
as required.
\end{proof}

The next lemma appears as an exercise in the deterministic case in
Baladi's book \cite{Baladi}.
\begin{lemma}\label{lem:LY}
  Let $\mathcal R$ be as above. Then there exists a constant $K$
  such that for $f\in \mathcal C_\theta$
  \begin{equation*}
    |P^{(n)}_\omega f|_\theta\le R_n(\omega)(\theta^n|f|_\theta+
    K\|f\|_\infty).
  \end{equation*}
\end{lemma}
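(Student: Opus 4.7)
The plan is to fix $x, y \in \Sigma$ and estimate $|P^{(n)}_\omega f(x) - P^{(n)}_\omega f(y)|$ by splitting into the easy case $x_0 \ne y_0$ (where $d_\theta(x,y) = 1$ and a sup-norm bound suffices) and the essential case $x_0 = y_0$ (where the preimages under $S^n$ are in natural bijection via the length-$n$ prefix, allowing a telescoping argument controlled by the bounded distortion lemma).

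For $x_0 \ne y_0$, I would simply note that for any $z \in \Sigma$,
\begin{equation*}
|P^{(n)}_\omega f(z)| \;\le\; \|f\|_\infty \sum_{v} g^{(n)}_\omega(vz) \;=\; \|f\|_\infty (P^{(n)}_\omega \mathbf 1)(z) \;\le\; R_n(\omega)\|f\|_\infty,
\end{equation*}
so $|P^{(n)}_\omega f(x) - P^{(n)}_\omega f(y)|/d_\theta(x,y) \le 2R_n(\omega)\|f\|_\infty$.

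For $x_0 = y_0$, the legal length-$n$ prefixes $v$ for $x$ and for $y$ coincide, so I would write
\begin{equation*}
P^{(n)}_\omega f(x) - P^{(n)}_\omega f(y) = \sum_v \Bigl[ g^{(n)}_\omega(vx)\bigl(f(vx)-f(vy)\bigr) + f(vy)\bigl(g^{(n)}_\omega(vx)-g^{(n)}_\omega(vy)\bigr) \Bigr].
\end{equation*}
The first summand is handled by $|f(vx)-f(vy)| \le |f|_\theta d_\theta(vx,vy) = |f|_\theta \theta^n d_\theta(x,y)$, and summing against $g^{(n)}_\omega(vx)$ yields at most $R_n(\omega)\theta^n |f|_\theta d_\theta(x,y)$. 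The second summand is where the Bounded Distortion Lemma does the work: it gives
\begin{equation*}
|g^{(n)}_\omega(vx) - g^{(n)}_\omega(vy)| \;\le\; g^{(n)}_\omega(vx)\, D\, d_\theta(x,y),
\end{equation*}
so bounding $|f(vy)| \le \|f\|_\infty$ and summing produces at most $D\|f\|_\infty R_n(\omega) d_\theta(x,y)$.

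Combining the two cases, the inequality holds with $K = \max(D, 2)$. There is no real obstacle here once one has the bounded distortion estimate already established; the only subtlety is remembering to dispose of the $x_0 \ne y_0$ case separately (where the prefix-bijection argument fails) via the crude sup-norm bound, and to phrase the distortion step so that the factor $g^{(n)}_\omega(vx)$ survives to be summed into $P^{(n)}_\omega \mathbf 1(x) \le R_n(\omega)$, rather than being absorbed into a constant depending on $n$.
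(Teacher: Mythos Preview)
Your proof is correct and follows essentially the same route as the paper's: the same case split on $x_0 \ne y_0$ versus $x_0 = y_0$, the same telescoping decomposition in the latter case, and the same use of the Bounded Distortion Lemma to control the weight-difference term so that it sums against $g^{(n)}_\omega(vx)$ into $P^{(n)}_\omega \mathbf 1(x) \le R_n(\omega)$. The only cosmetic difference is that the paper records the first-case bound as $R_n(\omega)\|f\|_\infty$ (a minor slip; your $2R_n(\omega)\|f\|_\infty$ is the correct triangle-inequality estimate), leading to $K=\max(D,1)$ there versus your $K=\max(D,2)$.
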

\begin{proof}
  We need to estimate $\sup_{x\ne y}|P^{(n)}_\omega f(x)-P^{(n)}_\omega
  f(y)|/d_\theta(x,y)$. If $x$ and $y$ differ in the zeroth
  coordinate, the denominator is 1 and we bound the numerator above by
  $R_n(\omega)\|f\|_\infty$ giving a bound of the given form (with
  $K$=1).

If $x$ and $y$ agree in the zeroth coordinate then we estimate as
follows. We let $W_n$ be the set of words $v$ of length $n$ such that
$v_{n-1}x_0$ is legal.

  \begin{align*}
    &|P^{(n)}_\omega f(x)-P^{(n)}_\omega f(y)|\\
    &=
    \left|\sum_{v\in W_n}\big(g^{(n)}_\omega(vx)f(vx)-
    g^{(n)}_\omega(vy)f(vy)\big)\right|\\
    &\le \sum_{v\in W_n}g^{(n)}_\omega(vx)\cdot|f(vx)-f(vy)|+
    \sum_{v\in W_n}|f(vy)|\cdot|g^{(n)}_\omega(vx)-g^{(n)}_\omega(vy)|\\
    &\le \sum_{v\in W_n}g^{(n)}_\omega(vx)\left(|f|_\theta d_\theta(vx,vy)+
    \|f\|_\infty\left|1-\frac{g^{(n)}_\omega(vy)}{g^{(n)}_\omega(vx)}\right|\right)\\
    &\le R_n(\omega)\left(|f|_\theta\theta^nd_\theta(x,y)+\|f\|_\infty D
    d_\theta(x,y)
    \right).
  \end{align*}
  We therefore see that $|P^{(n)}_\omega
  f|_\theta\le
  R_n(\omega)\left(\theta^n|f|_\theta+D\|f\|_\infty\right)$ as required.
\end{proof}

Let $n>0$ and let $[w_1],\ldots,[w_k]$ be an enumeration of the
$n$-cylinders. For each $1\le j\le k$, let $x_j$ be a point of $[w_j]$.
Given these choices, define a finite rank operator $\Pi_n\colon \mathcal
C_\theta\to\mathcal C_\theta$ by
\begin{equation*}
  (\Pi_n f)(x)=f(x_j)\text{ for $x\in [w_j]$}.
\end{equation*}

\begin{lemma}\label{lem:proj}
  For $f\in\mathcal C_\theta$ and $\Pi_n$ as above we have
  \begin{align*}
    &\|(I-\Pi_n) f\|_\infty\le \theta^n |f|_\theta\\
    &|(I-\Pi_n) f|_\theta\le \max(2\theta,1)|f|_\theta.
  \end{align*}
\end{lemma}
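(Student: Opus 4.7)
My plan is to split the argument into the $L^\infty$-bound and the Lipschitz-seminorm bound, handling each by the obvious case analysis on whether the two evaluation points lie in the same $n$-cylinder or in different ones.

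For the first inequality, fix $x \in \Sigma$ and let $[w_j]$ be the $n$-cylinder containing $x$, with chosen base point $x_j \in [w_j]$. Since $x$ and $x_j$ agree in coordinates $0,1,\ldots,n-1$, one has $d_\theta(x,x_j) \le \theta^n$, so
\[
|(I-\Pi_n)f(x)| = |f(x) - f(x_j)| \le |f|_\theta\, d_\theta(x,x_j) \le \theta^n |f|_\theta.
\]
Taking the supremum over $x$ gives the first estimate.

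For the second inequality, take $x \ne y$ in $\Sigma$. If $x$ and $y$ lie in the same $n$-cylinder $[w_j]$, then $(\Pi_n f)(x) = (\Pi_n f)(y) = f(x_j)$, so
\[
|(I-\Pi_n)f(x) - (I-\Pi_n)f(y)| = |f(x) - f(y)| \le |f|_\theta\, d_\theta(x,y),
\]
which yields the ratio bound $|f|_\theta$ in this case. If instead $x \in [w_j]$ and $y \in [w_l]$ with $j \ne l$, then $\Delta(x,y) < n$, so $d_\theta(x,y) \ge \theta^{n-1}$; applying the first inequality pointwise to $x$ and $y$ separately gives
\[
|(I-\Pi_n)f(x) - (I-\Pi_n)f(y)| \le |(I-\Pi_n)f(x)| + |(I-\Pi_n)f(y)| \le 2\theta^n |f|_\theta,
\]
and dividing by $d_\theta(x,y) \ge \theta^{n-1}$ yields the ratio bound $2\theta|f|_\theta$.

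Combining the two cases, $|(I-\Pi_n)f|_\theta \le \max(1, 2\theta)\, |f|_\theta$, as claimed. There is no real obstacle here — the only mild subtlety is remembering that when $x$ and $y$ lie in distinct $n$-cylinders the denominator $d_\theta(x,y)$ is at least $\theta^{n-1}$ (not $\theta^n$), which is exactly what produces the extra factor of $\theta^{-1}$ and hence the $2\theta$ in the final bound.
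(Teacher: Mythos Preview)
Your proof is correct and follows essentially the same approach as the paper: the same case analysis on whether $x$ and $y$ lie in the same $n$-cylinder, the same use of $d_\theta(x,x_j)\le\theta^n$ for the $L^\infty$ bound, and the same triangle-inequality estimate yielding the factor $2\theta$ in the different-cylinder case. The only cosmetic difference is that the paper writes out $|f(x)-f(x_i)|+|f(y)-f(x_j)|$ explicitly rather than citing the first inequality, but the content is identical.
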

\begin{proof}
Let $Q\colon \mathcal C_\theta\to\mathcal C_\theta$ denote $I-\Pi_n$. Let
$x\in [w_j]$. Then $Qf(x)=f(x)-f(x_j)$. Since $\Delta(x,x_j)\ge n$, we
have $|Qf(x)|\le |f|_\theta\theta^n$.

Now let $x,y\in\Sigma$. If they lie in the same $n$-cylinder set then
$|Qf(x)-Qf(y)|=|f(x)-f(y)|\le |f|_\theta d_\theta(x,y)$. On the other
hand if $x$ and $y$ lie in distinct $n$-cylinders, $[w_i]$ and $[w_j]$
respectively then we have
\begin{align*}
  &|Qf(x)-Qf(y)|=|(f(x)-f(x_i))-(f(y)-f(x_j))|\\
  &\le |f(x)-f(x_i)|+|f(y)-f(x_j)|.
\end{align*}
Since $\Delta(x,x_i)$ and $\Delta(y,x_j)$ are each at least $n$ the right
side is bounded above by $2|f|_\theta\theta^n\le (2\theta)|f|_\theta
d_\theta(x,y)$.
\end{proof}

\begin{theorem}\label{thm:tospec}
  Let $\mathcal R$ be as above. Then there are $C_1>0$ and $C_2>0$ such that
  \begin{align*}
    R_n(\omega)&\le \|P^{(n)}_\omega\|\le C_1R_n(\omega)\\
    (1/4)\theta^n R_n(\omega)&\le \|P^{(n)}_\omega\|_\text{ic}\le
    C_2\theta^n R_n(\omega).
  \end{align*}
In particular $\lambda(\omega)=R^*$ and $\kappa(\omega)=\theta R^*$ for
$\mathbb{P}$-almost every $\omega$ so that the random dynamical system is
quasi-compact.
\end{theorem}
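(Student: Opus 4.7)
The plan is to establish the four displayed norm inequalities in the statement in turn, then read off the identifications $\lambda(\omega)=R^*$ and $\kappa(\omega)=\theta R^*$ by taking $n$-th roots and invoking $R_n(\omega)^{1/n}\to R^*$ from the previous lemma. For the bounds on $\|P^{(n)}_\omega\|$: the lower bound $R_n(\omega)\le\|P^{(n)}_\omega\|$ is immediate from $\mathbf 1\in B_{\mathcal C_\theta}$ together with $\|P^{(n)}_\omega\mathbf 1\|_\theta\ge\|P^{(n)}_\omega\mathbf 1\|_\infty=R_n(\omega)$. For the upper bound, positivity of the operators $P_g$ (and hence of $P^{(n)}_\omega$) gives $|P^{(n)}_\omega f|\le\|f\|_\infty\cdot P^{(n)}_\omega\mathbf 1$ pointwise, so $\|P^{(n)}_\omega f\|_\infty\le R_n(\omega)\|f\|_\theta$; combining this with Lemma \ref{lem:LY} to bound the Lipschitz part produces $\|P^{(n)}_\omega f\|_\theta\le (1+K)R_n(\omega)\|f\|_\theta$, so $C_1=1+K$ works.

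For the upper bound on the index of compactness I would use Lemma \ref{lem:morris} to reduce to bounding $\|P^{(n)}_\omega\|_{\text{fr}}$, with the natural candidate $F:=P^{(n)}_\omega\Pi_n$, where $\Pi_n$ is the finite-rank projection onto functions constant on $n$-cylinders supplied by Lemma \ref{lem:proj}. Then $P^{(n)}_\omega - F = P^{(n)}_\omega\circ(I-\Pi_n)$, and the two bounds $\|(I-\Pi_n)f\|_\infty\le\theta^n|f|_\theta$ and $|(I-\Pi_n)f|_\theta\le\max(2\theta,1)|f|_\theta$ from Lemma \ref{lem:proj}, fed into positivity (for $\|\cdot\|_\infty$) and Lemma \ref{lem:LY} (for $|\cdot|_\theta$), yield $\|P^{(n)}_\omega(I-\Pi_n)f\|_\theta\le C_2\theta^n R_n(\omega)\|f\|_\theta$ for an explicit $C_2$.

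The main obstacle is the lower bound on the index of compactness. The aim is to produce an infinite family $\{F_k\}_{k\ge 1}\subset B_{\mathcal C_\theta}$ whose images under $P^{(n)}_\omega$ are pairwise $\|\cdot\|_\theta$-separated by more than $(1/2)\theta^n R_n(\omega)$, so that no finite cover of $P^{(n)}_\omega B_{\mathcal C_\theta}$ by balls of radius below $(1/4)\theta^n R_n(\omega)$ can absorb more than one image. Fix $x^*\in\Sigma$ with $P^{(n)}_\omega\mathbf 1(x^*)\ge(1-\epsilon)R_n(\omega)$, set $C_k:=[x^*_0\ldots x^*_k]$, and define $F_k:=\theta^{n+k}\mathbf 1_{S^{-n}C_k}$. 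A direct ultrametric computation using the description of cylinder-indicator $\theta$-seminorms confirms $\|F_k\|_\theta\le 1$ and $P^{(n)}_\omega F_k=\theta^{n+k}\mathbf 1_{C_k}\cdot P^{(n)}_\omega\mathbf 1$. Comparing values at $x^*\in C_k$ with values at a point just outside $C_k$ at $\theta$-distance $\theta^k$ shows $|P^{(n)}_\omega(F_k - F_{k'})|_\theta\ge\theta^n(1-\theta^{|k-k'|})(1-\epsilon)R_n(\omega)$ whenever $k\ne k'$; restricting to a sufficiently sparse subsequence of the $k$'s (say $k_{j+1}-k_j\ge N$ with $\theta^N<1/2$) forces $(1-\theta^{|k-k'|})(1-\epsilon)>1/2$ uniformly, yielding the required infinite separated family. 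The delicate point here is that the factor of $\theta^n$, not $\theta^{n+k}$, in the separation is genuine: it arises only because the distance $\theta^k$ across the boundary of $C_k$ exactly cancels $k$ copies of the contraction coming from the $\theta^{n+k}$ scaling of $F_k$.

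Finally, taking $n$-th roots in the four inequalities and using $R_n(\omega)^{1/n}\to R^*$ yields $\|P^{(n)}_\omega\|^{1/n}\to R^*$ and $\|P^{(n)}_\omega\|_{\text{ic}}^{1/n}\to\theta R^*$, whence (in the multiplicative convention of the statement) $\lambda(\omega)=R^*$ and $\kappa(\omega)=\theta R^*$ almost everywhere. Since $\theta<1$ gives $\kappa(\omega)<\lambda(\omega)$, the random dynamical system is quasi-compact.
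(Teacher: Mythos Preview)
Your argument follows the paper's almost exactly: the bounds on $\|P^{(n)}_\omega\|$, the use of $\Pi_n$ and Lemma~\ref{lem:morris} for the upper bound on $\|P^{(n)}_\omega\|_{\text{ic}}$, and the test functions $F_k=\theta^{n+k}\mathbf 1_{S^{-n}C_k}$ together with the identity $P^{(n)}_\omega F_k=\theta^{n+k}\mathbf 1_{C_k}\cdot P^{(n)}_\omega\mathbf 1$ for the lower bound are precisely the ingredients the paper uses.

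There is one genuine (if small) gap in your lower-bound argument. You need, for infinitely many $k$, a point ``just outside $C_k$ at $\theta$-distance $\theta^k$'' from $x^*$; that is, you need the cylinder $[x^*_0\ldots x^*_{k-1}]$ to properly contain $C_k$. Nothing in your choice of $x^*$ guarantees this for any particular $k$, let alone for a sparse infinite subsequence. The paper handles this by first passing from the single point where $P^{(n)}_\omega\mathbf 1$ is large to an open set $U$ on which $P^{(n)}_\omega\mathbf 1>R_n(\omega)/2$, then choosing a cylinder $C_{k_0}\subset U$, and finally using irreducibility of $\Sigma$ to produce an infinite sequence $k_0<k_1<\cdots$ with each $C_{k_i}$ a \emph{proper} subset of $C_{k_i-1}$. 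This simultaneously supplies the branching points and keeps the evaluation points inside $U$, which in turn lets the paper avoid your loss factor $(1-\theta^{|k-k'|})(1-\epsilon)$ by comparing at $x\in C_{k_j-1}\setminus C_{k_j}$ and $y\in C_{k_i-1}\setminus C_{k_i}$ rather than at $x^*$. (Incidentally, your stated condition $\theta^N<1/2$ does not quite force $(1-\theta^{|k-k'|})(1-\epsilon)>1/2$; you would need something like $\theta^N<\epsilon<1/4$.) None of this is fatal---your approach works once you insert the branching argument---but as written the existence of the comparison point is asserted rather than justified.
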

\begin{proof}
  Let $K$ be as in Lemma \ref{lem:LY}.
  Let $f\in \mathcal C_\theta$. We have $|P^{(n)}_\omega f|_\theta \le
  (K+1)R_n(\omega)\|f\|_\theta$. Also $P^{(n)}_\omega f\le P^{(n)}_\omega
  (\|f\|_\infty\mathbf 1)\le R_n(\omega)\|f\|_\infty$. Combining these we
  see that $\|P^{(n)}_\omega f\|_\theta\le (K+1)R_n(\omega)\|f\|_\theta$.

  On the other hand we have $\|P^{(n)}_\omega\mathbf 1\|_\theta\ge
  R_n(\omega)$ while $\|\mathbf 1\|_\theta=1$ so the bounds on
  $\|P^{(n)}_\omega\|$ are established.

For the upper bound on $\|P^{(n)}_\omega\|_\text{ic}$ we use Lemma
\ref{lem:morris} to compare with $\|P^{(n)}_\omega\|_\text{fr}$ and we
let $\Pi_n$ be as above and give bounds on $\|P^{(n)}_\omega\circ
(I-\Pi_n)\|$. Let $f\in \mathcal C_\theta$. We have
$\|P^{(n)}_\omega\circ (I-\Pi_n)f\|_\infty\le
R_n(\omega)\|(I-\Pi_n)f\|_\infty \le \theta^nR_n(\omega)|f|_\theta$ where
the last inequality made use of Lemma \ref{lem:proj}. Using Lemmas
\ref{lem:LY} and \ref{lem:proj} we see
$|P^{(n)}_\omega((I-\Pi_n)f)|_\theta\le
R_n(\omega)(\theta^n\max(2\theta,1)|f|_\theta+K\theta^n\|f\|_\infty)$.
Combining these two inequalities leads to an upper bound of the desired
form for $\|P^{(n)}_\omega\|_\text{ic}$.

For the lower bound on $\|P^{(n)}_\omega\|_\text{ic}$ there exists (by
continuity) an open set $U$ on which $P^{(n)}_\omega\mathbf
1(x)>R_n(\omega)/2$. We show that the index of compactness is large by
exhibiting an infinite collection of points in the unit sphere of
$\mathcal C_\theta$ whose images under $P^{(n)}_\omega$ are uniformly
separated.

Let $u\in U$ and let $C_k$ be the $k$-cylinder about $u$. Since $U$ is
open there exists a $k_0$ such that $C_{k_0}\subset U$. Since $\Sigma$ is
an irreducible shift of finite type there exists an infinite sequence
$k_0<k_1<k_2<\ldots$ such that $C_{k_i}$ is a proper subset of
$C_{k_i-1}$ for all $i\ge 1$. We let $f_i=\theta^{k_i+n-1}\mathbf
1_{C_{k_i}}\circ S^n$. To check that $\|f_i\|_\theta=1$ we note that if
$x$ and $y$ agree on at least the first $k_i+n$ symbols then
$f_i(x)=f_i(y)$. Since the numerator of
$|f_i(x)-f_i(y)|/\theta^{\Delta(x,y)}$ takes only the values $0$ and
$\theta^{k_i+n-1}$ the maximum in this expression is obtained by taking
$x$ and $y$ that agree for as many symbols as possible, but for which
$f_i(x)\ne f_i(y)$. By the assumption on $\Sigma$ and choice of $k_i$
there are points agreeing for $k_i+n-1$ symbols but disagreeing on the
$k_i+n-1$st symbol for which $f_i(x)\ne f_i(y)$ so that
$\|f_i\|_\theta=1$ as required.

We then calculate
\begin{align*}
P^{(n)}_\omega f_i(x)&=\sum_{\{w_0^{n-1}\colon w_{n-1}x_0\text{ is
legal}\}}g^{(n)}_\omega(wx)\theta^{k_i+n-1}\mathbf 1_{C_{k_i}}\circ
S^n(wx)\\
&=\theta^{k_i+n-1}\mathbf 1_{C_{k_i}}(x)P^{(n)}_\omega \mathbf 1(x).
\end{align*}

Letting $h=P^{(n)}_\omega f_i-P^{(n)}_\omega f_j$, we have
$h=(\theta^{k_i+n-1}\mathbf 1_{C_{k_i}}-\theta^{k_j+n-1}\mathbf
1_{C_{k_j}})P^{(n)}_\omega\mathbf 1$. Let $i<j$ and pick $x\in
C_{k_j-1}\backslash C_{k_j}$ and $y\in C_{k_i-1}\backslash
C_{k_i}$. Then we have $\Delta(x,y)=k_i-1$,
$h(x)=\theta^{k_i+n-1}P^{(n)}_\omega \mathbf 1(x)\ge
(1/2)\theta^{k_i+n-1}R_n(\omega)$ whereas $h(y)=0$ giving
$\|h\|_\theta\ge (1/2)\theta^n R_n(\omega)$. It follows that no ball
of radius less than $(1/4)\theta^n R_n(\omega)$ can contain two
$P^{(n)}_\omega f_i$'s and so $\|P^{(n)}_\omega\|_\text{ic}\ge
(1/4)\theta^n R_n(\omega)$.

\end{proof}

\begin{example}
Let $\sigma\colon \Omega\to\Omega$ be any homeomorphic dynamical system
defined on a compact space $\Omega$ preserving an ergodic probability
measure $\mathbb P$. Let $\Sigma=\{0,1\}^{\mathbb Z^+}$. Fix $0<\theta<1$
and let $\{h_\omega:\omega\in\Omega\}$ be a continuously-parameterized
family of antisymmetric monotonic elements of $\mathcal
C_\theta(\Sigma)$, where a function is \emph{antisymmetic} if it
satisfies $h(\bar x)=-h(x)$ for $x\in\Sigma$, where $\bar x_i=1-x_i$. A
function will be called \emph{monotonic} if it satisfies $h(x)\le h(y)$
whenever $x\preceq y$, where $x\preceq y$ means $x_i\le y_i$ for each
$i$.

We will assume that $\|h_\omega\|_\infty<a<1/2$ for all
$\omega\in\Omega$. We then define a continuously parameterized family of
elements $g_\omega$ of $\mathcal W_\theta$ by
\begin{align*}
  g_\omega(1x)&=\textstyle{\frac12}+h_\omega(x)\\
  g_\omega(0x)&=\textstyle{\frac12}-h_\omega(x).
\end{align*}
From the choice of $g_\omega$, we see that $P_\omega\mathbf 1=\mathbf 1$
for all $\omega$ so that $R^*=1$ and $\lambda(\omega)=1$ for a.e.
$\omega$ and hence $\kappa(\omega)=\theta$ for a.e. $\omega$ by
Theorem \ref{thm:tospec}. 

One can verify that the $P_\omega$ map antisymmetric functions to
antisymmetric functions and monotonic functions to monotonic functions.
Following Liverani \cite{Liverani} we define a cone $\mathcal
K_a=\{f\colon f(x)>0,\ \forall x; f(x)/f(y)\le e^{ad_\theta(x,y)},\
\forall x,y\}$. For suitably large $a$, there is $a'<a$ such that
$P_\omega(\mathcal K_a)\subset \mathcal K_{a'}$. Since $\mathbf 1$ is a
fixed point the theory of cones guarantees that if $f$ is a positive
function in $\mathcal C_\theta$, then $P^{(n)}_\omega f$ converges at an
exponential rate to a constant uniformly in $\omega$. In particular an
antisymmetric function $f$ can be written as the difference of two
positive functions: $f_1-f_2$. Since $P^{(n)}_\omega f_1$ converges
exponentially fast to a constant $C_1(\omega)$ and $P^{(n)}_\omega f_2$
converges exponentially to a constant $C_2(\omega)$, the fact that
$P^{(n)}_\omega f$ remains antisymmetric implies that
$C_1(\omega)=C_2(\omega)$. It follows that $P^{(n)}_\omega f$ converges
at an exponential rate to 0 uniformly over $\omega\in\Omega$.

Choosing $f(x)=\mathbf 1_{[1]}-\mathbf 1_{[0]}$, $f$ is both monotone and
antisymmetric. It follows that $P^{(n)}_\omega f$ decays exponentially.
We are able to give a lower bound on the decay rate that guarantees the
presence of non-trivial exceptional spectrum. Specifically, using the fact that
$g_\omega(0x)+g_\omega(1x)=1$, we have
\begin{align*}
&P_\omega f(1111\ldots) =g_\omega(1111\ldots)f(1111\ldots)+
g_\omega(0111\ldots)f(0111\ldots)\\
&=
g_\omega(1111\ldots)f(1111\ldots)+(1-g_\omega(1111\ldots))f(0000\ldots)\\
&=(2g_\omega(1111\ldots)-1)f(1111\ldots).
\end{align*}

If the $h_\omega$ are chosen in such a way that $g_\omega(1111\ldots)$
is uniformly close to $1$ as $\omega$ varies then we will ensure that
there is non-trivial exceptional spectrum.

\end{example}

\section*{Acknowledgements}
GF and SL acknowledge support by the Australian Research
Council Discovery Project DP0770289. 
AQ acknowledges partial support from the Natural
Sciences and Engineering Research Council of Canada and support while visiting the University of New South Wales from the Australian Research Council Centre of Excellence for Mathematics and Statistics of Complex Systems (MASCOS).

\bibliographystyle{plain}
\bibliography{FLQ2}
\vspace{5mm}

\end{document}